\definecolor{ufogreen}{rgb}{0.24, 0.82, 0.44}
\begin{document}


\newtheorem{theorem}{Théorème}[section]
\newtheorem{theore}{Théorème}
\newtheorem{definition}[theorem]{Définition}
\newtheorem{proposition}[theorem]{Proposition}
\newtheorem{corollary}[theorem]{Corollaire}
\newtheorem*{con}{Conjecture}
\newtheorem*{remark}{Remarque}
\newtheorem*{remarks}{Remarques}
\newtheorem*{pro}{Problème}
\newtheorem*{examples}{Exemples}
\newtheorem*{example}{Exemple}
\newtheorem{lemma}[theorem]{Lemme}


\title{Entiers monomialement irréductibles}

\author{Flavien Mabilat}

\date{}

\keywords{modular group; monomial solution; irreducibility}

\address{
Flavien Mabilat,
Laboratoire de Mathématiques de Reims,
UMR9008 CNRS et Université de Reims Champagne-Ardenne, 
U.F.R. Sciences Exactes et Naturelles 
Moulin de la Housse - BP 1039 
51687 Reims cedex 2,
France
}
\email{flavien.mabilat@univ-reims.fr}

\maketitle

\selectlanguage{french}
\begin{abstract}
Dans cette article on va s'intéresser à la combinatoire des sous-groupes de congruence du groupe modulaire. On va se consacrer ici à la notion de solutions monomiales minimales. Celles-ci sont les solutions d'une équation matricielle (apparaissant également lors de l'étude des frises de Coxeter), modulo un entier $N$, dont toutes les composantes sont identiques et minimales pour cette propriété. L'objectif ici est d'étudier les entiers $N$ pour lesquels toutes ces solutions possèdent une certaine propriété d'irréductibilité. 
\\
\end{abstract}

\selectlanguage{english}
\begin{abstract}
In this article we study the combinatorics of congruence subgroups of the modular group. We consider the notion of minimal monomial solutions. These are the solutions of a matrix equation (also appearing in the study of Coxeter friezes), modulo an integer $N$, whose components are identical and minimal for this property. Our objective is to study the integers $N$ for which all these solutions have a certain irreducibility property.

\end{abstract}

\selectlanguage{french}

\thispagestyle{empty}

\noindent {\bf Mots clés:} groupe modulaire; solution monomiale; irréductibilité 
\\
\begin{flushright}
\og \textit{La connaissance est une navigation dans un océan d'incertitudes à travers des archipels de certitudes.} \fg
\\Edgar Morin, \textit{Les sept savoirs nécessaires à l'éducation du futur.}
\end{flushright}

\section{Introduction}

Dans de nombreux domaines mathématiques, on est amené à considérer des matrices de la forme suivante : \[M_{n}(a_{1},\ldots,a_{n})=\begin{pmatrix}
   a_{n} & -1 \\[4pt]
    1    & 0 
   \end{pmatrix}
\begin{pmatrix}
   a_{n-1} & -1 \\[4pt]
    1    & 0 
   \end{pmatrix}
   \cdots
   \begin{pmatrix}
   a_{1} & -1 \\[4pt]
    1    & 0 
    \end{pmatrix}.\]
En effet, celles-ci apparaissent dans l'étude d'un nombre importants d'objets comme les fractions continues négatives (appelées également fractions continues de Hirzebruch-Jung), les frises de Coxeter, l'équation de Sturm-Liouville discrète (voir \cite{O} section 1.3) et bien d'autres encore. Un de ces objets, particulièrement intéressant, est le groupe modulaire \[SL_{2}(\mathbb{Z})=
\left\{
\begin{pmatrix}
a & b \\
c & d
   \end{pmatrix}
 \;\vert\;a,b,c,d \in \mathbb{Z},\;
 ad-bc=1
\right\}.\] En effet, considérons la partie génératrice formée des deux éléments suivants
 \[T=\begin{pmatrix}
 1 & 1 \\[2pt]
    0    & 1 
   \end{pmatrix}, S=\begin{pmatrix}
   0 & -1 \\[2pt]
    1    & 0 
   \end{pmatrix}.
 \] 
\\On peut montrer (voir par exemple l'introduction de \cite{Ma3}) que pour toute matrice $A$ de $SL_{2}(\mathbb{Z})$ il existe un entier strictement positif $n$ et des entiers strictement positifs $a_{1},\ldots,a_{n}$ tels que \[A=T^{a_{n}}ST^{a_{n-1}}S\cdots T^{a_{1}}S=\begin{pmatrix}
   a_{n} & -1 \\[4pt]
    1    & 0 
   \end{pmatrix}
\begin{pmatrix}
   a_{n-1} & -1 \\[4pt]
    1    & 0 
   \end{pmatrix}
   \cdots
   \begin{pmatrix}
   a_{1} & -1 \\[4pt]
    1    & 0 
    \end{pmatrix}=M_{n}(a_{1},\ldots,a_{n}).\]
	
Malheureusement, l'écriture d'un élément de $SL_{2}(\mathbb{Z})$ sous cette forme n'est pas unique (pour une façon d'assurer l'unicité d'une écriture de cette forme on peut consulter \cite{MO} section 6).
\\
\\ \indent Ceci amène naturellement à chercher les différentes écritures d'une matrice, ou d'un ensemble de matrices, du groupe modulaire. On s'intéresse particulièrement au cas de la matrice $Id$. Pour cela, on considère l'équation suivante: \begin{equation}
\label{a}
M_{n}(a_1,\ldots,a_n)=\pm Id.
\end{equation} 

\noindent V.Ovsienko (voir \cite{O} Théorèmes 1 et 2) a entièrement résolu celle-ci sur $\mathbb{N}^{*}$ et donné une description combinatoire des solutions en terme de découpages de polygones. Notons qu'on connaît également une formule donnant le nombre de solutions pour un $n$ fixé (voir \cite{CO2} Théorème 2.2). On dispose aussi des solutions de cette équation sur $\mathbb{N}$ (voir \cite{C} Théorème 3.1), sur $\mathbb{Z}$ (voir \cite{C} Théorème 3.2), et d'une description combinatoire de celles-ci. On peut également la résoudre sur $\mathbb{Z}[\alpha]$ avec $\alpha$ un nombre complexe transcendant (voir \cite{Ma2} Théorème 2.7). La résolution de cette équation sur d'autres ensembles, en particulier des sous-ensembles de $\mathbb{C}$, est encore un problème ouvert (voir \cite{C} problème ouvert 4.1). Notons que les solutions de l'équation $M_{n}(a_{1},\ldots,a_{n})=-Id$ permettent de construire des frises de Coxeter, et, qu'à partir d'une telle frise, on peut obtenir une solution de cette équation (voir \cite{BR}).
\\
\\ \indent On va s'intéresser ici aux cas des anneaux $\mathbb{Z}/N\mathbb{Z}$, c'est-à-dire à l'étude sur $\mathbb{Z}/N\mathbb{Z}$ de l'équation :
\begin{equation}
\label{p}
\tag{$E_{N}$}
M_{n}(a_1,\ldots,a_n)=\pm Id.
\end{equation} On dira, en particulier, qu'une solution de \eqref{p} est de taille $n$ si cette solution est un $n$-uplet d'éléments de $\mathbb{Z}/N\mathbb{Z}$. L'objectif principal de cette étude est de connaître toutes les écritures des éléments des sous-groupes de congruence suivants: \[\hat{\Gamma}(N)=\{A \in SL_{2}(\mathbb{Z})~{\rm tel~que}~A= \pm Id~( {\rm mod}~N)\}\] sous la forme $M_{n}(a_1,\ldots,a_n)$ avec les $a_{i}$ des entiers strictement positifs. 
\\
\\ \indent L'équation \eqref{p} a déjà été étudiée lors de précédents travaux (voir \cite{Ma1, M, M2}). L'élément clef des différents résultats obtenus est l'utilisation d'une notion de solutions irréductibles à partir desquelles on peut construire l'ensemble des solutions (voir section suivante). Grâce à elle, on a pu résoudre complètement \eqref{p} pour $N \leq 6$ (voir \cite{M} section 4). Celle-ci nous a également permis d'obtenir plusieurs résultats généraux d'irréductibilité. 
\\
\\ \indent La majeure partie de ces résultats concernent les solutions monomiales minimales qui sont les solutions de \eqref{p} dont toutes les composantes sont identiques et minimales pour cette propriété (voir \cite{M} section 3.3 et la section suivante). En particulier, on a montré que, pour certaines valeurs de $N$, toutes les solutions monomiales minimales non nulles de \eqref{p} sont irréductibles (voir \cite{M} sections 3.3 et 4 et la section suivante). On cherche ici à étudier les entiers $N$ pour lesquels cette propriété est vraie. Pour cela, on va donner, dans la partie suivante, les définitions qui nous seront utiles pour la suite ainsi qu'un résultat de classification de ces entiers, avant de démontrer celui-ci dans la section \ref{DT}.

\section{Définitions et résultat principal}\label{RP}    

L'objectif de cette section est de fournir les définitions essentielles à l'étude de l'équation \eqref{p}, introduites précédemment dans \cite{C} et \cite{M}, et d'énoncer le résultat principal de ce texte. Sauf mention contraire, $N$ désigne un entier naturel supérieur à $2$, et, s'il n'y a pas d'ambiguïté sur $N$, on note $\overline{a}=a+N\mathbb{Z}$ (avec $a \in \mathbb{Z}$). $\mathbb{P}$ représente l'ensemble des nombres premiers.

\begin{definition}[\cite{C}, lemme 2.7]
\label{21}

Soient $(\overline{a_{1}},\ldots,\overline{a_{n}}) \in (\mathbb{Z}/N \mathbb{Z})^{n}$ et $(\overline{b_{1}},\ldots,\overline{b_{m}}) \in (\mathbb{Z}/N \mathbb{Z})^{m}$. On définit l'opération ci-dessous: \[(\overline{a_{1}},\ldots,\overline{a_{n}}) \oplus (\overline{b_{1}},\ldots,\overline{b_{m}})= (\overline{a_{1}+b_{m}},\overline{a_{2}},\ldots,\overline{a_{n-1}},\overline{a_{n}+b_{1}},\overline{b_{2}},\ldots,\overline{b_{m-1}}).\] Le $(n+m-2)$-uplet obtenu est appelé la somme de $(\overline{a_{1}},\ldots,\overline{a_{n}})$ avec $(\overline{b_{1}},\ldots,\overline{b_{m}})$.

\end{definition}

\begin{examples}

{\rm On donne ci-dessous quelques exemples de sommes :
\begin{itemize}
\item $(\overline{3},\overline{2},\overline{1}) \oplus (\overline{5},\overline{0},\overline{1},\overline{2})= (\overline{5},\overline{2},\overline{6},\overline{0},\overline{1})$;
\item $(\overline{-2},\overline{0},\overline{-1},\overline{1}) \oplus (\overline{-1},\overline{2},\overline{1}) = (\overline{-1},\overline{0},\overline{-1},\overline{0},\overline{2})$;
\item $n \geq 2$, $(\overline{a_{1}},\ldots,\overline{a_{n}}) \oplus (\overline{0},\overline{0}) = (\overline{0},\overline{0}) \oplus (\overline{a_{1}},\ldots,\overline{a_{n}})=(\overline{a_{1}},\ldots,\overline{a_{n}})$.
\end{itemize}
}
\end{examples}

L'opération $\oplus$ définie ci-dessus n'est malheureusement ni commutative ni associative (voir \cite{WZ} exemple 2.1). En revanche, celle-ci possède la propriété suivante : si $(\overline{b_{1}},\ldots,\overline{b_{m}})$ est une solution de \eqref{p} alors la somme $(\overline{a_{1}},\ldots,\overline{a_{n}}) \oplus (\overline{b_{1}},\ldots,\overline{b_{m}})$ est solution de \eqref{p} si et seulement si $(\overline{a_{1}},\ldots,\overline{a_{n}})$ est solution de \eqref{p} (voir \cite{C,WZ} et \cite{M} proposition 3.7). 

\begin{definition}[\cite{C}, définition 2.5]
\label{22}

 Soient $(\overline{a_{1}},\ldots,\overline{a_{n}}) \in (\mathbb{Z}/N \mathbb{Z})^{n}$ et $(\overline{b_{1}},\ldots,\overline{b_{n}}) \in (\mathbb{Z}/N \mathbb{Z})^{n}$. On dit que $(\overline{a_{1}},\ldots,\overline{a_{n}}) \sim (\overline{b_{1}},\ldots,\overline{b_{n}})$ si $(\overline{b_{1}},\ldots,\overline{b_{n}})$ est obtenu par permutation circulaire de $(\overline{a_{1}},\ldots,\overline{a_{n}})$ ou de $(\overline{a_{n}},\ldots,\overline{a_{1}})$.

\end{definition}

On vérifie aisément que $\sim$ est une relation d'équivalence sur les $n$-uplets d'éléments de $\mathbb{Z}/N \mathbb{Z}$ (voir \cite{WZ} lemme 1.7). D'autre part, si $(\overline{a_{1}},\ldots,\overline{a_{n}}) \sim (\overline{b_{1}},\ldots,\overline{b_{n}})$ alors $(\overline{a_{1}},\ldots,\overline{a_{n}})$ est solution de \eqref{p} si et seulement si $(\overline{b_{1}},\ldots,\overline{b_{n}})$ est solution de \eqref{p} (voir \cite{C} proposition 2.6).

\begin{definition}[\cite{C}, définition 2.9]
\label{23}

Une solution $(\overline{c_{1}},\ldots,\overline{c_{n}})$ avec $n \geq 3$ de \eqref{p} est dite réductible s'il existe une solution de \eqref{p} $(\overline{b_{1}},\ldots,\overline{b_{l}})$ et un $m$-uplet $(\overline{a_{1}},\ldots,\overline{a_{m}})$ d'éléments de $\mathbb{Z}/N \mathbb{Z}$ tels que \begin{itemize}
\item $(\overline{c_{1}},\ldots,\overline{c_{n}}) \sim (\overline{a_{1}},\ldots,\overline{a_{m}}) \oplus (\overline{b_{1}},\ldots,\overline{b_{l}})$;
\item $m \geq 3$ et $l \geq 3$.
\end{itemize}
Une solution est dite irréductible si elle n'est pas réductible.

\end{definition}

\begin{remark} 

{\rm On ne considère pas $(\overline{0},\overline{0})$ comme une solution irréductible de \eqref{p}.}

\end{remark}

\indent Au cours de l'étude de l'équation \eqref{p}, on a introduit la notion de solutions monomiales rappelée dans la définition qui suit :

\begin{definition}[\cite{M}, définition 3.9]
\label{24}

i)~Soient $n \in \mathbb{N}^{*}$ et $\overline{k} \in \mathbb{Z}/N\mathbb{Z}$. On appelle solution $(n,\overline{k})$-monomiale un $n$-uplet d'éléments de $\mathbb{Z}/ N \mathbb{Z}$ constitué uniquement de $\overline{k}$ et solution de \eqref{p}.
\\
\\ ii)~On appelle solution monomiale une solution pour laquelle il existe $m \in \mathbb{N}^{*}$ et $\overline{l} \in \mathbb{Z}/N\mathbb{Z}$ tels qu'elle est $(m,\overline{l})$-monomiale.
\\
\\ iii)~On appelle solution $\overline{k}$-monomiale minimale une solution $(n,\overline{k})$-monomiale avec $n$ le plus petit entier pour lequel il existe une solution $(n,\overline{k})$-monomiale.
\\
\\ iv)~On appelle solution monomiale minimale une solution $\overline{k}$-monomiale minimale pour un $\overline{k} \in \mathbb{Z}/N\mathbb{Z}$.

\end{definition}

On a déjà démontré un certain nombre de propriétés d'irréductibilité pour ces solutions (voir \cite{M, M2} et la section \ref{EMI}). En particulier, on a

\begin{theorem}[\cite{M}, Théorème 3.16]
\label{25}

Si $N$ est premier alors les solutions monomiales minimales non nulles de \eqref{p} sont irréductibles. 

\end{theorem}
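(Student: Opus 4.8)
The plan is to argue by contradiction, after rewriting everything in terms of powers of the single matrix associated to $\overline{k}$. Fix $\overline{k}\neq\overline{0}$ and set $W(x):=\begin{pmatrix} x & -1\\ 1 & 0\end{pmatrix}$, so that for a monomial tuple all factors coincide and $M_{n}(\overline{k},\ldots,\overline{k})=W(\overline{k})^{n}$. Thus a nonzero $\overline{k}$-monomial minimal solution is precisely the tuple $(\overline{k},\ldots,\overline{k})$ of length $n$, where $n$ is the \emph{smallest} positive integer with $W(\overline{k})^{n}=\pm Id$. Since $\overline{k}\neq\overline{0}$ one checks directly that $W(\overline{k})\neq\pm Id$ and $W(\overline{k})^{2}=\begin{pmatrix}\overline{k}^{2}-1 & -\overline{k}\\ \overline{k} & -1\end{pmatrix}\neq\pm Id$, so $n\geq 3$ and the notion of reducibility is meaningful.

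Now suppose this solution is reducible. By Definition~\ref{23} there are a solution $(\overline{b_{1}},\ldots,\overline{b_{l}})$ and a tuple $(\overline{a_{1}},\ldots,\overline{a_{m}})$ with $m,l\geq 3$ and $(\overline{k},\ldots,\overline{k})\sim(\overline{a_{1}},\ldots,\overline{a_{m}})\oplus(\overline{b_{1}},\ldots,\overline{b_{l}})$. As $\sim$ only rearranges entries, every component of the sum equals $\overline{k}$, and reading off the definition of $\oplus$ after Definition~\ref{21} forces $\overline{a_{2}}=\cdots=\overline{a_{m-1}}=\overline{k}$. Because $(\overline{b_{1}},\ldots,\overline{b_{l}})$ and the sum are both solutions, the stability property of $\oplus$ recalled after Definition~\ref{21} shows that $(\overline{a_{1}},\overline{k},\ldots,\overline{k},\overline{a_{m}})$ is itself a solution. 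Finally $n=m+l-2$ with $l\geq 3$ gives $3\leq m\leq n-1$.

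The heart of the matter is then one computation. The solution condition reads $W(\overline{a_{m}})\,W(\overline{k})^{m-2}\,W(\overline{a_{1}})=\pm Id$, hence $W(\overline{a_{m}})^{-1}W(\overline{a_{1}})^{-1}=\pm W(\overline{k})^{m-2}$. By Cayley--Hamilton $W(\overline{k})^{2}=\overline{k}\,W(\overline{k})-Id$, so every power of $W(\overline{k})$, and thus the right-hand side, has the form $xId+y\,W(\overline{k})=\begin{pmatrix} x+y\overline{k} & -y\\ y & x\end{pmatrix}$ for some $x,y\in\mathbb{Z}/N\mathbb{Z}$. Comparing this with the explicit matrix $W(\overline{a_{m}})^{-1}W(\overline{a_{1}})^{-1}=\begin{pmatrix} -1 & \overline{a_{1}}\\ -\overline{a_{m}} & \overline{a_{1}}\,\overline{a_{m}}-1\end{pmatrix}$ entrywise, the two off-diagonal equations give $\overline{a_{1}}=\overline{a_{m}}=:\alpha$ (both equal to $-y$), and then the two diagonal equations collapse to the single relation $\alpha(\alpha-\overline{k})=0$.

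This is exactly where primality is indispensable: $\mathbb{Z}/N\mathbb{Z}$ is an integral domain, so $\alpha(\alpha-\overline{k})=0$ forces $\alpha=\overline{0}$ or $\alpha=\overline{k}$. If $\alpha=\overline{0}$, then $\pm W(\overline{k})^{m-2}=W(\overline{0})^{-2}=-Id$, so $W(\overline{k})^{m-2}=\pm Id$ with $0<m-2<n$; if $\alpha=\overline{k}$, then $\pm W(\overline{k})^{m-2}=W(\overline{k})^{-2}$, so $W(\overline{k})^{m}=\pm Id$ with $0<m<n$. In either case we contradict the minimality of $n$, so no reduction exists and the solution is irreducible. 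The only genuine obstacle I anticipate is the bookkeeping of the central computation, namely extracting $\overline{a_{1}}=\overline{a_{m}}$ and $\alpha(\alpha-\overline{k})=0$ and isolating the precise step $\alpha(\alpha-\overline{k})=0\Rightarrow\alpha\in\{\overline{0},\overline{k}\}$, which is exactly the implication that breaks down once $\mathbb{Z}/N\mathbb{Z}$ has zero divisors.
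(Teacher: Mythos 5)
Your proof is correct and follows essentially the same route as the paper's: the paper imports this theorem from \cite{M} (Théorème 3.16) without reproving it, but the mechanism it quotes in section \ref{EMI} — any reduction of a nonzero minimal monomial solution produces a solution of the form $(\overline{a},\overline{k},\ldots,\overline{k},\overline{b})$, which forces $\overline{a}=\overline{b}$ and $\overline{a}(\overline{a}-\overline{k})=\overline{0}$ (Proposition 3.15 de \cite{M}), so that primality of $N$ pins $\overline{a}$ to $\overline{0}$ or $\overline{k}$ and each case contradicts minimality of the size — is exactly the argument you give. The only difference is cosmetic: you re-derive the key identity $\overline{a}=\overline{b}$, $\overline{a}(\overline{a}-\overline{k})=\overline{0}$ from scratch via Cayley--Hamilton applied to $W(\overline{k})$ instead of citing it, which makes your write-up self-contained but does not change the approach.
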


Cependant, l'étude de \eqref{p} pour les petites valeurs de $N$ montrent que cette propriété est encore vérifiée pour $N=4$ et $N=6$ (voir \cite{M} Théorème 2.5). Cela amène naturellement au problème de la classification des entiers naturels $N$ pour lesquels les solutions monomiales minimales non nulles de \eqref{p} sont irréductibles. On introduit, pour cela, la notion suivante : 
		
\begin{definition}
\label{26}

Soit $N \geq 2$. On dit que $N$ est monomialement irréductible si les solutions monomiales minimales non nulles de \eqref{p} sont irréductibles. Dans le cas contraire, on dit que $N$ est monomialement réductible.

\end{definition}

On cherche donc à classifier les entiers monomialement irréductibles, ou, de façon plus modeste, à avoir des informations sur ces-derniers. Pour cela, on pose \[\Omega=\{107, 163, 173, 277, 283, 317, 347, 523, 557, 563, 613, 653\}\] \[\cup~\{733, 773, 787, 877, 907, 997\}.\]

\begin{theorem}
\label{27}

Soit $N \geq 2$. 
\\
\\i) Si $N$ est premier ou si $N \in \{4, 6, 8, 12, 24\}$ alors $N$ est monomialement irréductible.
\\
\\ii) On suppose $N$ pair. $N$ est monomialemnt irréductible si et seulement si $N \in \{2, 4, 6, 8, 12, 24\}$.
\\
\\iii) On suppose $N$ impair non premier. Si $N$ est monomialement irréductible alors $N$ est de la forme $\prod_{i=1}^{r} p_{i}$ où $r \geq 2$ et les $p_{i}$ sont des nombres premiers impairs deux à deux distincts vérifiant les conditions suivantes :
\begin{itemize}
\item $\forall i \in [\![1;r]\!]$, $p_{i} \equiv \pm 3 [5]$; 
\item $\forall i \in [\![1;r]\!]$, $p_{i} \equiv \pm 3 [8]$;
\item si $p_{i} \leq 1000$ alors $p_{i} \in \Omega$.
\end{itemize}

\end{theorem}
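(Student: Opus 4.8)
The plan is to recast everything in terms of the single matrix $R_k=\begin{pmatrix} k & -1\\ 1 & 0\end{pmatrix}$, since $M_n(k,\dots,k)=R_k^{\,n}$. I would introduce the Chebyshev-like sequence $S_0=\overline 0$, $S_1=\overline 1$, $S_{j+1}=\overline k\,S_j-S_{j-1}$, for which an immediate induction gives $R_k^{\,t}=\begin{pmatrix} S_{t+1} & -S_t\\ S_t & -S_{t-1}\end{pmatrix}$. Thus the $\overline k$-monomial minimal solution has size $n_k$, the least $t\ge 1$ with $R_k^{\,t}=\pm Id$, equivalently the least $t$ with $S_t=\overline 0$ and $S_{t+1}=\pm\overline 1$; write $R_k^{\,n_k}=\varepsilon_0 Id$ with $\varepsilon_0=\pm 1$. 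First I would prove the following reducibility criterion, which is the engine of the whole theorem: \emph{the solution $(\overline k,\dots,\overline k)$ of size $n_k$ is reducible if and only if there exists $j$ with $2\le j\le n_k-2$ and $S_j=\pm\overline 1$.}

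For the criterion, if $(\overline k)^{n_k}\sim A\oplus B$ then, the left side being constant, $A\oplus B$ must be the constant tuple; this forces the interiors of $A=(a_1,\overline k,\dots,\overline k,a_m)$ and $B=(b_1,\overline k,\dots,\overline k,b_\ell)$ to be filled with $\overline k$ and the boundary relations $a_1+b_\ell=a_m+b_1=\overline k$. Writing that $A$ is a solution, i.e. $R_{a_m}R_k^{\,m-2}R_{a_1}=\pm Id$, and comparing the four entries of $R_k^{\,m-2}=\pm R_{a_m}^{-1}R_{a_1}^{-1}$, forces $a_1=a_m=\pm S_{m-2}$ together with $S_{m-1}=\pm\overline 1$ (the last entry being automatic from $\det R_k^{\,m-2}=\overline 1$); the same holds for $B$. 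Setting $j=m-1$ (so $2\le j\le n_k-2$), the reflection identity $S_{n_k-i}=-\varepsilon_0 S_i$, read off from $R_k^{\,n_k-i}=\varepsilon_0(R_k^{\,i})^{-1}$, shows $S_{\ell-1}=\pm\overline 1$ is equivalent to $S_{m-1}=\pm\overline1$, and a short computation using the recurrence shows the relation $a_1+b_1=\overline k$ is then automatically satisfied. Conversely, any interior $j$ with $S_j=\pm\overline 1$ produces such $A,B$, hence a reduction.

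Granting the criterion, part~(i) splits as follows: the prime case is Theorem~\ref{25}, and for $N\in\{4,6,8,12,24\}$ I would run the criterion over the finitely many $\overline k\ne\overline 0$, computing $n_k$ and checking $S_j\ne\pm\overline 1$ for $2\le j\le n_k-2$ (for instance $\overline k=\overline 2$ gives $S_j=\overline{j}$, never $\pm\overline 1$ on the interior). Since $\{2,4,6,8,12,24\}$ are exactly the even divisors of $24$, this also yields the forward implication of~(ii), with $N=2$ immediate. For the converse of~(ii) and for~(iii) I would use the contrapositive, \emph{exhibiting} for each excluded $N$ a value $\overline k$ and an interior index $j$ with $S_j=\pm\overline 1$. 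Three distinguished values drive the congruence conditions: $\overline k=\overline 3$, for which $S_j=\overline{F_{2j}}$ (even-indexed Fibonacci, discriminant $5$); $\overline k=\overline 6$, for which $S_j=\overline{P_{2j}/2}$ (even-indexed Pell, discriminant $32$, eigenvalues $(1\pm\sqrt2)^2$); and the harmless linear case $\overline k=\overline 2$. Irreducibility of the $\overline 3$-solution forces $5$ to be a quadratic non-residue modulo each prime factor, i.e. $p_i\equiv\pm3\pmod 5$, and irreducibility of the $\overline 6$-solution forces $2$ to be one, i.e. $p_i\equiv\pm3\pmod 8$. Non-squarefreeness is excluded likewise: if $p^2\mid N$ the order of $R_3$ (or $R_6$) modulo $p^2$ grows by a factor $p$, opening an interior window in which $S_j$ meets $\pm\overline1$ (e.g. $N=9$, $\overline k=\overline 3$, $S_3=\overline 8=-\overline 1$); the same mechanism, with $\overline k=\overline 4$ and $S_3=\overline{15}=-\overline1$ modulo $16$, handles the even $N$ divisible by $16$, while a prime factor $\ge 5$ or the factor $9$ handles the remaining even $N\nmid 24$.

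The hard part is promoting a \emph{per-prime} failure to a genuine reduction modulo the \emph{whole} of $N$: the criterion demands $S_j(k)\equiv\pm\overline1$ simultaneously modulo every prime-power factor, with one global sign and one common index $j$ in the interior $2\le j\le n_k-2$. I would handle this by choosing $k$ through the Chinese Remainder Theorem (e.g. $k\equiv 3$ at the offending prime, a controlled value elsewhere) and analysing the relevant Fibonacci/Pell/Lucas sequence modulo each factor — rank of apparition, Pisano period, and the order of the eigenvalues — so as to align the occurrences of $\pm\overline 1$, match signs, and keep $j$ interior. This alignment is genuinely delicate for small primes, where it can fail even though both congruences hold (for example $p=13$ satisfies $13\equiv-2\pmod 5$ and $13\equiv 5\pmod 8$, yet must be excluded); the sharp condition for $p\le 1000$ is therefore not the congruences alone but membership in the explicit finite list $\Omega$, which I would pin down by a direct machine computation of the $n_k$ and of the interior values $S_j$ for the finitely many relevant $k$. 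I expect this simultaneous-alignment step, together with the finite computation isolating $\Omega$, to be the main obstacle; by contrast the reducibility criterion and the two residue conditions are comparatively routine.
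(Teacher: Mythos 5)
Your reducibility criterion is correct, and it is in fact the engine of the paper's own proof: it is exactly the combination of Lemma \ref{34} with the fact (quoted from [M], Prop.~3.15) that any solution of the form $(\overline{a},\overline{k},\dots,\overline{k},\overline{b})$ has $\overline{a}=\overline{b}$, so reducibility of the minimal monomial solution of size $h$ is equivalent to $K_{i}(\overline k,\dots,\overline k)=\pm\overline 1$ for some $1\le i\le h-3$. Part (i) as a finite verification also matches the paper (Proposition \ref{35}). The problems are in (ii) and (iii). The step you flag as ``the main obstacle'' --- converting a per-prime phenomenon into $S_j=\pm\overline1$ modulo \emph{all} of $N$, at a common interior index and with a single sign --- is not a technical afterthought: it is the actual content of the theorem, and the paper resolves it by a mechanism absent from your sketch. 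One does not fix $\overline k=\overline 3$ or $\overline 6$; one takes $k\equiv\mp1 \ [m]$ on the cofactor $m=N/p$ and $k\equiv$ a root, modulo $p$, of the relevant factor of $K_n\mp1$. Because each factorization contains the linear factor $k\pm1$ (e.g.\ $K_3-1=(k+1)(k^2-k-1)$, $K_4+1=(k+1)(k-1)(k^2-2)$, and similarly for the factors of $K_7+1$, $K_9+1$, $K_{13}+1$, $K_{19}+1$ that produce $\Omega$), the cofactor side automatically yields the \emph{same} sign, and interiority follows from divisibility: the minimal size is a multiple of $3$ and the values $h=3,6,9,\dots$ are excluded by tabulated computations mod $p$. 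Your prescription ``$k\equiv3$ at the offending prime'' does not do this: $K_3(3,3,3)=21\not\equiv\pm1\ [p]$ in general; you need the root of $X^2-X-1$, not the value $3$. Likewise your square-factor argument (``the order of $R_3$ mod $p^2$ grows by a factor $p$'') is not a theorem --- for Fibonacci it is the open no--Wall--Sun--Sun-prime statement, and its Pell analogue already fails at $p=13$ since $P_7=169=13^2$ --- whereas the paper (Propositions \ref{36} and \ref{37}) uses $\overline k=\overline{N/p}$, for which $\overline k^{\,2}=\overline 0$ makes everything explicit; similarly your fixed $\overline k=\overline 4$ for $16\mid N$ fails already at $N=32$, where $K_2(4)=15\not\equiv\pm1\ [32]$, while the paper's choice $\overline{N/4}$ works for all such $N$.

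There is a second, unacknowledged gap which is fatal to (ii) and to part of (iii): nothing in your toolkit excludes even integers (or odd multiples of $3$) all of whose odd prime factors pass your tests. Take $N=214=2\times107$: one has $107\equiv2\ [5]$, $107\equiv3\ [8]$ and $107\in\Omega$, so neither the Fibonacci/Pell residue mechanism nor any per-prime machine computation defining $\Omega$ says anything about $214$ --- yet the forward implication of (ii) requires $214$ to be monomially reducible. The paper devotes a separate device to exactly this family (Lemmas \ref{314} and \ref{316}, Propositions \ref{315} and \ref{317}): for $N=km$ with $k\in\{2,3,4,6,8,12,24\}$ and $m$ coprime to $6$, it considers the value $\overline{lm\pm2}$ with $l^2\equiv1\ [k]$, whose continuants obey the closed formula $K_n=K_r+\overline{6q(lm\pm1)}$ for $n=6q+r$; this produces a reducing solution of size $m$ or $m+2$ inside a minimal monomial solution of size $3m$ or $6m$. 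This construction is independent of quadratic residues, and it is what proves both the ``only if'' half of (ii) and the exclusion of the prime factor $3$ in (iii) (note $3\equiv\pm3$ both mod $5$ and mod $8$, so the congruence conditions alone never rule it out). Your proposal is therefore missing an essential idea, not merely the execution of a computation.
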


Ce théorème, démontré dans la section suivante, permet d'obtenir une description exhaustive des entiers monomialement irréductibles pairs et donne des propriétés assez précises sur les entiers monomialement irréductibles impairs. Un certain nombre d'éléments permettant de mettre en relief ce résultat sont donnés dans la section suivante. De plus, des résultats obtenus informatiquement, résumés dans la section \ref{CC}, permettent d'avoir une vision encore plus fine des entiers monomialement irréductibles impairs. 

\section{Démonstration du théorème \ref{27}}
\label{DT}

L'objectif de cette section est de fournir les différents éléments nécessaires à la preuve du théorème \ref{27}.

\subsection{Résultats préliminaires}
\label{RP}

On commence par donner un certain nombre de résultats classiques nécessaires pour la suite. 

\begin{theorem}[Lemme chinois; \cite{R}, Théorème 10.10]
\label{31}

Soit $(n,m) \in (\mathbb{N}^{*})^{2}$ avec $m$ et $n$ premiers entre eux. $\exists (u,v) \in \mathbb{Z}^{2}$ tel que $un+vm=1$. L'application \[\begin{array}{ccccc} 
\chi & : & \mathbb{Z}/mn\mathbb{Z} & \longrightarrow & \mathbb{Z}/m\mathbb{Z} \times \mathbb{Z}/n\mathbb{Z} \\
 & & x+mn\mathbb{Z} & \longmapsto & (x+m\mathbb{Z},x+n\mathbb{Z})  \\
\end{array}\] est un isomorphisme d'anneaux dont la bijection réciproque est \[\begin{array}{ccccc} 
\psi & : & \mathbb{Z}/m\mathbb{Z} \times \mathbb{Z}/n\mathbb{Z} & \longrightarrow & \mathbb{Z}/mn\mathbb{Z} \\
 & & (x+m\mathbb{Z},y+n\mathbb{Z}) & \longmapsto & (xun+yvm)+mn\mathbb{Z}  \\
\end{array}.\]

\end{theorem}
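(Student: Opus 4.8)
The plan is to establish the three assertions in turn: the existence of the Bézout coefficients, the fact that $\chi$ is a ring homomorphism, and the fact that $\psi$ is a two-sided inverse of $\chi$. First I would recall that since $m$ and $n$ are coprime, Bézout's identity (equivalently, the extended Euclidean algorithm) furnishes integers $u,v$ with $un+vm=1$. This single relation is the engine of the whole proof, so I would record immediately its two consequences: $un\equiv 1 \pmod{m}$ (because $vm\equiv 0 \pmod m$) and $vm\equiv 1 \pmod{n}$ (because $un\equiv 0 \pmod n$).

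Next I would check that $\chi$ is a well-defined ring homomorphism. Well-definedness is immediate: if $mn\mid(x-x')$ then a fortiori $m\mid(x-x')$ and $n\mid(x-x')$, so both components of $\chi(x+mn\mathbb{Z})$ are unchanged. Since reduction modulo $m$ and reduction modulo $n$ are each ring homomorphisms and the target $\mathbb{Z}/m\mathbb{Z}\times\mathbb{Z}/n\mathbb{Z}$ carries the componentwise ring structure, the map $\chi$ sends sums to sums, products to products, and $1$ to $(1,1)$; hence it is a morphism of rings.

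The heart of the argument is to verify that $\psi$ is the inverse of $\chi$. I would first confirm that $\psi$ is well-defined: if $m\mid(x-x')$ and $n\mid(y-y')$, writing $x-x'=mk$ and $y-y'=nl$ gives $(x-x')un+(y-y')vm=mn(ku+lv)\in mn\mathbb{Z}$, so the image in $\mathbb{Z}/mn\mathbb{Z}$ does not depend on the representatives. Then I would compute the two composites. For $\psi\circ\chi$, applying $\psi$ to $(x+m\mathbb{Z},x+n\mathbb{Z})$ yields $x(un+vm)+mn\mathbb{Z}=x+mn\mathbb{Z}$ by the Bézout relation, so this composite is the identity of $\mathbb{Z}/mn\mathbb{Z}$. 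For $\chi\circ\psi$, reducing $xun+yvm$ modulo $m$ and using $un\equiv 1$, $vm\equiv 0$ recovers $x+m\mathbb{Z}$, while reducing modulo $n$ and using $vm\equiv 1$, $un\equiv 0$ recovers $y+n\mathbb{Z}$; hence this composite is the identity of the product ring as well.

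Having shown that $\chi$ is a ring homomorphism admitting $\psi$ as a two-sided inverse, I conclude that $\chi$ is a ring isomorphism whose reciprocal bijection is exactly $\psi$, which is the stated claim. There is no genuine obstacle here—this is a classical result—and the only point requiring care is keeping the roles of $m$ and $n$ straight in the modular reductions, since it is $un$ that is congruent to $1$ modulo $m$ whereas it is $vm$ that is congruent to $1$ modulo $n$.
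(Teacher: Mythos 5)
Your proof is correct and complete. Note, however, that the paper offers no proof of this statement at all: it is the classical Chinese Remainder Theorem, quoted as a preliminary tool directly from the cited textbook of Rombaldi (Théorème 10.10), so there is nothing in the paper to compare your argument against. What you wrote—Bézout coefficients, well-definedness of $\chi$ and $\psi$, the homomorphism property of $\chi$, and the verification that $\psi\circ\chi$ and $\chi\circ\psi$ are the identities via $un\equiv 1 \pmod m$ and $vm\equiv 1\pmod n$—is the standard textbook proof, and each step is carried out accurately.
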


On aura également besoin de quelques éléments sur les carrés modulo $N$. Si $p$ est un nombre premier impair et si $a$ est un entier premier avec $p$, on note $\left(\dfrac{a}{p}\right)$ le symbole de Legendre c'est-à-dire :
\[\left(\dfrac{a}{p}\right)=\left\{
    \begin{array}{ll}
        1 & \mbox{si } \overline{a}~{\rm est~un~carr\acute{e}~dans}~\mathbb{Z}/p\mathbb{Z}; \\
        -1 & \mbox{sinon }.
    \end{array}
\right.  \\ \]

\noindent On dispose du résultat suivant :

\begin{theorem}
\label{32}

i) (Loi de réciprocité quadratique de Gauss; \cite{G}, Théorème XII.25) 
\\Soient $p$ et $q$ deux nombres premiers impairs distincts. On a \[\left(\dfrac{p}{q}\right)\left(\dfrac{q}{p}\right)= (-1)^{\frac{p-1}{2}\frac{q-1}{2}}.\]
\\
\\ ii) (Loi complémentaire; \cite{G}, proposition XII.27) 
\\Soit $p$ un nombre premier impair, $\left(\dfrac{2}{p}\right)= (-1)^{\frac{p^{2}-1}{8}}$.

\end{theorem}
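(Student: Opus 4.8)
La stratégie est de déduire les deux assertions du lemme de Gauss. Rappelons que ce dernier affirme que, pour un nombre premier impair $p$ et un entier $a$ premier avec $p$, on a $\left(\dfrac{a}{p}\right) = (-1)^{\mu}$ où $\mu$ désigne le nombre d'éléments de l'ensemble $\left\{a, 2a, \ldots, \frac{p-1}{2}a\right\}$ dont le reste de la division euclidienne par $p$ est strictement supérieur à $p/2$. La première étape consiste à établir ce lemme : on note $r_{k}$ le reste de $ka$ modulo $p$ pour $k \in [\![1; \frac{p-1}{2}]\!]$, et on observe que les valeurs $\min(r_{k}, p - r_{k})$ parcourent exactement $\{1, \ldots, \frac{p-1}{2}\}$ (car elles sont non nulles, inférieures ou égales à $\frac{p-1}{2}$, et deux à deux distinctes). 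En multipliant les $ka$ entre eux et en regroupant les signes changés, on obtient $a^{(p-1)/2} \equiv (-1)^{\mu} \pmod p$, ce qui donne le lemme via le critère d'Euler $a^{(p-1)/2} \equiv \left(\dfrac{a}{p}\right) \pmod p$.

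Pour la loi complémentaire (assertion ii), on applique le lemme avec $a = 2$. Il faut alors compter le nombre $\mu$ d'entiers pairs de $\{2, 4, \ldots, p-1\}$ qui dépassent $p/2$, c'est-à-dire le nombre d'entiers $k \in [\![1; \frac{p-1}{2}]\!]$ tels que $2k > p/2$, soit $\mu = \frac{p-1}{2} - \lfloor p/4 \rfloor$. Une discussion selon la classe de $p$ modulo $8$ montre alors que $\mu \equiv \frac{p^{2}-1}{8} \pmod 2$, ce qui conclut.

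Pour la loi de réciprocité (assertion i), la deuxième étape est de raffiner le lemme de Gauss en une formule de comptage de points du réseau. Pour $q$ impair, on montre que $\mu \equiv \sum_{k=1}^{(p-1)/2} \lfloor kq/p \rfloor \pmod 2$, de sorte que $\left(\dfrac{q}{p}\right) = (-1)^{S_{1}}$ avec $S_{1} = \sum_{k=1}^{(p-1)/2} \lfloor kq/p \rfloor$ ; de même $\left(\dfrac{p}{q}\right) = (-1)^{S_{2}}$ avec $S_{2} = \sum_{j=1}^{(q-1)/2} \lfloor jp/q \rfloor$. La dernière étape consiste à établir l'identité $S_{1} + S_{2} = \frac{p-1}{2}\cdot\frac{q-1}{2}$ par un argument géométrique : dans le rectangle $\{(x, y) : 1 \leq x \leq \frac{p-1}{2},\ 1 \leq y \leq \frac{q-1}{2}\}$, aucun point entier n'appartient à la droite $py = qx$ (car $p$ et $q$ sont premiers entre eux), et $S_{1}$ (resp. $S_{2}$) compte exactement les points situés strictement sous (resp. au-dessus de) cette droite. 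Comme le rectangle contient $\frac{p-1}{2}\cdot\frac{q-1}{2}$ points entiers, on obtient l'identité voulue, puis $\left(\dfrac{p}{q}\right)\left(\dfrac{q}{p}\right) = (-1)^{S_{1}+S_{2}} = (-1)^{\frac{p-1}{2}\frac{q-1}{2}}$.

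Le point le plus délicat sera le passage du lemme de Gauss à l'expression de $\mu$ en termes de sommes de parties entières : à partir de l'écriture $kq = p\lfloor kq/p \rfloor + r_{k}$, il faut relier modulo $2$ la somme des restes $\sum_{k} r_{k}$ à la fois à $\mu$ et à $\sum_{k} k$, en exploitant que les $\min(r_{k}, p - r_{k})$ forment une permutation de $\{1, \ldots, \frac{p-1}{2}\}$. Une fois la congruence $\mu \equiv \sum_{k} \lfloor kq/p \rfloor \pmod 2$ acquise, l'argument de comptage dans le rectangle est élémentaire et robuste.
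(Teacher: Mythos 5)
Ta preuve est correcte : c'est l'argument classique lemme de Gauss + comptage de points entiers (preuve d'Eisenstein), et chaque étape que tu esquisses tient la route — la bijectivité des $\min(r_{k},p-r_{k})$, le cas $a=2$ avec $\mu=\frac{p-1}{2}-\lfloor p/4\rfloor$ et la discussion modulo $8$, la congruence $\mu\equiv\sum_{k}\lfloor kq/p\rfloor \pmod 2$ (qui utilise bien la parité de $q-1$ et l'imparité de $p$), et enfin le découpage du rectangle $\frac{p-1}{2}\times\frac{q-1}{2}$ par la droite $py=qx$ qui ne contient aucun point entier. Il faut cependant noter que l'article, lui, ne démontre pas ce théorème : il le cite comme résultat classique tiré de la référence [G] (Gozard, Théorème XII.25 et Proposition XII.27), et seul le lemme \ref{33} qui en découle y est prouvé. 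Ton travail fournit donc une démonstration autonome là où l'article s'appuie sur la littérature ; la seule dépendance externe qui reste dans ta rédaction est le critère d'Euler $a^{(p-1)/2}\equiv\left(\frac{a}{p}\right) \pmod p$, que tu invoques sans le prouver, ce qui est un choix raisonnable mais mérite d'être signalé si l'on vise une preuve entièrement close.
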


\noindent Ce résultat permet de démontrer l'énoncé ci-dessous :

\begin{lemma}
\label{33}

Soit $p$ un nombre premier impair.
\\
\\ i) 5 est un carré modulo $p$ si et seulement si $p=5$ ou $p \equiv \pm 1 [5]$.
\\
\\ ii) 2 est un carré modulo $p$ si et seulement si $p \equiv \pm 1 [8]$.

\end{lemma}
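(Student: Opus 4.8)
The plan is to reduce both statements to computations of Legendre symbols via Theorem~\ref{32}, using that by definition $\left(\dfrac{a}{p}\right)=1$ precisely when $\overline{a}$ is a nonzero square modulo $p$.

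For part ii), I would apply the complementary law directly: $\left(\dfrac{2}{p}\right)=(-1)^{\frac{p^{2}-1}{8}}$. It then suffices to determine the parity of $\frac{p^{2}-1}{8}$ according to the residue of $p$ modulo $8$. Writing $p$ in one of the four classes $\pm 1,\pm 3 \pmod 8$ and substituting $p=8k\pm 1$ or $p=8k\pm 3$, one checks that $\frac{p^{2}-1}{8}$ is even exactly when $p\equiv\pm 1\pmod 8$ and odd when $p\equiv\pm 3\pmod 8$. This yields $\left(\dfrac{2}{p}\right)=1$ if and only if $p\equiv\pm 1\pmod 8$, which is the claim.

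For part i), I would first dispose of the case $p=5$: here $\overline{5}=\overline{0}$ is trivially a square, so the statement holds. For $p\neq 5$, since $5\equiv 1\pmod 4$ the exponent $\frac{5-1}{2}\cdot\frac{p-1}{2}$ is even, so the quadratic reciprocity law of Theorem~\ref{32} gives $\left(\dfrac{5}{p}\right)\left(\dfrac{p}{5}\right)=1$, hence $\left(\dfrac{5}{p}\right)=\left(\dfrac{p}{5}\right)$. It then remains to identify the nonzero squares modulo $5$, namely $\overline{1}$ and $\overline{4}$, i.e. the classes $\pm\overline{1}$. Thus $\left(\dfrac{p}{5}\right)=1$ exactly when $p\equiv\pm 1\pmod 5$, which combined with the $p=5$ case gives the announced equivalence.

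Neither part presents a genuine obstacle; the only points requiring care are handling the exceptional case $p=5$ separately in part i) (since $\left(\dfrac{p}{5}\right)$ is only defined for $p$ coprime to $5$) and carrying out the sign bookkeeping in the case analysis modulo $8$ without error.
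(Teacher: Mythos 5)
Your proposal is correct and follows essentially the same route as the paper: part i) handles $p=5$ separately and then uses quadratic reciprocity with the observation that the exponent $\frac{p-1}{2}\cdot\frac{5-1}{2}=p-1$ is even, reducing to the identification of the squares $\pm\overline{1}$ modulo $5$; part ii) applies the complementary law and determines the parity of $\frac{p^{2}-1}{8}$ by writing $p=8k\pm 1$ or $p=8k\pm 3$, exactly as in the paper.
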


\begin{proof}

i) 5 est un carré modulo 5. Supposons maintenant $p \neq 5$. D'après le théorème précédent,
\[\left(\dfrac{5}{p}\right)=\left(\dfrac{p}{5}\right) (-1)^{\frac{p-1}{2}\frac{5-1}{2}}=\left(\dfrac{p}{5}\right) (-1)^{p-1}=\left(\dfrac{p}{5}\right)~{\rm (car}~p-1~{\rm est~pair)}.\]
Donc, 5 est un carré modulo $p$ si et seulement si $p$ est un carré modulo 5 c'est-à-dire si et seulement si $p \equiv \pm 1 [5]$ ou $p=5$.
\\
\\ii) Si $p \equiv \pm 1 [8]$. Il existe $k \in \mathbb{Z}$ tel que $p=8k \pm 1$. On a \[\left(\dfrac{2}{p}\right)= (-1)^{\frac{p^{2}-1}{8}}=(-1)^{\frac{64k^{2} \pm 16k}{8}}=((-1)^{4k^{2} \pm k})^{2}=1.\]
\\Si $p \not\equiv \pm 1 [8]$ alors comme $p$ est impair $p \equiv \pm 3 [8]$. Il existe $k \in \mathbb{Z}$ tel que $p=8k \pm 3$. On a \[\left(\dfrac{2}{p}\right)= (-1)^{\frac{p^{2}-1}{8}}=(-1)^{\frac{64k^{2} \pm 48k+8}{8}}=((-1)^{4k^{2} \pm 3k})^{2}(-1)^{1}=-1.\]

\end{proof}

\noindent On aura également besoin dans la suite du résultat suivant sur l'expression de la matrice $M_{n}(a_{1},\ldots,a_{n})$ en terme de déterminant. On pose $K_{-1}=0$, $K_{0}=1$ et on note pour $i \geq 1$ \[K_i(a_{1},\ldots,a_{i})=
\left|
\begin{array}{cccccc}
a_1&1&&&\\[4pt]
1&a_{2}&1&&\\[4pt]
&\ddots&\ddots&\!\!\ddots&\\[4pt]
&&1&a_{i-1}&\!\!\!\!\!1\\[4pt]
&&&\!\!\!\!\!1&\!\!\!\!a_{i}
\end{array}
\right|.\] $K_{i}(a_{1},\ldots,a_{i})$ est le continuant de $a_{1},\ldots,a_{i}$. On dispose de l'égalité suivante (voir \cite{CO,MO}) : \[M_{n}(a_{1},\ldots,a_{n})=\begin{pmatrix}
    K_{n}(a_{1},\ldots,a_{n}) & -K_{n-1}(a_{2},\ldots,a_{n}) \\
    K_{n-1}(a_{1},\ldots,a_{n-1})  & -K_{n-2}(a_{2},\ldots,a_{n-1}) 
   \end{pmatrix}.\]
	
\begin{lemma}
\label{34}

Soient $n$ et $N$ deux entiers naturels supérieurs à 2 et $\overline{k} \in \mathbb{Z}/N\mathbb{Z}$ tels que $K_{n}(\overline{k},\ldots,\overline{k})=\overline{\epsilon}$ avec $\epsilon=\pm 1$. Posons $\overline{a}=\overline{\epsilon}K_{n-1}(\overline{k},\ldots,\overline{k})$. $(\overline{a},\overline{k},\ldots,\overline{k},\overline{a}) \in (\mathbb{Z}/N\mathbb{Z})^{n+2}$ est une solution de \eqref{p}.

\end{lemma}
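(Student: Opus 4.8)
The plan is to exploit the matrix factorisation built into the very definition of $M_{n+2}$, rather than to expand the large continuant directly. Since the tuple $(\overline{a},\overline{k},\ldots,\overline{k},\overline{a})$ carries $\overline{a}$ in its first and last positions and $\overline{k}$ in the $n$ intermediate positions, the product of elementary matrices splits by associativity as
\[
M_{n+2}(\overline{a},\overline{k},\ldots,\overline{k},\overline{a})=\begin{pmatrix}\overline{a}&-1\\1&0\end{pmatrix}\,M_{n}(\overline{k},\ldots,\overline{k})\,\begin{pmatrix}\overline{a}&-1\\1&0\end{pmatrix},
\]
the central $n$ factors forming exactly $M_{n}(\overline{k},\ldots,\overline{k})$. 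First I would record, from the continuant expression stated just before the lemma together with the fact that every intermediate entry equals $\overline{k}$, that
\[
M_{n}(\overline{k},\ldots,\overline{k})=\begin{pmatrix}K_{n}&-K_{n-1}\\K_{n-1}&-K_{n-2}\end{pmatrix},
\]
where I abbreviate $K_{j}:=K_{j}(\overline{k},\ldots,\overline{k})$; the two off-diagonal continuants coincide because deleting the first row/column or the last leaves in each case $n-1$ copies of $\overline{k}$.

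Next I would extract the single scalar identity that makes everything collapse. Each elementary factor has determinant $\overline{1}$, hence $\det M_{n}(\overline{k},\ldots,\overline{k})=\overline{1}$, which via the displayed matrix reads $K_{n-1}^{2}-K_{n}K_{n-2}=\overline{1}$. Using the hypothesis $K_{n}=\overline{\epsilon}$ this becomes $K_{n-1}^{2}=\overline{1}+\overline{\epsilon}\,K_{n-2}$, and I would also note $\overline{\epsilon}^{2}=\overline{1}$ since $\epsilon=\pm1$.

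Finally I would substitute $K_{n}=\overline{\epsilon}$ and $\overline{a}=\overline{\epsilon}\,K_{n-1}$ into the triple product and compute the four entries. Carrying out the left multiplication first produces $\overline{a}\,\overline{\epsilon}-K_{n-1}=\overline{\epsilon}^{2}K_{n-1}-K_{n-1}=\overline{0}$ in the top-left slot, while the relation $K_{n-1}^{2}=\overline{1}+\overline{\epsilon}K_{n-2}$ reduces the top-right slot to $-\overline{\epsilon}$; this yields the intermediate matrix $\left(\begin{smallmatrix}\overline{0}&-\overline{\epsilon}\\\overline{\epsilon}&-K_{n-1}\end{smallmatrix}\right)$. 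The subsequent right multiplication by the last elementary factor then kills the surviving off-diagonal entry through the same substitution $\overline{\epsilon}\,\overline{a}=K_{n-1}$, leaving $-\overline{\epsilon}\,Id$. Since $-\overline{\epsilon}=\mp\overline{1}$, this equals $\pm Id$, so $(\overline{a},\overline{k},\ldots,\overline{k},\overline{a})$ solves $(E_{N})$. None of the steps is delicate: the only point requiring care is the sign and continuant bookkeeping, and the whole argument hinges on the single identity $K_{n-1}^{2}-K_{n}K_{n-2}=\overline{1}$ combined with $\epsilon^{2}=1$.
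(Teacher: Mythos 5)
Your proof is correct: the factorisation $M_{n+2}(\overline{a},\overline{k},\ldots,\overline{k},\overline{a})=E_{\overline{a}}\,M_{n}(\overline{k},\ldots,\overline{k})\,E_{\overline{a}}$ (with $E_{\overline{a}}$ the elementary factor) is legitimate, every entry of your triple product checks out, and your conclusion $M_{n+2}(\overline{a},\overline{k},\ldots,\overline{k},\overline{a})=-\overline{\epsilon}\,Id$ is exactly what the paper obtains. The route is organised differently from the paper's, even though both arguments run on the same two engines, namely $\overline{\epsilon}^{2}=\overline{1}$ and unimodularity. The paper never splits the product: it writes $M_{n+2}(\overline{a},\overline{k},\ldots,\overline{k},\overline{a})$ directly through its continuant entries, kills the two off-diagonal entries via the recurrence $K_{n+1}(\overline{k},\ldots,\overline{k},\overline{a})=\overline{a}K_{n}(\overline{k},\ldots,\overline{k})-K_{n-1}(\overline{k},\ldots,\overline{k})=\overline{a\epsilon}-\overline{a\epsilon}=\overline{0}$, and then pins down the last corner, $K_{n+2}(\overline{a},\overline{k},\ldots,\overline{k},\overline{a})=-\overline{\epsilon}$, by invoking $\det M_{n+2}=\overline{1}$ once at the very end. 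You instead apply the continuant formula only to the inner constant block, convert unimodularity of that inner matrix into the identity $K_{n-1}^{2}-K_{n}K_{n-2}=\overline{1}$, and use it mid-computation to collapse the top-right slot, after which the second multiplication finishes mechanically. Note that your cancellation $\overline{a}K_{n}-K_{n-1}=\overline{0}$ in the half-product is literally the same computation as the paper's recurrence step, so the two proofs are close cousins; what yours buys is that it stays entirely within $2\times 2$ matrix algebra and never manipulates continuants of size $n+1$ or $n+2$, while the paper's version is shorter on the page because the determinant is used exactly once, at the end, rather than being traded for the intermediate identity $K_{n-1}^{2}=\overline{1}+\overline{\epsilon}K_{n-2}$.
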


\begin{proof}

\begin{eqnarray*}
M &=& M_{n+2}(\overline{a},\overline{k},\ldots,\overline{k},\overline{a}) \\
  &=& \begin{pmatrix}
    K_{n+2}(\overline{a},\overline{k},\ldots,\overline{k},\overline{a}) & -K_{n+1}(\overline{k},\ldots,\overline{k},\overline{a}) \\
    K_{n+1}(\overline{a},\overline{k},\ldots,\overline{k})  & -K_{n}(\overline{k},\ldots,\overline{k}) 
   \end{pmatrix} \\
	&=& \begin{pmatrix}
    K_{n+2}(\overline{a},\overline{k},\ldots,\overline{k},\overline{a}) & -K_{n+1}(\overline{k},\ldots,\overline{k},\overline{a}) \\
    K_{n+1}(\overline{a},\overline{k},\ldots,\overline{k})  & -\overline{\epsilon} 
   \end{pmatrix}. \\
\end{eqnarray*}

\noindent Comme $\overline{\epsilon}^{2}=\overline{1}$, $K_{n-1}(\overline{k},\ldots,\overline{k})=\overline{a\epsilon}$.
\\
\\On a, $K_{n+1}(\overline{k},\ldots,\overline{k},\overline{a})=\overline{a}K_{n}(\overline{k},\ldots,\overline{k})-K_{n-1}(\overline{k},\ldots,\overline{k})=\overline{a\epsilon}-\overline{a\epsilon}=\overline{0}$. On procède de la même façon pour le calcul de $K_{n+1}(\overline{a},\overline{k},\ldots,\overline{k})$. 
\\
\\Comme $M \in SL_{2}(\mathbb{Z}/N\mathbb{Z})$, $K_{n+2}(\overline{a},\overline{k},\ldots,\overline{k},\overline{a})=-\overline{\epsilon}$. Ainsi, $M=-\overline{\epsilon} Id$.

\end{proof}

\begin{proposition}
\label{spe}

Soit $n$ un entier naturel supérieur à 2. $(\overline{a_{1}},\ldots,\overline{a_{n}})$ est une solution irréductible de \eqref{p} si et seulement si $(\overline{-a_{1}},\ldots,\overline{-a_{n}})$ est une solution irréductible de \eqref{p}.

\end{proposition}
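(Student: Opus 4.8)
The plan is to show that the componentwise negation map
\[\phi:(\overline{a_{1}},\ldots,\overline{a_{n}})\longmapsto(\overline{-a_{1}},\ldots,\overline{-a_{n}})\]
is an involution of the set of $n$-uplets of $\mathbb{Z}/N\mathbb{Z}$ which preserves the property of being a solution of \eqref{p} and is compatible with both $\oplus$ and $\sim$. The statement then follows formally from the definition of (ir)reducibility, since $\phi$ is its own inverse.

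First I would establish the crucial matrix identity. Writing $D=\begin{pmatrix}\overline{1}&\overline{0}\\\overline{0}&\overline{-1}\end{pmatrix}$, a direct computation gives, for every $\overline{a}\in\mathbb{Z}/N\mathbb{Z}$,
\[\begin{pmatrix}\overline{-a}&\overline{-1}\\\overline{1}&\overline{0}\end{pmatrix}=-D\begin{pmatrix}\overline{a}&\overline{-1}\\\overline{1}&\overline{0}\end{pmatrix}D.\]
Since $D^{2}=Id$, multiplying these $n$ factors telescopes the conjugations and yields
\[M_{n}(\overline{-a_{1}},\ldots,\overline{-a_{n}})=(-1)^{n}\,D\,M_{n}(\overline{a_{1}},\ldots,\overline{a_{n}})\,D.\]
(Equivalently, one may invoke the continuant formula recalled before Lemme~\ref{34} together with the parity $K_{i}(\overline{-a_{1}},\ldots,\overline{-a_{i}})=(-1)^{i}K_{i}(\overline{a_{1}},\ldots,\overline{a_{i}})$, proved by induction from the recurrence.) As $\pm Id$ commutes with $D$ and $D^{2}=Id$, this identity shows that $M_{n}(\overline{a_{1}},\ldots,\overline{a_{n}})=\pm Id$ if and only if $M_{n}(\overline{-a_{1}},\ldots,\overline{-a_{n}})=\pm Id$; hence $\phi$ sends solutions of \eqref{p} to solutions of \eqref{p}.

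Next I would check the two compatibilities. For $\sim$ the claim is immediate, since circular permutation and reversal commute with applying $\overline{-(\cdot)}$ to each entry. For $\oplus$, I would simply unfold the definition: negating each component of
\[(\overline{a_{1}},\ldots,\overline{a_{n}})\oplus(\overline{b_{1}},\ldots,\overline{b_{m}})=(\overline{a_{1}+b_{m}},\overline{a_{2}},\ldots,\overline{a_{n-1}},\overline{a_{n}+b_{1}},\overline{b_{2}},\ldots,\overline{b_{m-1}})\]
and using $\overline{-(a_{1}+b_{m})}=\overline{(-a_{1})+(-b_{m})}$ at the first junction (and likewise $\overline{-(a_{n}+b_{1})}=\overline{(-a_{n})+(-b_{1})}$ at the second) shows that $\phi\big((\overline{a_{1}},\ldots,\overline{a_{n}})\oplus(\overline{b_{1}},\ldots,\overline{b_{m}})\big)=\phi(\overline{a_{1}},\ldots,\overline{a_{n}})\oplus\phi(\overline{b_{1}},\ldots,\overline{b_{m}})$. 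The only point requiring care is that the boundary-sensitive definition of $\oplus$ merely adds a pair of components at each junction, so componentwise negation distributes across it; this verification is where I expect the (very mild) bookkeeping to lie.

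Finally I would assemble the argument. Suppose $(\overline{a_{1}},\ldots,\overline{a_{n}})$ with $n\geq 3$ is a reducible solution, witnessed by a solution $(\overline{b_{1}},\ldots,\overline{b_{l}})$ and an $m$-uplet $(\overline{d_{1}},\ldots,\overline{d_{m}})$ with $m,l\geq 3$ and $(\overline{a_{1}},\ldots,\overline{a_{n}})\sim(\overline{d_{1}},\ldots,\overline{d_{m}})\oplus(\overline{b_{1}},\ldots,\overline{b_{l}})$. Applying $\phi$ and invoking the three properties above gives $\phi(\overline{a_{1}},\ldots,\overline{a_{n}})\sim\phi(\overline{d_{1}},\ldots,\overline{d_{m}})\oplus\phi(\overline{b_{1}},\ldots,\overline{b_{l}})$, where $\phi(\overline{b_{1}},\ldots,\overline{b_{l}})$ is again a solution, $\phi(\overline{d_{1}},\ldots,\overline{d_{m}})$ is an $m$-uplet, and the lengths $m,l\geq 3$ are unchanged; hence $(\overline{-a_{1}},\ldots,\overline{-a_{n}})$ is reducible. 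Since $\phi\circ\phi=\mathrm{id}$, the converse follows by applying the same implication to $(\overline{-a_{1}},\ldots,\overline{-a_{n}})$, so reducibility is equivalent for the two uplets, and therefore so is irreducibility. Combined with the solution-preservation of the first step (and the trivial case $n=2$, where $(\overline{0},\overline{0})$ is the only solution and is fixed by $\phi$), this yields the desired equivalence. I expect the conjugation identity of the first step to be the genuine crux, the remainder being formal manipulation.
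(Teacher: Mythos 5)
Your proof is correct and takes essentially the same route as the paper: both arguments rest on the observation that componentwise negation distributes over $\oplus$ (and is compatible with $\sim$), so it carries any reduction witness of a tuple to a reduction witness of its negative, and the involutivity of negation then yields the equivalence. The only real difference is that the paper simply cites \cite{M} (proposition 3.6 ii)) for the fact that negation preserves solutions of \eqref{p}, whereas you prove this ingredient yourself via the conjugation identity $M_{n}(\overline{-a_{1}},\ldots,\overline{-a_{n}})=(-1)^{n}\,D\,M_{n}(\overline{a_{1}},\ldots,\overline{a_{n}})\,D$ with $D=\begin{pmatrix}\overline{1}&\overline{0}\\ \overline{0}&\overline{-1}\end{pmatrix}$, which makes your argument self-contained.
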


\begin{proof}

Soit $(\overline{a_{1}},\ldots,\overline{a_{n}})$ une solution irréductible de \eqref{p}. $(\overline{-a_{1}},\ldots,\overline{-a_{n}})$ est une solution de \eqref{p} (voir \cite{M} proposition 3.6 ii)). Supposons par l'absurde que $(\overline{-a_{1}},\ldots,\overline{-a_{n}})$ est réductible. 
\\
\\Il existe $(\overline{b_{1}},\ldots,\overline{b_{l}})$ et $(\overline{c_{1}},\ldots,\overline{c_{l'}})$ solutions de \eqref{p} différentes de $(\overline{0},\overline{0})$ avec $l+l'=n+2$ et $l,l' \geq 3$ telles que

\begin{eqnarray*}
(\overline{-a_{1}},\ldots,\overline{-a_{n}}) &=& (\overline{b_{1}},\ldots,\overline{b_{l}}) \oplus (\overline{c_{1}},\ldots,\overline{c_{l'}}) \\
                                             &=& (\overline{b_{1}+c_{l'}},\overline{b_{2}},\ldots,\overline{b_{l-1}},\overline{b_{l}+c_{1}},\overline{c_{2}},\ldots,\overline{c_{l'-1}}). \\
\end{eqnarray*}

\noindent Donc, on a 

\begin{eqnarray*}
(\overline{a_{1}},\ldots,\overline{a_{n}}) &=& (\overline{-b_{1}-c_{l'}},\overline{-b_{2}},\ldots,\overline{-b_{l-1}},\overline{-b_{l}-c_{1}},\overline{-c_{2}},\ldots,\overline{-c_{l'-1}}) \\
                                           &=& (\overline{-b_{1}},\ldots,\overline{-b_{l}}) \oplus (\overline{-c_{1}},\ldots,\overline{-c_{l'}}).\\
\end{eqnarray*}

\noindent Comme $(\overline{-b_{1}},\ldots,\overline{-b_{l}})$ et $(\overline{-c_{1}},\ldots,\overline{-c_{l'}})$ sont des solutions de \eqref{p} (voir \cite{M} proposition 3.6 ii)), $(\overline{a_{1}},\ldots,\overline{a_{n}})$ est réductible. Ceci est absurde.
\\
\\Ainsi, $(\overline{-a_{1}},\ldots,\overline{-a_{n}})$ est une solution irréductible de \eqref{p}. On procède, de même, pour démontrer l'autre implication.

\end{proof}

\subsection{Quelques entiers monomialement irréductibles}
\label{EMI}

On souhaite chercher d'autres entiers monomialement irréductibles non premiers. Pour montrer qu'un entier est monomialement irréductible il faut vérifier que toutes les solutions monomiales minimales non nulles sont irréductibles. 
\\
\\On dispose déjà des résultats généraux suivants (voir \cite{M,M2}) :

\begin{itemize}
\item les solutions $\pm \overline{1}$-monomiales minimales de \eqref{p} sont irréductibles et ce sont les seules solutions de taille 3;
\item les solutions de \eqref{p} de taille 4 sont de la forme $(\overline{a},\overline{b},\overline{a},\overline{b})$ avec $\overline{ab}=\overline{2}$ et $(\overline{-a},\overline{b},\overline{a},\overline{-b})$ avec $\overline{ab}=\overline{0}$;
\item si $N \geq 3$, les solutions $\pm \overline{2}$-monomiales minimales de \eqref{p} sont irréductibles;
\item si $N$ est pair, la solution $\overline{\frac{N}{2}}$-monomiale minimale de \eqref{p} est irréductible.
\\
\end{itemize}

\noindent Notons que ces résultats nous permettent de retrouver immédiatement que 4 et 6 sont monomialement irréductibles.
\\
\\Par la proposition \ref{spe}, la solution $\overline{k}$-monomiale minimale de \eqref{p} est irréductible si et seulement si la solution $\overline{-k}$-monomiale minimale de \eqref{p} est irréductible.
\\
\\D'autre part, si $(\overline{a},\overline{k},\ldots,\overline{k},\overline{b})$ est une solution de \eqref{p} alors $\overline{a}=\overline{b}$ et $\overline{a}(\overline{a}-\overline{k})=\overline{0}$ (voir \cite{M} proposition 3.15). En particulier, si $\overline{k} \neq \overline{0}$ et si ce polynôme a pour seules racines $\overline{0}$ et $\overline{k}$ alors la solution $\overline{k}$- monomiale minimale de \eqref{p} est irréductible. La réciproque est malheureusement fausse comme on va pouvoir le constater dans ce qui suit. 

\begin{proposition}
\label{35}

8, 12 et 24 sont monomialement irréductibles.

\end{proposition}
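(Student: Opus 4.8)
The plan is to fix $N \in \{8,12,24\}$ and to show, for every nonzero $\overline{k} \in \mathbb{Z}/N\mathbb{Z}$, that the minimal $\overline{k}$-monomial solution of \eqref{p} is irreducible; by Proposition \ref{spe} it suffices to treat $\overline{k}$ up to sign. First I would remove the values already settled by the general results recalled above, namely $\overline{k}=\pm\overline{1}$, $\overline{k}=\pm\overline{2}$ and $\overline{k}=\overline{N/2}$. Up to sign this leaves $\overline{3}$ for $N=8$; $\overline{3},\overline{4},\overline{5}$ for $N=12$; and $\overline{3},\ldots,\overline{11}$ for $N=24$.

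Next I would apply the criterion recalled just before the statement: if the polynomial $X(X-\overline{k})$ has no root in $\mathbb{Z}/N\mathbb{Z}$ besides $\overline{0}$ and $\overline{k}$, then the minimal $\overline{k}$-monomial solution is irreducible. A short computation, conveniently organized through the lemme chinois (Théorème \ref{31}) with $12 = 4\cdot 3$ and $24 = 8\cdot 3$, shows this happens for $\overline{k}=\overline{3}$ in all three moduli and for $\overline{k}=\overline{9}$ modulo $24$. This finishes $N=8$ completely and removes one further case each modulo $12$ and $24$.

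The heart of the matter is the remaining residues, where $X(X-\overline{k})$ acquires extra roots and the criterion no longer applies — this is exactly the announced situation where the converse fails: the polynomial hypothesis breaks down yet irreducibility persists. Here I would argue with the monomial continuants $K_{j}:=K_{j}(\overline{k},\ldots,\overline{k})$, governed by $K_{j}=\overline{k}K_{j-1}-K_{j-2}$, $K_{0}=\overline{1}$, $K_{1}=\overline{k}$. The matrix identity recalled above gives $M_{j+1}(\overline{k},\ldots,\overline{k})=\pm Id$ if and only if $K_{j}=\overline{0}$ and $K_{j-1}=\pm\overline{1}$, so the minimal size is $n=j_{0}+1$, where $j_{0}$ is the least index with $K_{j_{0}}=\overline{0}$ and $K_{j_{0}-1}=\pm\overline{1}$. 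Then I would reprove, in this monomial setting, the structure behind the criterion: since $(\overline{k},\ldots,\overline{k})$ is fixed by $\sim$, any reduction is a genuine equality $(\overline{a},\overline{k},\ldots,\overline{k},\overline{a})\oplus(\overline{b},\overline{k},\ldots,\overline{k},\overline{b})$ of two palindromic solutions, of sizes $j_{a}+2$ and $j_{b}+2$ with $j_{a},j_{b}\geq 1$ and $j_{a}+j_{b}=n-2$; and such a palindromic tuple with $j$ interior entries is a solution of \eqref{p} if and only if $K_{j}=\pm\overline{1}$. Sufficiency is Lemme \ref{34}, while necessity follows at once from the continuant expression of $M_{j+2}(\overline{a},\overline{k},\ldots,\overline{k},\overline{a})$ recalled above, whose lower-right entry equals $-K_{j}$ and must therefore be $\pm\overline{1}$.

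Consequently, reducibility of the minimal solution forces two indices $j_{a},j_{b}\in\{1,\ldots,n-3\}$ with $K_{j_{a}}=K_{j_{b}}=\pm\overline{1}$ and $j_{a}+j_{b}=n-2$. For each of the finitely many remaining residues (two modulo $12$, seven modulo $24$) I would list the continuant sequence modulo $N$ for $0\leq j\leq n-2$ and observe that its only values in $\{\pm\overline{1}\}$ occur at $j=0$ and $j=n-2$, neither of which lies in $\{1,\ldots,n-3\}$; hence $n-2$ cannot be split into two such positive indices, no reduction exists, and the solution is irreducible. I expect the main difficulty to be organizational rather than conceptual: running these finite continuant checks cleanly, and, above all, correctly locating the \emph{first admissible} zero $j_{0}$ — not an earlier zero $K_{j}=\overline{0}$ with $K_{j-1}\notin\{\pm\overline{1}\}$, which really does occur (for instance $\overline{k}=\overline{4}$ modulo $12$ has $K_{5}=\overline{0}$ but $K_{4}=\overline{5}$, so the minimal size is $12$, not $6$) — so that $n$, and with it the admissible range of indices, is pinned down correctly.
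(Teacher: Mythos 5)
Your proposal is correct, and its skeleton matches the paper's: eliminate $\pm\overline{1}$, $\pm\overline{2}$ and $\overline{N/2}$ via the recalled general results, reduce to $\overline{k}$ up to sign by Proposition \ref{spe}, settle $\overline{3}$ (and $\overline{9}$ mod $24$) by the root criterion, and finish the remaining residues by a finite verification. Where you genuinely diverge is in how that verification is organized. The paper stays inside the machinery of \cite{M}: for each stubborn $\overline{k}$ it lists the roots of $X(X-\overline{k})$ other than $\overline{0},\overline{k}$ (the only possible ends of a reducing piece, by \cite{M} prop.~3.15), then checks by direct computation that $M_{l}(\overline{a},\overline{k},\ldots,\overline{k},\overline{a})\neq\pm Id$ for each such root $\overline{a}$ and each $l$ between $4$ and the minimal size minus $2$ — a double loop of matrix computations (the table in the proof for $N=24$, plus ad hoc arguments for $\overline{4},\overline{5}$ mod $12$). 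You instead scan the single scalar sequence $K_{j}=K_{j}(\overline{k},\ldots,\overline{k})$ and invoke the necessary condition, read off the lower-right entry of the continuant matrix, that a reducing piece with $j$ interior entries forces $K_{j}=\pm\overline{1}$; irreducibility then follows from the absence of $\pm\overline{1}$ among $K_{1},\ldots,K_{n-3}$. The two checks have equivalent content (if $K_{j}=\overline{\epsilon}=\pm\overline{1}$, Lemme \ref{34} produces the corresponding palindromic solution, whose ends are automatically roots of $X(X-\overline{k})$), but yours is more economical and more self-contained: one recurrence $K_{j}=\overline{k}K_{j-1}-K_{j-2}$ per residue simultaneously pins down the minimal size (your warning about non-admissible zeros, e.g.\ $K_{5}=\overline{0}$ but $K_{4}=\overline{5}$ for $\overline{4}$ mod $12$, is exactly right and matches the paper's sizes) and performs the irreducibility check, with no need to enumerate the extra roots. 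I checked that your claimed computations do hold: for the two residues mod $12$ and the seven mod $24$, the sequences $(K_{j})_{0\leq j\leq n-2}$ take values $\pm\overline{1}$ only at $j=0$ and $j=n-2$, the sizes being $12,6$ (for $\overline{4},\overline{5}$ mod $12$) and $12,6,8,6,12,24,6$ (for $\overline{4},\overline{5},\overline{6},\overline{7},\overline{8},\overline{10},\overline{11}$ mod $24$), in agreement with the paper's table. Two cosmetic points: your ``if and only if'' for palindromic tuples should be read as ``there exists $\overline{a}$'' (that is precisely Lemme \ref{34}); and palindromy — hence \cite{M} prop.~3.15 — is not actually needed for your necessity step, since only the fact that the interior entries of any reducing piece equal $\overline{k}$ is used, so your argument is even lighter than you present it.
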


\begin{proof}

Pour $N=8$, le polynôme $X(X-\overline{3})=\overline{0}$ a pour seules racines $\overline{0}$ et $\overline{3}$. Ainsi, les solutions monomiales minimales non nulles de $(E_{8})$ sont irréductibles.
\\
\\Pour $N=12$, on distingue les 3 cas :
\begin{itemize}
\item Le polynôme $X(X-\overline{3})=\overline{0}$ a pour seules racines $\overline{0}$ et $\overline{3}$. Ainsi, la solution $\overline{3}$-monomiale minimale de $(E_{12})$ est irréductible. 
\item Le polynôme $X(X-\overline{4})=\overline{0}$ a pour racines $\overline{0}$, $\overline{4}$, $\overline{6}$, $\overline{10}$. La solution $\overline{4}$-monomiale minimale de $(E_{12})$ est de taille 12. Il n'existe pas de solution de la forme $(\overline{6},\overline{4},\ldots,\overline{4},\overline{6})$ de taille inférieure à 10. Donc, la solution $\overline{4}$-monomiale minimale de $(E_{12})$ est irréductible. 
\item La solution $\overline{5}$-monomiale minimale de $(E_{12})$ est de taille 6. $\overline{5}^{2}=\overline{1} \neq \overline{0}$ et $\overline{5} \neq \pm \overline{1}$, on ne peut donc pas réduire cette solution avec une solution de taille 3 ou 4. Ainsi, la solution $\overline{5}$-monomiale minimale de $(E_{12})$ est irréductible. 
\\
\end{itemize} 

\noindent Pour $N=24$, on regarde pour $k \in [\![3;11]\!]$, la taille de la solution $\overline{k}$-monomiale minimale de $(E_{24})$ ainsi que les racines de $X(X-\overline{k})$ différentes de $\overline{0}$ et $\overline{k}$.

\begin{center}
\begin{tabular}{|c|c|c|c|c|c|c|c|m{1cm}|c|}
  \hline
  $\overline{k}$  & $\overline{3}$ & $\overline{4}$ & $\overline{5}$ & $\overline{6}$ & $\overline{7}$  & $\overline{8}$ & $\overline{9}$ & \centering $\overline{10}$ & $\overline{11}$ \rule[-7pt]{0pt}{20pt} \\
  \hline
  taille  & 12 & 12 & 6 & 8 & 6 & 12 & 12 & \centering 24 & 6  \rule[-7pt]{0pt}{20pt} \\
  \hline
	racines & \diagbox[width=10mm,height=22mm]{}{} & $\overline{12}$, $\overline{16}$ & $\overline{8}$, $\overline{21}$ & $\overline{12}$, $\overline{18}$ & $\overline{15}$, $\overline{16}$ & $\overline{12}$, $\overline{20}$ & \diagbox[width=10mm,height=22mm]{}{} & \centering $\overline{4}$, $\overline{6}$ $\overline{12}$, $\overline{22}$ $\overline{16}$, $\overline{18}$ & $\overline{3}$, $\overline{8}$  \rule[-7pt]{0pt}{20pt} \\
  \hline
\end{tabular}
\end{center}

\noindent Les solutions $\overline{3}$ et $\overline{9}$ monomiales minimales de $(E_{24})$ sont irréductibles. Pour les autres, on calcule $M_{l}(\overline{a},\overline{k},\ldots,\overline{k},\overline{a})$, avec, $\overline{a}$ pris dans les éléments de la ligne racines et de la colonne débutant par $\overline{k}$, et, $l$ variant entre 4 et la taille de la solution $\overline{k}$-monomiale minimale moins 2.
\\
\\En effectuant les calculs, on ne trouve aucune solution de la forme souhaitée. Les solutions monomiales minimales non nulles de $(E_{24})$ sont donc irréductibles.

\end{proof}

\noindent Ceci prouve le premier point du théorème \ref{27}.

\subsection{Le cas des facteurs carrés}
\label{FC}

On va maintenant traiter le cas des entiers possédant un facteur carré.

\begin{proposition}[\cite{M2}, propositions 3.5, 3.6 et 3.8]
\label{36}

Soit $p$ un nombre premier. Si $p^{2} \mid N$ alors la solution $\overline{\frac{N}{p}}$-monomiale minimale de \eqref{p} est de taille $2p$. Celle-ci est irréductible si et seulement si $p=2$.

\end{proposition}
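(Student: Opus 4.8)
The plan is to exploit the nilpotency of $\overline{k}$. Writing $N = p^{2}m$ and $k = N/p = pm$, one has $k^{2} = Nm \equiv 0 \pmod N$, so $\overline{k}^{2} = \overline{0}$ in $\mathbb{Z}/N\mathbb{Z}$. First I would use the matrix--continuant identity recalled just before Lemma \ref{34}, which for a constant string gives $M_{n}(\overline{k},\ldots,\overline{k}) = \left(\begin{smallmatrix} u_{n} & -u_{n-1} \\ u_{n-1} & -u_{n-2} \end{smallmatrix}\right)$, where $u_{i} := K_{i}(\overline{k},\ldots,\overline{k})$ satisfies $u_{-1}=0$, $u_{0}=1$ and $u_{i} = \overline{k}\,u_{i-1} - u_{i-2}$. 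Using $\overline{k}^{2}=\overline{0}$, a short induction yields the closed forms $u_{2j} = (-1)^{j}$ and $u_{2j+1} = (-1)^{j}(j+1)\overline{k}$.

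For the size claim, note that $M_{n}(\overline{k},\ldots,\overline{k}) = \pm Id$ forces the off-diagonal entry $u_{n-1}$ to vanish. Since $u_{2j}=(-1)^{j}\neq\overline{0}$, this requires $n$ even, say $n = 2(j+1)$, and then $u_{n-1} = u_{2j+1} = (-1)^{j}(j+1)\overline{k} = \overline{0}$ amounts to $(j+1)\overline{k}=\overline{0}$, i.e. $N \mid (j+1)\tfrac{N}{p}$, i.e. $p \mid (j+1)$. The least such positive $n$ is $n = 2p$; at $n=2p$ one checks $u_{2p} = (-1)^{p}$, $u_{2p-1}=\overline{0}$ and $-u_{2p-2} = (-1)^{p}$, so $M_{2p}(\overline{k},\ldots,\overline{k}) = (-1)^{p}Id$. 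Hence the $\overline{k}$-monomial minimal solution has size $2p$.

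For irreducibility, the case $p=2$ is immediate: then $4 \mid N$, $\overline{k}=\overline{N/2}$, and the size-$4$ solution is exactly the $\overline{N/2}$-monomial minimal solution, which is irreducible by the general result recalled at the start of Subsection \ref{EMI}. For $p$ odd (equivalently $p \geq 3$) I would instead produce an explicit reduction. Lemma \ref{34} applied to the length-$2i$ monomial string (for which $u_{2i}=(-1)^{i}=\pm 1$) furnishes, for each $i\geq 1$, a solution $S_{i} := (\overline{-ik},\overline{k},\ldots,\overline{k},\overline{-ik})$ of size $2i+2$, since there $\overline{a} = \overline{\epsilon}\,u_{2i-1} = (-1)^{i}(-1)^{i-1}i\overline{k} = -i\overline{k}$. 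Choosing $i_{1},i_{2}\geq 1$ with $i_{1}+i_{2}=p-1$ (possible exactly when $p\geq 3$, e.g. $i_{1}=1$, $i_{2}=p-2$) and forming $S_{i_{1}} \oplus S_{i_{2}}$ via Definition \ref{21}, the two junction entries both equal $\overline{-i_{1}k}+\overline{-i_{2}k} = \overline{-(p-1)k} = \overline{k}$ (using $\overline{pk}=\overline{N}=\overline{0}$), while every interior entry is already $\overline{k}$; thus $S_{i_{1}} \oplus S_{i_{2}} = (\overline{k},\ldots,\overline{k})$ of size $2p$. Since $S_{i_{1}}$ and $S_{i_{2}}$ are solutions of sizes $2i_{1}+2\geq 4$ and $2i_{2}+2\geq 4$, this exhibits the size-$2p$ monomial solution as reducible.

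The routine parts are the continuant computation and the size argument. The main obstacle is the reduction step for odd $p$: the key point is to realize that the symmetric solutions provided by Lemma \ref{34} can be combined under $\oplus$, and that the end-matching condition $\overline{a}+\overline{b}=\overline{k}$ is automatically compatible with the size constraint $i_{1}+i_{2}=p-1$ because $\overline{pk}=\overline{0}$. This is precisely what fails for $p=2$, where no admissible split with both blocks of size $\geq 3$ exists, in agreement with the irreducibility in that case.
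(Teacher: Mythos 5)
Your argument is correct, but note that the paper itself gives no proof of Proposition \ref{36}: it is quoted verbatim from \cite{M2} (propositions 3.5, 3.6 et 3.8), so there is no internal proof to compare against; what you have produced is a self-contained derivation inside this paper's toolkit. Your computation is sound at every step: $\overline{k}^{2}=\overline{0}$ for $k=N/p$ since $p^{2}\mid N$; the closed forms $K_{2j}(\overline{k},\ldots,\overline{k})=\overline{(-1)^{j}}$ and $K_{2j+1}(\overline{k},\ldots,\overline{k})=\overline{(-1)^{j}(j+1)k}$ follow by the two-term induction; the minimality argument correctly observes that $M_{n}=\pm Id$ forces the off-diagonal continuant to vanish, hence $n$ even and $p\mid n/2$, and that at $n=2p$ one indeed gets $M_{2p}(\overline{k},\ldots,\overline{k})=(-1)^{p}Id$. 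For the reducibility when $p\geq 3$, your decomposition $(\overline{k},\ldots,\overline{k})=S_{i_{1}}\oplus S_{i_{2}}$, with $S_{i}=(\overline{-ik},\overline{k},\ldots,\overline{k},\overline{-ik})$ a solution of size $2i+2$ furnished by Lemma \ref{34} and $i_{1}+i_{2}=p-1$ (so that the junction entries become $\overline{-(p-1)k}=\overline{k}$ thanks to $\overline{pk}=\overline{0}$), is exactly the mechanism the paper deploys in its proof of Proposition \ref{37}: there, with $k=N/4$, the size-8 monomial solution is split as $(\overline{N/2},\overline{k},\overline{k},\overline{k},\overline{k},\overline{N/2})\oplus(\overline{-k},\overline{k},\overline{k},\overline{-k})$, which is precisely your $S_{2}\oplus S_{1}$ since $\overline{N/2}=\overline{-2k}$; so your construction is the natural generalization of that ad hoc computation, and it buys the full statement for all primes at once. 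Two minor points: for $p=2$ you rely on the fact, recalled at the start of Subsection \ref{EMI}, that the $\overline{N/2}$-monomial minimal solution is irreducible for even $N$ — legitimate here since the paper quotes it independently of Proposition \ref{36}, but be aware it is itself an external input from \cite{M,M2}, not something you prove; and in the size argument you should say explicitly that $\overline{\pm 1}\neq\overline{0}$ (true since $N\geq p^{2}\geq 4$) so that the even-index continuants genuinely cannot vanish.
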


\begin{proposition}
\label{37}

Si $16 \mid N$ alors la solution $\overline{\frac{N}{4}}$-monomiale minimale de \eqref{p} est réductible et de taille $8$. 

\end{proposition}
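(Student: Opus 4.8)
On pose $\overline{k}=\overline{N/4}$, qui est bien défini puisque $16\mid N$ entraîne $4\mid N$. La remarque de départ est que l'hypothèse $16\mid N$ fournit deux identités cruciales dans $\mathbb{Z}/N\mathbb{Z}$ : d'une part $\overline{k}^{2}=\overline{N^{2}/16}=\overline{0}$ (car $N\mid N^{2}/16$), et d'autre part $4\overline{k}=\overline{N}=\overline{0}$. Ces deux égalités permettront de mener tous les calculs de continuants explicitement et sont le moteur de la preuve.

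Pour la taille, je poserais $P_{i}=K_{i}(\overline{k},\ldots,\overline{k})$ ($i$ copies) et j'utiliserais la récurrence $P_{i}=\overline{k}\,P_{i-1}-P_{i-2}$ avec $P_{-1}=\overline{0}$, $P_{0}=\overline{1}$. Grâce à $\overline{k}^{2}=\overline{0}$, une récurrence immédiate donne les formes closes $P_{2j}=\overline{(-1)^{j}}$ et $P_{2j+1}=\overline{(-1)^{j}(j+1)k}$. En écrivant $M_{n}(\overline{k},\ldots,\overline{k})=\begin{pmatrix} P_{n} & -P_{n-1} \\ P_{n-1} & -P_{n-2}\end{pmatrix}$, le $n$-uplet $(\overline{k},\ldots,\overline{k})$ est solution de \eqref{p} si et seulement si $P_{n-1}=\overline{0}$ (le déterminant valant $\overline{1}$, les termes diagonaux valent alors $\pm\overline{1}$ et sont égaux, d'où $\pm Id$). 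Comme $P_{\mathrm{pair}}=\pm\overline{1}\neq\overline{0}$, cela impose $n$ pair ; et pour $n$ pair, $P_{n-1}=\pm\,\overline{(n/2)k}$ s'annule si et seulement si $(n/2)\overline{k}=\overline{0}$, soit $8\mid n$ (en utilisant $\overline{k}=\overline{N/4}$). Le plus petit tel entier étant $n=8$, la solution $\overline{k}$-monomiale minimale est de taille $8$.

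Pour la réductibilité, j'exhiberais une décomposition explicite de cette solution de taille $8$. Le lemme \ref{34} appliqué au bloc central de quatre $\overline{k}$ (on a $K_{4}(\overline{k},\ldots,\overline{k})=P_{4}=\overline{1}=\overline{\epsilon}$ et $\overline{a}=\overline{\epsilon}P_{3}=\overline{-2k}$) montre que $A=(\overline{-2k},\overline{k},\overline{k},\overline{k},\overline{k},\overline{-2k})$ est une solution de taille $6$. Par ailleurs $B=(\overline{-k},\overline{k},\overline{k},\overline{-k})$ est une solution de taille $4$, du type $(\overline{-a},\overline{b},\overline{a},\overline{-b})$ avec $\overline{ab}=\overline{0}$ (ici $a=b=k$ et $\overline{k}^{2}=\overline{0}$). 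Un calcul direct de la somme donne \[A\oplus B=(\overline{-2k-k},\overline{k},\overline{k},\overline{k},\overline{k},\overline{-2k-k},\overline{k},\overline{k})=(\overline{-3k},\overline{k},\overline{k},\overline{k},\overline{k},\overline{-3k},\overline{k},\overline{k}),\] et comme $4\overline{k}=\overline{0}$ on a $\overline{-3k}=\overline{k}$, si bien que $A\oplus B=(\overline{k},\ldots,\overline{k})$ est exactement la solution $\overline{k}$-monomiale minimale. Les tailles $m=6$ et $l=4$ étant toutes deux $\geq 3$ et $B$ étant une solution, la définition \ref{23} donne bien la réductibilité.

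Le point le plus délicat n'est pas le calcul en lui-même mais l'identification de la bonne décomposition : il faut deviner les deux facteurs $A$ et $B$ dont la somme reconstitue un uplet entièrement constitué de $\overline{k}$, ce qui repose de façon essentielle sur l'égalité $4\overline{k}=\overline{0}$ (qui fait coïncider $\overline{-3k}$ avec $\overline{k}$ et rend $B$ valide). La vérification que $A$ est une solution se ramène au lemme \ref{34}, et celle de $B$ à la description connue des solutions de taille $4$ ; il restera surtout à rédiger proprement la récurrence donnant les formes closes de $P_{i}$ ainsi que l'analyse de divisibilité $8\mid n$ qui garantit la minimalité de la taille.
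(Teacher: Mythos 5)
Your proof is correct and takes essentially the same approach as the paper: since $4\,\overline{k}=\overline{0}$, your two factors $(\overline{-2k},\overline{k},\overline{k},\overline{k},\overline{k},\overline{-2k})$ and $(\overline{-k},\overline{k},\overline{k},\overline{-k})$ coincide with the paper's $(\overline{N/2},\overline{N/4},\overline{N/4},\overline{N/4},\overline{N/4},\overline{N/2})$ and $(\overline{-N/4},\overline{N/4},\overline{N/4},\overline{-N/4})$, so the reducibility decomposition you exhibit is exactly the paper's. The only difference is in the size-$8$ claim, which you establish by a self-contained closed-form computation of the continuants ($P_{2j}=\overline{(-1)^{j}}$, $P_{2j+1}=\overline{(-1)^{j}(j+1)k}$) whereas the paper cites Proposition 3.5 de \cite{M2} together with the known description of size-$4$ solutions; one small caution there: deducing that the diagonal entries are $\pm\overline{1}$ from the determinant alone is not valid in a general $\mathbb{Z}/N\mathbb{Z}$ (where $x^{2}=\overline{1}$ does not force $x=\pm\overline{1}$), but this is harmless here since your closed form gives $P_{2j}=\overline{(-1)^{j}}$ directly.
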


\begin{proof}

Par la proposition 3.5 de \cite{M2}, $(\overline{\frac{N}{4}},\ldots,\overline{\frac{N}{4}}) \in (\mathbb{Z}/N\mathbb{Z})^{8}$ est solution de \eqref{p}. Donc, la taille de la solution $\overline{\frac{N}{4}}$-monomiale minimale de \eqref{p} divise 8, c'est-à-dire qu'elle est égale à 1, 2, 4 ou 8.
\\
\\\eqref{p} n'a pas de solution de taille 1 et $\overline{\frac{N}{4}} \neq \overline{0}$, donc la taille est égale à 4 ou 8. De plus, $\overline{\frac{N}{4}}\overline{\frac{N}{4}}=\overline{N\frac{N}{16}}=\overline{0}$ et $-\overline{\frac{N}{4}} \neq \overline{\frac{N}{4}}$ donc la taille est différente de 4. On en déduit que la solution $\overline{\frac{N}{4}}$-monomiale minimale de \eqref{p} est de taille $8$. 
\\
\\Puisque $\overline{\frac{N}{4}}\overline{\frac{N}{4}}=\overline{0}$, $(\overline{-\frac{N}{4}},\overline{\frac{N}{4}},\overline{\frac{N}{4}},\overline{-\frac{N}{4}})$ est une solution de \eqref{p}. De plus, \[(\overline{\frac{N}{4}},\overline{\frac{N}{4}},\overline{\frac{N}{4}},\overline{\frac{N}{4}},\overline{\frac{N}{4}},\overline{\frac{N}{4}},\overline{\frac{N}{4}},\overline{\frac{N}{4}})=(\overline{\frac{N}{2}},\overline{\frac{N}{4}},\overline{\frac{N}{4}},\overline{\frac{N}{4}},\overline{\frac{N}{4}},\overline{\frac{N}{2}}) \oplus (\overline{-\frac{N}{4}},\overline{\frac{N}{4}},\overline{\frac{N}{4}},\overline{-\frac{N}{4}}).\] 
\noindent Ainsi, la solution $\overline{\frac{N}{4}}$-monomiale minimale de \eqref{p} est réductible.

\end{proof}

\noindent On en déduit qu'un entier monomialement irréductible est de la forme $2^{a} \prod_{i=1}^{r} p_{i}$ où $a \in [\![0;3]\!]$, $r \geq 0$ et les $p_{i}$ sont des nombres premiers impairs deux à deux distincts (avec la convention $\prod_{i=1}^{0} p_{i}=1$).

\subsection{Quelques éléments sur les nombres impairs}
\label{NI}

Jusqu'à présent, on a cherché des valeurs de $\overline{k}$ particulières pour lesquelles on a calculé la taille et cherché une solution pour la réduire. On va ici procéder dans l'autre sens en imposant la taille de la solution utilisée pour réduire et en cherchant une valeur de $\overline{k}$ pour laquelle la solution $\overline{k}$-monomiale minimale sera réduite par une solution de la taille choisie.
\\
\\On commence par le résultat ci-dessous :

\begin{proposition}
\label{38}

Soit $N \geq 2$ non premier et $p$ un nombre premier divisant $N$. Si $p=5$ ou si $p \equiv \pm 1[5]$ alors $N$ est monomialement réductible.

\end{proposition}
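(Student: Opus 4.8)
The plan is to exhibit, for a well-chosen $\overline{k} \neq \overline{0}$, a reducible nonzero $\overline{k}$-monomial minimal solution, which by Definition \ref{26} forces $N$ to be monomially reducible. Since the hypothesis forces $p$ to be odd (both $p = 5$ and $p \equiv \pm 1\,[5]$ exclude $p = 2$), if $p^{2} \mid N$ then $N$ is already monomially reducible by Proposition \ref{36}; I may therefore assume $p \parallel N$ (that is $p \mid N$ but $p^{2} \nmid N$) and write $N = pM$ with $\gcd(p,M) = 1$ and $M \geq 2$ (the case $M = 1$ being excluded since $N$ is not prime).

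The key algebraic fact is the factorisation $K_{3}(X,X,X) - 1 = X^{3} - 2X - 1 = (X+1)(X^{2} - X - 1)$, whose quadratic factor has discriminant $5$. By Lemma \ref{33}, the hypothesis ``$p = 5$ or $p \equiv \pm 1\,[5]$'' is exactly the statement that $X^{2} - X - 1$ admits a root $\overline{k_{0}}$ modulo $p$ (a double root $\overline{3}$ when $p = 5$). Using the Chinese remainder theorem (Theorem \ref{31}) I would choose $\overline{k}$ with $k \equiv k_{0}\,[p]$ and $k \equiv -1\,[M]$: then the quadratic factor vanishes modulo $p$ and the linear factor vanishes modulo $M$, so that $K_{3}(\overline{k},\overline{k},\overline{k}) = \overline{1}$. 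Applying Lemma \ref{34} with $\epsilon = 1$ produces a genuine solution $B = (\overline{c},\overline{k},\overline{k},\overline{k},\overline{c})$ of size $5$, with $\overline{c} = \overline{k^{2} - 1}$.

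To turn $B$ into a reduction, set $A = (\overline{k-c},\overline{k},\ldots,\overline{k},\overline{k-c})$ of size $m$; a direct evaluation of the operation of Definition \ref{21} gives $A \oplus B = (\overline{k},\ldots,\overline{k})$ of size $m + 3$. Hence, writing $n$ for the size of the $\overline{k}$-monomial minimal solution and choosing $m = n - 3$, Definition \ref{23} shows that this minimal solution is reducible as soon as $m \geq 3$, i.e. $n \geq 6$. To secure $n \geq 6$ I would rule out every monomial solution of size $s$ with $2 \leq s \leq 5$: via the continuant description $M_{s} = \left(\begin{smallmatrix} K_{s} & -K_{s-1} \\ K_{s-1} & -K_{s-2}\end{smallmatrix}\right)$ this reduces to $K_{s-1}(\overline{k},\ldots,\overline{k}) \neq \overline{0}$ for $s-1 \in \{1,2,3,4\}$, which follows from $k \not\equiv 0\,[p]$ and $k^{2} \equiv k+1\,[p]$ (indices $1$ and $2$), from $K_{3} = \overline{1}$ (index $3$), and from $K_{4}(\overline{k},\ldots,\overline{k}) \equiv -1\,[M]$ (index $4$).

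The main obstacle is precisely this minimality verification: reducibility of an \emph{arbitrary} monomial solution is not enough, one must reduce the minimal one, so the whole construction is calibrated so that the small continuants $K_{1},\dots,K_{4}$ stay away from $\overline{0}$ while $K_{3}$ equals $\overline{1}$. The ancillary checks that the reduction is proper, namely $\overline{k} \neq \overline{0}$, $\overline{c} \neq \overline{0}$ and $\overline{c} \neq \overline{k}$ (so that $B$ is not itself monomial), all follow at once from the two defining congruences $k \equiv k_{0}\,[p]$ and $k \equiv -1\,[M]$.
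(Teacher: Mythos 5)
Your proposal is correct, and its core construction is exactly the one in the paper: reduce to the case $p^{2} \nmid N$ via Proposition \ref{36} (both of you use, explicitly or implicitly, that $p$ is odd here), factor $K_{3}(X,X,X)-1=(X+1)(X^{2}-X-1)$, invoke Lemma \ref{33} to get a root $k_{0}$ of $X^{2}-X-1$ modulo $p$, glue it with $-1$ modulo the cofactor by the Chinese remainder theorem so that $K_{3}(\overline{k},\overline{k},\overline{k})=\overline{1}$, and apply Lemma \ref{34} to produce the size-$5$ solution $(\overline{c},\overline{k},\overline{k},\overline{k},\overline{c})$. The one place you genuinely diverge is the verification that the $\overline{k}$-monomial minimal solution has size at least $6$: the paper argues structurally — by CRT the sizes of the $(-1+m\mathbb{Z})$- and $(k_{0}+p\mathbb{Z})$-monomial minimal solutions of $(E_{m})$ and $(E_{p})$ divide $h$, the first forces $3 \mid h$, and $h=3$ is excluded since $k_{0} \not\equiv \pm 1\,[p]$ — whereas you rule out sizes $2$ through $5$ directly by checking $K_{1},K_{2},K_{3},K_{4} \neq \overline{0}$ (mod $p$ for the first two, trivially for $K_{3}$, mod $M$ for $K_{4}$). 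Your check is more elementary and self-contained, avoiding the fact (used implicitly by the paper) that monomial solution sizes are multiples of the minimal one; the paper's divisibility argument is what scales to the later Propositions \ref{310}--\ref{313}, where the same scheme runs with solutions of sizes $9$, $11$, $15$ and $21$. Three minor remarks: size $1$ should formally also be excluded, but it is the same criterion ($K_{0}=\overline{1}\neq\overline{0}$, and the paper notes $(E_{N})$ has no size-$1$ solution); your closing checks that $\overline{c}\neq\overline{0}$ and $\overline{c}\neq\overline{k}$ are superfluous, since Definition \ref{23} only requires the size-$5$ factor to be a solution and both factors to have size at least $3$; and your explicit computation of $A \oplus B = (\overline{k},\ldots,\overline{k})$ usefully makes precise a step the paper leaves implicit.
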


\begin{proof}

Si $p^{2} \mid N$ alors, par la proposition \ref{36}, $N$ est monomialement réductible. On suppose donc que $p^{2}$ ne divise pas $N$. Ainsi, il existe un entier $m \geq 2$ tel que $N=mp$ avec $m$ et $p$ premiers entre eux. 
\\
\\Montrons que la condition de l'énoncé suffit à avoir l'existence d'un $\overline{k}$ pour lequel la solution $\overline{k}$-monomiale minimale peut être réduite à l'aide d'une solution de taille 5. On a \[M_{5}(\overline{a},\overline{k},\overline{k},\overline{k},\overline{a})=\begin{pmatrix}
    K_{5}(\overline{a},\overline{k},\overline{k},\overline{k},\overline{a}) & -K_{4}(\overline{k},\overline{k},\overline{k},\overline{a}) \\
    K_{4}(\overline{a},\overline{k},\overline{k},\overline{k})  & -K_{3}(\overline{k},\overline{k},\overline{k}) 
   \end{pmatrix}.\]
\noindent En particulier, on a \[K_{3}(\overline{k},\overline{k},\overline{k})-\overline{1}=\overline{k}^{3}-2\overline{k}-\overline{1}=(\overline{k}+\overline{1})(\overline{k}^{2}-\overline{k}-\overline{1}).\] Le discriminant de $\overline{k}^{2}-\overline{k}-\overline{1}$ est $\overline{5}$. Ainsi, si $p=5$ ou si $p \equiv \pm 1[5]$, $X^{2}-X-1$ a deux racines modulo $p$ (voir lemme \ref{33}). Soit $x+p\mathbb{Z}$ une racine de ce polynôme.
\\
\\On pose $\overline{l}=\psi(-1+m\mathbb{Z}, x+p\mathbb{Z})$ ($m$ et $p$ premiers entre eux). Comme $\psi$ est un morphisme d'anneaux unitaires, on a \[K_{3}(\overline{l},\overline{l},\overline{l})=\overline{1}.\] On pose \[\overline{a}=K_{2}(\overline{l},\overline{l})=\overline{l}^{2}-\overline{1}.\] Par le lemme \ref{34}, $(\overline{a},\overline{l},\overline{l},\overline{l},\overline{a})$ est une solution de \eqref{p}.
\\
\\Il nous reste maintenant à prouver que la solution $\overline{l}$-monomiale minimale de \eqref{p} est de taille supérieure à 6. Notons $h$ cette taille. 
\\
\\Comme les $K_{i}(\overline{l},\ldots,\overline{l})$ sont des polynômes en $\overline{l}$, on a, par le lemme chinois, que la taille de la solution $(-1+m\mathbb{Z})$-monomiale minimale de $(E_{m})$ divise $h$ et que la taille de la solution $(x+p\mathbb{Z})$-monomiale minimale de $(E_{p})$ divise $h$. Comme la	taille d'une solution $(-1+m\mathbb{Z})$-monomiale est toujours un multiple de 3, $h$ est un multiple de 3. De plus, $h \neq 3$. En effet, si $h=3$ alors la solution $(x+p\mathbb{Z})$-monomiale minimale de $(E_{p})$ est de taille 3. Or, ceci est impossible car $x+p\mathbb{Z}\neq \pm 1 +p\mathbb{Z}$ (puisque $\pm 1$ n'est pas une racine modulo $p$ de $X^{2}-X-1$).
\\
\\Ainsi, $h \geq 6$. Donc, la solution $\overline{l}$-monomiale minimale de \eqref{p} est réductible et $N$ est monomialement réductible.

\end{proof}

En utilisant des méthodes similaires, on peut également obtenir d'autres propriétés des entiers monomialement irréductibles.

\begin{proposition}
\label{39}

Soit $N \geq 2$ non premier et $p$ un nombre premier divisant $N$. Si $p \equiv \pm 1[8]$ alors $N$ est monomialement réductible.

\end{proposition}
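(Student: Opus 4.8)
The plan is to follow the template of Proposition~\ref{38}, simply replacing the size-$5$ reducing block (attached to the discriminant $5$) by a size-$6$ block attached to the discriminant $8$, i.e. to the condition that $2$ be a square modulo $p$. First, if $p^{2}\mid N$ then Proposition~\ref{36} already gives that $N$ is monomialement réductible, so I may assume $p^{2}\nmid N$ and write $N=mp$ with $m\geq 2$ and $\gcd(m,p)=1$.

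The key algebraic input is the factorization $K_{4}(\overline{k},\overline{k},\overline{k},\overline{k})+\overline{1}=(\overline{k}^{2}-\overline{1})(\overline{k}^{2}-\overline{2})$: the monomial $4$-continuant takes the value $-\overline{1}$ exactly when $\overline{k}^{2}=\overline{2}$. By the lemme~\ref{33} ii), the hypothesis $p\equiv\pm1[8]$ provides a square root $x$ of $2$ modulo $p$, with $x\not\equiv 0,\pm1$ since $2\neq 0,1$ modulo $p$. Using the isomorphism $\psi$ of the théorème~\ref{31}, I set $\overline{l}=\psi(1+m\mathbb{Z},\,x+p\mathbb{Z})$. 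Since $\psi$ is a ring morphism and the $K_{i}$ are polynomials, $K_{4}(\overline{l},\ldots,\overline{l})$ is computed componentwise: modulo $p$ one gets $(x^{2}-1)(x^{2}-2)=0$, hence $K_{4}=-1$, while modulo $m$ one gets $K_{4}(1,1,1,1)=-1$; thus $K_{4}(\overline{l},\ldots,\overline{l})=\overline{-1}$ globally. The lemme~\ref{34} (with $n=4$, $\epsilon=-1$) then yields a genuine solution $(\overline{a},\overline{l},\overline{l},\overline{l},\overline{l},\overline{a})$ of \eqref{p} of size $6$, where $\overline{a}=-K_{3}(\overline{l},\overline{l},\overline{l})$.

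Next I would bound the size $h$ of the $\overline{l}$-monomiale minimale solution by two local computations, as in Proposition~\ref{38}. Modulo $m$ the value $1$ gives $M_{3}(1,1,1)=-Id$, so the mod-$m$ minimal size is $3$ and $3\mid h$. Modulo $p$ the value $x$ satisfies $x^{2}=2$, whence $(x,x,x,x)$ is a solution (one checks $M_{4}(x,x,x,x)=-Id$); since $x\neq\pm1$ forbids any size-$3$ monomial solution modulo $p$, the mod-$p$ minimal size is exactly $4$, so $4\mid h$. Hence $12\mid h$, and in particular $h\geq 12$. With this in hand the reduction is immediate: writing
\[(\overline{l},\ldots,\overline{l})=(\overline{l-a},\overline{l},\ldots,\overline{l},\overline{l-a})\oplus(\overline{a},\overline{l},\overline{l},\overline{l},\overline{l},\overline{a})\]
(the left-hand side of size $h$, the first factor of size $h-4\geq 3$, the second of size $6$) exhibits the minimal solution as a sum of the size-$6$ solution above with a tuple which is automatically a solution, by the compatibility of $\oplus$ with \eqref{p} recalled after la définition~\ref{21}. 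Both factors have size $\geq 3$ and $\overline{l}\neq\overline{0}$, so this minimal solution is réductible and $N$ is monomialement réductible.

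I expect the only delicate point to be the size estimate. One must be sure the mod-$p$ minimal size is genuinely $4$ and not smaller, which is precisely where $x\neq\pm1$ (equivalently $2\neq 1$ modulo $p$) enters, and then that the two local sizes $3$ and $4$ combine to force $h\geq 12>6$, large enough for the size-$6$ block to leave a legitimate complementary factor of size $\geq 3$. Everything else is the same bookkeeping as in Proposition~\ref{38}, with $K_{3}$ and the discriminant $5$ replaced by $K_{4}$ and the factor $\overline{k}^{2}-\overline{2}$.
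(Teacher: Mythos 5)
Your proof is correct, and up to the decisive step it is the same as the paper's: the same reduction via Proposition \ref{36} to the case $N=mp$ with $m$ and $p$ coprime, the same factorization $K_{4}(\overline{k},\overline{k},\overline{k},\overline{k})+\overline{1}=(\overline{k}^{2}-\overline{1})(\overline{k}^{2}-\overline{2})$, the same element $\overline{l}=\psi(1+m\mathbb{Z},x+p\mathbb{Z})$, and the same size-$6$ solution from the lemme \ref{34}. Where you genuinely diverge is the lower bound on the size $h$ of the $\overline{l}$-monomiale minimale solution. The paper proves $3\mid h$, excludes $h=3$ because $x\not\equiv\pm1\pmod{p}$, and excludes $h=6$ by the explicit computation $K_{6}(x,\ldots,x)=x^{6}-5x^{4}+6x^{2}-1\equiv-1\pmod{p}$, contradicting $M_{6}=Id$; this yields $h\geq9$. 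You instead observe that $M_{4}(x,x,x,x)=-Id$ modulo $p$ (indeed $K_{2}\equiv1$, $K_{3}=x(x^{2}-2)\equiv0$, $K_{4}\equiv-1$), so the mod-$p$ minimal size is exactly $4$, whence $4\mid h$; combined with $3\mid h$ this gives $12\mid h$, hence $h\geq12$. Your variant is cleaner (no $K_{6}$ computation), gives a stronger bound than the paper's, and has a conceptual payoff: $(x,x,x,x)$ is just the known size-$4$ solution $(\overline{a},\overline{b},\overline{a},\overline{b})$ with $\overline{ab}=\overline{2}$ recalled in section \ref{EMI}. You also write out the final $\oplus$-decomposition explicitly, which the paper leaves implicit; note that by la définition \ref{23} only the size-$6$ factor needs to be a solution, so your verification that the complementary factor is one, while correct, is not even required. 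One imprecision should be repaired: you justify ``the mod-$p$ minimal size is exactly $4$'' by $x\neq\pm1$, but that hypothesis only rules out size $3$, which is not a divisor of $4$ and is therefore irrelevant once you know the minimal size divides $4$. What must actually be excluded are the divisors $1$ and $2$: size $1$ is never a solution, and a size-$2$ monomial solution forces $\overline{k}=\overline{0}$, which is ruled out because $x\not\equiv0\pmod{p}$ (a fact you did record earlier, since $x^{2}\equiv2\not\equiv0$). With that justification corrected, the argument is complete.
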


\begin{proof}

Si $p^{2} \mid N$ alors, par la proposition \ref{36}, $N$ est monomialement réductible. On suppose donc que $p^{2}$ ne divise pas $N$. Ainsi, il existe un entier $m \geq 2$ tel que $N=mp$ avec $m$ et $p$ premiers entre eux. 
\\
\\Montrons que la condition de l'énoncé suffit à avoir l'existence d'un $\overline{k}$ pour lequel la solution $\overline{k}$-monomiale minimale peut être réduite à l'aide d'une solution de taille 6. On a \[M_{6}(\overline{a},\overline{k},\overline{k},\overline{k},\overline{k},\overline{a})=\begin{pmatrix}
    K_{6}(\overline{a},\overline{k},\overline{k},\overline{k},\overline{k},\overline{a}) & -K_{5}(\overline{k},\overline{k},\overline{k},\overline{k},\overline{a}) \\
    K_{5}(\overline{a},\overline{k},\overline{k},\overline{k},\overline{k})  & -K_{4}(\overline{k},\overline{k},\overline{k},\overline{k}) 
   \end{pmatrix}.\]
\noindent En particulier, on a \[K_{4}(\overline{k},\overline{k},\overline{k},\overline{k})+\overline{1}=\overline{k}^{4}-3\overline{k^{2}}+\overline{1}+\overline{1}=(\overline{k}+\overline{1})(\overline{k}-\overline{1})(\overline{k}^{2}-\overline{2}).\]  Si $p \equiv \pm 1[8]$, $X^{2}-2$ a deux racines modulo $p$ (voir lemme \ref{33}). Soit $y+p\mathbb{Z}$ une racine de ce polynôme.
\\
\\On pose $\overline{l}=\psi(1+m\mathbb{Z}, y+p\mathbb{Z})$ ($m$ et $p$ premiers entre eux). Comme $\psi$ est un morphisme d'anneaux unitaires, on a \[K_{4}(\overline{l},\overline{l},\overline{l},\overline{l})=-\overline{1}.\] On pose \[\overline{a}=-K_{3}(\overline{l},\overline{l},\overline{l})=-\overline{l}^{3}+\overline{2l}.\] Par le lemme \ref{34}, $(\overline{a},\overline{l},\overline{l},\overline{l},\overline{l},\overline{a})$ est une solution de \eqref{p}.
\\
\\Il nous reste maintenant à prouver que la solution $\overline{l}$-monomiale minimale de \eqref{p} est de taille supérieure à 7. Notons $h$ cette taille. 
\\
\\Comme les $K_{i}(\overline{l},\ldots,\overline{l})$ sont des polynômes en $\overline{l}$, on a, par le lemme chinois, que la taille de la solution $(1+m\mathbb{Z})$-monomiale minimale de $(E_{m})$ divise $h$ et que la taille de la solution $(y+p\mathbb{Z})$-monomiale minimale de $(E_{p})$ divise $h$. Comme la solution $(1+m\mathbb{Z})$-monomiale minimale de $(E_{m})$ est de taille 3, $h$ est un multiple de 3. 
\\
\\$h \neq 3$. En effet, si $h=3$ alors la solution $(y+p\mathbb{Z})$-monomiale minimale de $(E_{p})$ est de taille 3. Or, ceci est impossible car $y+p\mathbb{Z}\neq \pm 1 +p\mathbb{Z}$ (puisque $\pm 1$ n'est pas une racine modulo $p$ de $X^{2}-2$).
\\
\\$h \neq 6$. En effet, si $h=6$ alors $M_{6}(\overline{l},\overline{l},\overline{l},\overline{l},\overline{l},\overline{l})=Id$ (puisque $-K_{4}(\overline{l},\overline{l},\overline{l},\overline{l})=\overline{1}$). Donc, par le lemme chinois, \[K=K_{6}(y+p\mathbb{Z},\ldots,y+p\mathbb{Z})=1+p\mathbb{Z}.\] Or, on a :
\begin{eqnarray*}
K &=& (y+p\mathbb{Z})^{6}-5(y+p\mathbb{Z})^{4}+6(y+p\mathbb{Z})^{2}-1+p\mathbb{Z} \\
  &=& (8+p\mathbb{Z})-5(4+p\mathbb{Z})+6(2+p\mathbb{Z})-1+p\mathbb{Z}~{\rm(car~(y+p\mathbb{Z})^{2}=2+p\mathbb{Z})}\\
	&=&-1+p\mathbb{Z}.\\
\end{eqnarray*}
\noindent Ainsi, $h \geq 7$. Donc, la solution $\overline{l}$-monomiale minimale de \eqref{p} est réductible et $N$ est monomialement réductible.

\end{proof}

\begin{remarks}

{\rm i) Par le théorème faible de la progression arithmétique de Dirichlet (voir \cite{G} proposition VII.13), il existe une infinité de nombres premiers congrus à $1$ modulo 5 et une infinité de nombres premiers congrus à $1$ modulo 8.
\\
\\ii) Les démonstration des deux propositions précédentes sont constructives.}

\end{remarks}

\noindent On donne ci-dessous un exemple d'application pour chacune des deux situations.

\begin{examples}

{\rm i) Si $N=40=8 \times 5$. On a la relation de Bézout suivante : $1=16-15=8\times 2+5 \times(-3)$. 3 est une racine modulo 5 de $X^{2}-X-1$. On pose $l=16\times 3-15\times (-1)=63$. Par la proposition \ref{38}, la solution $\overline{23}$-monomiale minimale de $(E_{40})$ est réductible. $23^{2}-1=528=40\times 13+8$, donc, on peut la réduire avec la solution $(\overline{8},\overline{23},\overline{23},\overline{23},\overline{8})$. Si on effectue les calculs, on trouve que la taille de la solution $\overline{23}$-monomiale minimale de $(E_{40})$ est égale à 30.
\\
\\ii) Si $N=56=7 \times 8$. On a la relation de Bézout suivante : $1=8-7$. 3 est une racine modulo 7 de $X^{2}-2$. On pose $l=8\times 3-7\times 1=17$. Par la proposition \ref{39}, la solution $\overline{17}$-monomiale minimale de $(E_{56})$ est réductible. $-(17^{3}-2\times 17)+56\mathbb{Z}=49+56\mathbb{Z}$, donc, on peut la réduire avec la solution $(\overline{49},\overline{17},\overline{17},\overline{17},\overline{17},\overline{49})$. Si on effectue les calculs, on trouve que la taille de la solution $\overline{17}$-monomiale minimale de $(E_{56})$ est égale à 24.}

\end{examples}

Les deux résultats précédents permettent donc d'envisager l'étude d'une infinité de nombres premiers. Cela dit, les nombres ainsi étudiés représentent-ils "beaucoup" de nombres premiers ? En d'autres termes, peut-on estimer la quantité suivante lorsque $x$ tend vers l'infini : \[D(x):=\frac{{\rm card}(\{p \in \mathbb{P}\cap [0,x],~p \equiv \pm 1[5]~{\rm ou}~p \equiv \pm 1 [8]\}}{{\rm card}(\{p \in \mathbb{P},~p \leq x\}}~?\]
\noindent Pour cela, posons \[D(a,n,x):=\frac{{\rm card}(\{p \in \mathbb{P} \cap [0,x],~p \equiv a[n]\}}{{\rm card}(\{p \in \mathbb{P},~p \leq x\}}.\] Par le principe d'inclusion-exclusion, on a :
\begin{eqnarray*}
D(x) &=& D(1,5,x)+D(-1,5,x)+D(1,8,x)+D(-1,8,x)-D(1,40,x) \\
     & & -D(-1,40,x)-D(9,40,x)-D(-9,40,x).\\
\end{eqnarray*}

\noindent Or, par le théorème de De La Vallée Poussin (version quantitative du théorème de la progression arithmétique de Dirichlet, voir \cite{DVP} et \cite{Ro} Théorème 4.8), on a, si $a$ et $n$ sont premiers entre eux, \[\lim\limits_{x\rightarrow +\infty} D(a,n,x) = \frac{1}{\varphi(n)},\] avec $\varphi$ la fonction indicatrice d'Euler. 
\\
\\Donc, \[\lim\limits_{x\rightarrow +\infty} D(x) = \frac{1}{4}+\frac{1}{4}+\frac{1}{4}+\frac{1}{4}-\frac{1}{16}-\frac{1}{16}-\frac{1}{16}-\frac{1}{16}=\frac{3}{4}.\]
\\
\\Ainsi, les propositions précédentes permettent de considérer beaucoup d'entiers composés (voir aussi annexe \ref{A}). Cependant, il reste encore plusieurs "petits" nombres premiers pour lesquels nous n'avons pas encore de résultat. Pour remédier à cela, nous allons maintenant considérer certains de ces nombres séparément, en nous concentrant, en priorité, sur ceux inférieurs à 100.

\begin{proposition}
\label{310}

Soit $N \geq 2$ non premier et $p$ un nombre premier divisant $N$. Si $p$ appartient à l'ensemble \[\{13, 43, 83, 197, 293, 307, 547, 587, 643, 757, 797, 827, 853, 883\}\] alors $N$ est monomialement réductible.

\end{proposition}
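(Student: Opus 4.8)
Le plan est de reproduire le schéma des propositions \ref{38} et \ref{39}, en remplaçant la solution réductrice de taille $5$ (resp. $6$) par une solution de taille $9$ attachée cette fois au nombre $7$. Comme dans ces preuves, le cas $p^{2}\mid N$ se règle immédiatement par la proposition \ref{36} ; on peut donc écrire $N=mp$ avec $m\geq 2$ et $m,p$ premiers entre eux. L'ingrédient arithmétique est que tous les nombres premiers de l'ensemble en jeu sont congrus à $\pm 1$ modulo $7$.

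Je commencerais par établir, grâce à la relation $K_{i}=XK_{i-1}-K_{i-2}$, l'identité
\[K_{7}(X,\ldots,X)-1=(X-1)(X^{3}+X^{2}-2X-1)(X^{3}-3X-1)\in\mathbb{Z}[X].\]
Le facteur $X^{3}+X^{2}-2X-1$ est le polynôme minimal de $2\cos(2\pi/7)$ ; son discriminant vaut $49$, de sorte que son groupe de Galois est cyclique et qu'il admet une racine modulo $p$ exactement lorsque $p\equiv\pm 1[7]$ (ce que l'on peut du reste vérifier à la main pour chacun des quatorze nombres premiers listés). Fixons une telle racine $x+p\mathbb{Z}$ ; alors $K_{7}(x,\ldots,x)=1$ dans $\mathbb{Z}/p\mathbb{Z}$. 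D'autre part, la suite $n\mapsto K_{n}(1,\ldots,1)$ étant $6$-périodique et $7\equiv 1[6]$, on a $K_{7}(1,\ldots,1)=1$ dans $\mathbb{Z}/m\mathbb{Z}$.

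Je poserais alors $\overline{l}=\psi(1+m\mathbb{Z},x+p\mathbb{Z})$ : comme $\psi$ est un morphisme d'anneaux unitaires, $K_{7}(\overline{l},\ldots,\overline{l})=\overline{1}$, et le lemme \ref{34}, appliqué avec $\epsilon=1$ et $\overline{a}=K_{6}(\overline{l},\ldots,\overline{l})$, fournit une solution $(\overline{a},\overline{l},\ldots,\overline{l},\overline{a})$ de \eqref{p} de taille $9$. Notons $h$ la taille de la solution $\overline{l}$-monomiale minimale. Par le lemme chinois, la taille de la solution $(1+m\mathbb{Z})$-monomiale minimale de $(E_{m})$, égale à $3$, divise $h$, de même que celle de la solution $(x+p\mathbb{Z})$-monomiale minimale de $(E_{p})$ ; or $x$ étant racine de $X^{3}+X^{2}-2X-1$, les racines de $X^{2}-xX+1$ sont des racines primitives $7$-ièmes de l'unité, si bien que $M_{7}(x,\ldots,x)=Id$ est la première puissance égale à $\pm Id$ et que cette dernière taille vaut $7$. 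On en déduit $21\mid h$, en particulier $h\geq 10$.

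Il resterait à écrire la réduction
\[(\overline{l},\ldots,\overline{l})=(\overline{l-a},\overline{l},\ldots,\overline{l},\overline{l-a})\oplus(\overline{a},\overline{l},\ldots,\overline{l},\overline{a}),\]
dont les deux termes sont des solutions de \eqref{p} (par la propriété de $\oplus$ rappelée après la définition \ref{21}, le second facteur étant solution), de tailles respectives $h-7\geq 3$ et $9$ : la solution $\overline{l}$-monomiale minimale est donc réductible, ce qui prouve que $N$ est monomialement réductible. L'obstacle principal, par rapport aux propositions \ref{38} et \ref{39}, est que $7$ intervient via un polynôme de degré $3$ et non plus via un symbole de Legendre : il faut remplacer le lemme \ref{33} par le critère de décomposition d'un cubique cyclique (ou une vérification directe sur les quatorze premiers) et prendre en conséquence une solution réductrice plus grande. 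Le choix de l'indice $n=7$ est d'ailleurs contraint, car le plus petit indice rendant $K_{n}(x,\ldots,x)=\pm 1$, à savoir $n=5$, donne $K_{5}(1,\ldots,1)=0$ et est incompatible avec le recollement modulo $m$ ; et la minoration $h\geq 10$ demande de contrôler précisément l'ordre modulo $p$.
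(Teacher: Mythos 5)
Your proof is correct, and its skeleton is the same as the paper's: after discarding the case $p^{2}\mid N$ by Proposition \ref{36}, you glue via $\psi$ a root of a cubic factor of $K_{7}(X,\ldots,X)\mp 1$ modulo $p$ with $\pm 1$ modulo $m$, apply Lemma \ref{34} to produce a reducing solution of size $9$, bound the minimal monomial size $h$ from below, and conclude with the explicit $\oplus$-decomposition (which the paper leaves implicit). You deviate in three sub-steps, all correctly. First, you factor $K_{7}-1=(X-1)(X^{3}+X^{2}-2X-1)(X^{3}-3X-1)$ and take $1+m\mathbb{Z}$ on the $m$-side, while the paper factors $K_{7}+1=(X+1)(X^{3}-3X+1)(X^{3}-X^{2}-2X+1)$ and takes $-1+m\mathbb{Z}$; the two are exchanged by $X\mapsto -X$ (compare Proposition \ref{spe}), so this difference is immaterial. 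Second, the paper proves that its cubic has a root modulo each of the fourteen listed primes by exhibiting a table of explicit roots $\overline{k_{p}}$, whereas you identify $X^{3}+X^{2}-2X-1$ as the minimal polynomial of $2\cos(2\pi/7)$, i.e. the defining polynomial of the cyclic cubic field $\mathbb{Q}(\zeta_{7})^{+}$, so that a root exists modulo $p$ exactly when $p\equiv\pm 1[7]$ — a condition indeed satisfied by all fourteen primes; this imports a small amount of algebraic number theory, but in exchange it explains exactly which primes this proposition can reach. Third, and most substantially, for the lower bound on $h$ the paper only shows $3\mid h$ and rules out $h\in\{3,6,9\}$ using $K_{4}(x)=x$, $K_{8}(x)=-x$ and the fact that the tabulated roots are not $0,\pm 1$; you instead observe that the eigenvalues of $M_{1}(x)$ are primitive $7$th roots of unity, so the minimal monomial size modulo $p$ is exactly $7$, whence $21\mid h$. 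This is stronger and cleaner (no table, no case analysis, no need to check $k_{p}\neq 0,\pm 1$), and it generalizes naturally, since the same eigenvalue argument computes the exact minimal size whenever the chosen root comes from a cyclotomic-type factor. Both routes establish $h\geq 10$, which is all the final reduction requires, so your argument is a valid — and in places more conceptual — alternative to the paper's computational one.
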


\begin{proof}

Comme précédemment, on peut écrire $N=mp$ avec $m$ et $p$ premiers entre eux et $m \geq 2$. Montrons que la condition de l'énoncé suffit à avoir l'existence d'un $\overline{k}$ pour lequel la solution $\overline{k}$-monomiale minimale peut être réduite à l'aide d'une solution de taille 9.
 \[K_{7}(\overline{k},\ldots,\overline{k})+\overline{1}=\overline{k}^{7}-6\overline{k^{5}}+10\overline{k}^{3}-4\overline{k}+\overline{1}=(\overline{k}+\overline{1})(\overline{k}^{3}-3\overline{k}+\overline{1})(\overline{k}^{3}-\overline{k}^{2}-2\overline{k}+\overline{1}).\]  
Le tableau suivant donne une racine modulo $p$ de $X^{3}-X^{2}-2X+1$, notée $\overline{k_{p}}$, pour chaque $p$ de la liste.

\begin{center}
\begin{tabular}{|c|c|c|c|c|c|c|c|c|}
  \hline
  $p$   & 13 & 43 & 83 & 197 & 293 & 307 & 547 & 587   \rule[-7pt]{0pt}{20pt} \\
  \hline
  $\overline{k_{p}}$  & $\overline{3}$ & $\overline{24}$ & $\overline{68}$ & $\overline{39}$ & $\overline{16}$ & $\overline{24}$ & $\overline{234}$ & $\overline{63}$  \rule[-7pt]{0pt}{20pt} \\
  \hline
\end{tabular}
\end{center}

\begin{center}
\begin{tabular}{|c|c|c|c|c|c|c|}
  \hline
  $p$  & 643 & 757 & 797 & 827 & 853 & 883 \rule[-7pt]{0pt}{20pt} \\
  \hline
  $\overline{k_{p}}$  & $\overline{245}$  & $\overline{431}$ & $\overline{363}$ & $\overline{168}$ & $\overline{153}$ & $\overline{391}$ \rule[-7pt]{0pt}{20pt} \\
  \hline
\end{tabular}
\end{center}

\noindent Soit $x$ une racine de $X^{3}-X^{2}-2X+1$. On a \[K_{4}(x,x,x,x)=x^{4}-3x^{2}+1=x^{3}+2x^{2}-x-3x^{2}+1=x^{3}-x^{2}-x+1=x\] et

\begin{eqnarray*}
K_{8}(x,\ldots,x) &=& xK_{7}(x,\ldots,x)-K_{6}(x,\ldots,x) \\
                  &=& -x-xK_{5}(x,\ldots,x)+K_{4}(x,x,x,x) \\
									&=& -x-x^{2}K_{4}(x,x,x,x)+xK_{3}(x,x,x)+x \\
									&=& -x^{3}+x^{4}-2x^{2} \\
									&=& x(x^{3}-x^{2}-2x) \\
									&=& -x. \\
\end{eqnarray*}

\noindent On pose $\overline{l}=\psi(-1+m\mathbb{Z}, k_{p}+p\mathbb{Z})$ ($m$ et $p$ premiers entre eux). Comme $\psi$ est un morphisme d'anneaux unitaires, on a \[K_{7}(\overline{l},\ldots,\overline{l})=-\overline{1}.\] On pose \[\overline{a}=-K_{6}(\overline{l},\ldots,\overline{l}).\] Par le lemme \ref{34}, $(\overline{a},\overline{l},\ldots,\overline{l},\overline{a})$ est une solution de taille 9 de \eqref{p}.
\\
\\Il nous reste maintenant à prouver que la solution $\overline{l}$-monomiale minimale de \eqref{p} est de taille supérieure à 10. Notons $h$ cette taille. Comme précédemment, $h$ est un multiple de 3. 
\begin{itemize}
\item $h \neq 3$. En effet, si $h=3$ alors la solution $(k_{p}+p\mathbb{Z})$-monomiale minimale de $(E_{p})$ est de taille 3. Or, ceci est impossible car $k_{p}+p\mathbb{Z}\neq \pm 1 +p\mathbb{Z}$.
\item $h \neq 6$. En effet, si $h=6$ alors $k_{p}+p\mathbb{Z}=K_{4}(k_{p}+p\mathbb{Z},\ldots,k_{p}+p\mathbb{Z})=\pm 1+p\mathbb{Z}$ ce qui n'est pas le cas d'après le tableau ci-dessus. 
\item $h \neq 9$. En effet, si $h=9$ alors $-k_{p}+p\mathbb{Z}=K_{8}(k_{p}+p\mathbb{Z},\ldots,k_{p}+p\mathbb{Z})=0+p\mathbb{Z}$ ce qui n'est pas le cas d'après le tableau ci-dessus. 
\\
\end{itemize}
\noindent Ainsi, $h \geq 10$. Donc, la solution $\overline{l}$-monomiale minimale de \eqref{p} est réductible et $N$ est monomialement réductible.

\end{proof}

\begin{remark}

{\rm Si $x$ est une racine de $X^{3}-3X+1$ alors $K_{8}(x,\ldots,x)=0$.}

\end{remark}

\noindent De la même façon, on a aussi le résultat suivant :

\begin{proposition}
\label{311}

Soit $N \geq 2$ non premier et $p$ un nombre premier divisant $N$. Si $p$ appartient à l'ensemble \[\{67, 373, 397, 683, 947\}\] alors $N$ est monomialement réductible.

\end{proposition}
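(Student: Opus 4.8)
The plan is to follow the constructive template of Propositions \ref{38}, \ref{39} and \ref{310}, this time reducing each monomial solution by a solution of size $11$ coming from the eleventh roots of unity. By Proposition \ref{36} I may assume $p^2 \nmid N$, and hence write $N = mp$ with $m \geq 2$ and $\gcd(m,p)=1$. The computation underlying the whole argument is the polynomial factorisation
\[K_9(k)+1=(k-1)(k^3-3k-1)(k^5+k^4-4k^3-3k^2+3k+1),\]
in which the quintic factor $Q(k)=k^5+k^4-4k^3-3k^2+3k+1$ is the minimal polynomial of $2\cos(2\pi/11)$, whose roots are the numbers $\zeta^j+\zeta^{-j}$ for $\zeta$ a primitive eleventh root of unity. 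The governing arithmetic fact is that every prime of the list satisfies $p \equiv \pm 1 [11]$ (one checks $67,397,683,947\equiv 1$ and $373\equiv -1$), so that $\mathbb{F}_p$ contains $\zeta+\zeta^{-1}$ and $Q$ admits a root $\overline{k_p}$ modulo $p$. As in Proposition \ref{310}, I would record one such root for each of the five primes in a table.

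Given such a root, I set $\overline l=\psi(1+m\mathbb{Z},\,k_p+p\mathbb{Z})$ using the isomorphism of Theorem \ref{31}. Since $K_9(\overline 1)=-\overline 1$ modulo $m$ and $K_9(\overline{k_p})=-\overline 1$ modulo $p$ (the latter precisely because $\overline{k_p}$ is a root of $Q$, hence of $K_9+1$), the Chinese remainder theorem gives $K_9(\overline l)=-\overline 1$ in $\mathbb{Z}/N\mathbb{Z}$. Lemma \ref{34}, applied with $n=9$ and $\overline a=-K_8(\overline l)$, then furnishes the solution $(\overline a,\overline l,\ldots,\overline l,\overline a)$ of size $11$.

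To finish I must check that the $\overline l$-monomial minimal solution has size $h\geq 12$; granting this, the identity
\[(\overline l,\ldots,\overline l)=(\overline{l-a},\overline l,\ldots,\overline l,\overline{l-a})\oplus(\overline a,\overline l,\ldots,\overline l,\overline a),\]
with both summands of size at least $3$, exhibits that solution as reducible and proves that $N$ is monomially reducible. As in the earlier proofs, the Chinese remainder theorem forces the minimal monomial sizes modulo $m$ and modulo $p$ to divide $h$. The first is $3$ (the value $\overline 1$ has order $3$, i.e. $K_2(\overline 1)=\overline 0$), and the second is exactly $11$, since a root $\overline{k_p}$ of the separable polynomial $Q$ is of the form $\zeta+\zeta^{-1}$ for a genuinely primitive eleventh root of unity, so that none of $K_2,K_5,K_8$ vanishes at $\overline{k_p}$. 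Hence $\operatorname{lcm}(3,11)=33$ divides $h$, giving $h\geq 12$.

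The delicate point is this last size estimate, and in particular the insistence that $\overline{k_p}$ be a root of the quintic $Q$ and not of the cubic $k^3-3k-1$ occurring in the same factorisation: the latter's roots are $2\cos(\pi/9),2\cos(5\pi/9),2\cos(7\pi/9)$, which have order $9$ and would only yield $h=\operatorname{lcm}(3,9)=9<11$, making the size-$11$ reduction illegitimate. Thus the heart of the proof is to certify, for each listed prime, that the tabulated root really has order $11$ (equivalently that $K_2,K_5,K_8$ do not vanish there), which rules out the parasitic values $h\in\{3,6,9\}$. Verifying the displayed degree-nine factorisation and the congruences $p \equiv \pm 1 [11]$ are then the only routine computations left.
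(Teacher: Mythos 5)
Your proof is correct, and its skeleton is the paper's: write $N=mp$ with $\gcd(m,p)=1$, factor $K_9(k)+\overline{1}$, take a root $k_p$ of the quintic factor modulo $p$, set $\overline{l}=\psi(1+m\mathbb{Z},k_p+p\mathbb{Z})$, produce a size-$11$ solution via Lemma \ref{34}, and show the minimal monomial size $h$ satisfies $h\geq 12$. Where you genuinely diverge is in the two certification steps. The paper is purely computational: it tabulates, for each of the five primes, an explicit root $\overline{k_p}$ of $X^5+X^4-4X^3-3X^2+3X+1$ together with the values $K_4(\overline{k_p},\ldots,\overline{k_p})$ and $K_8(\overline{k_p},\ldots,\overline{k_p})$, and excludes $h\in\{3,6,9\}$ by inspecting that table ($k_p\neq\pm1$, $K_4\neq\pm1$, $K_8\neq 0$). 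You instead invoke the cyclotomic interpretation: the quintic is the minimal polynomial of $2\cos(2\pi/11)$, it acquires roots modulo $p$ because each listed prime is $\equiv\pm1\ [11]$ (splitting in $\mathbb{Q}(\zeta_{11})^{+}$), and every such root is $\eta+\eta^{-1}$ with $\eta$ a primitive eleventh root of unity, so that $K_n(\overline{k_p})=(\eta^{n+1}-\eta^{-n-1})/(\eta-\eta^{-1})$ vanishes exactly when $11\mid n+1$; hence the monomial size modulo $p$ is exactly $11$ and $33\mid h$. This buys generality and insight — your argument proves the statement for \emph{every} prime $p\equiv\pm1\ [11]$, $p\neq 11$, and explains where the list comes from — at the price of importing standard but unproved facts about splitting in the real cyclotomic field; the paper's tables are self-contained certificates checkable by direct arithmetic, which matters since the paper later turns this method into a computer search. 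Two small points: the relevant threshold for reducing by a size-$11$ solution is $h\geq 12$ (the complementary factor must have size $h+2-11\geq 3$), so a root of the parasitic cubic $k^3-3k-1$ fails because it gives $h=9<12$, not because $9<11$; and your explicit reduction identity $(\overline{l},\ldots,\overline{l})=(\overline{l-a},\overline{l},\ldots,\overline{l},\overline{l-a})\oplus(\overline{a},\overline{l},\ldots,\overline{l},\overline{a})$, which the paper leaves implicit, is a welcome addition since only the right-hand summand needs to be a solution under Définition \ref{23}.
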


\begin{proof}

Comme précédemment, on peut écrire $N=mp$ avec $m$ et $p$ premiers entre eux et $m \geq 2$. Montrons que la condition de l'énoncé suffit à avoir l'existence d'un $\overline{k}$ pour lequel la solution $\overline{k}$-monomiale minimale peut être réduite à l'aide d'une solution de taille 11.
 \begin{eqnarray*}
K_{9}(\overline{k},\ldots,\overline{k})+\overline{1} &=& \overline{k}^{9}-8\overline{k^{7}}+21\overline{k}^{5}-20\overline{k}^{3}+5\overline{k}+\overline{1} \\
                                                     &=& (\overline{k}-\overline{1})(\overline{k}^{3}-3\overline{k}-\overline{1})(\overline{k}^{5}+\overline{k}^{4}-4\overline{k}^{3}-3\overline{k}^{2}+3\overline{k}+\overline{1}).\\
\end{eqnarray*}
																											
\noindent Le tableau suivant donne une racine modulo $p$ de $X^{5}+X^{4}-4X^{3}-3X^{2}+3X+1$, notée $\overline{k_{p}}$, pour chaque $p$ de la liste.

\begin{center}
\begin{tabular}{|c|c|c|c|c|c|}
  \hline
  $p$  & 67 & 373 & 397 & 683 & 947  \rule[-7pt]{0pt}{20pt} \\
  \hline
  $\overline{k_{p}}$  & $\overline{17}$ & $\overline{46}$ & $\overline{95}$ & $\overline{32}$ & $\overline{129}$  \rule[-7pt]{0pt}{20pt} \\
  \hline
	$K_{4}(\overline{k_{p}},\ldots,\overline{k_{p}})$ & $\overline{44}$ & $\overline{331}$ & $\overline{42}$ & $\overline{515}$ & $\overline{463}$     \rule[-7pt]{0pt}{20pt} \\
	\hline
	$K_{8}(\overline{k_{p}},\ldots,\overline{k_{p}})$ & $\overline{50}$ & $\overline{327}$ & $\overline{302}$ & $\overline{651}$ & $\overline{818}$      \rule[-7pt]{0pt}{20pt} \\
  \hline
\end{tabular}
\end{center}

\noindent On pose $\overline{l}=\psi(1+m\mathbb{Z}, k_{p}+p\mathbb{Z})$ ($m$ et $p$ premiers entre eux). Comme $\psi$ est un morphisme d'anneaux unitaires, on a \[K_{9}(\overline{l},\ldots,\overline{l})=-\overline{1}.\] On pose \[\overline{a}=-K_{8}(\overline{l},\ldots,\overline{l}).\] Par le lemme \ref{34}, $(\overline{a},\overline{l},\ldots,\overline{l},\overline{a})$ est une solution de taille 11 de \eqref{p}.
\\
\\Il nous reste maintenant à prouver que la solution $\overline{l}$-monomiale minimale de \eqref{p} est de taille supérieure à 12. Notons $h$ cette taille. Comme précédemment, $h$ est un multiple de 3. 
\begin{itemize}
\item $h \neq 3$. En effet, si $h=3$ alors la solution $(k_{p}+p\mathbb{Z})$-monomiale minimale de $(E_{p})$ est de taille 3. Or, ceci est impossible car $k_{p}+p\mathbb{Z}\neq \pm 1 +p\mathbb{Z}$.
\item $h \neq 6$. En effet, si $h=6$ alors $K_{4}(k_{p}+p\mathbb{Z},\ldots,k_{p}+p\mathbb{Z})=\pm 1+p\mathbb{Z}$ ce qui n'est pas le cas d'après le tableau ci-dessus. 
\item $h \neq 9$. En effet, si $h=9$ alors $K_{8}(k_{p}+p\mathbb{Z},\ldots,k_{p}+p\mathbb{Z})=0+p\mathbb{Z}$ ce qui n'est pas le cas d'après le tableau ci-dessus. 
\\
\end{itemize}
\noindent Ainsi, $h \geq 12$. Donc, la solution $\overline{l}$-monomiale minimale de \eqref{p} est réductible et $N$ est monomialement réductible.

\end{proof}

\noindent On termine par les deux résultats ci-dessous :

\begin{proposition}
\label{312}

Soit $N \geq 2$ non premier et $p$ un nombre premier divisant $N$. Si $p$ appartient à l'ensemble \[\{53, 157, 443, 467, 677\}\] alors $N$ est monomialement réductible.

\end{proposition}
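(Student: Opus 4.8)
The proof would continue the pattern of Propositions \ref{38}--\ref{311}, reducing with a solution of size $15$ manufactured from the thirteenth continuant. As in those proofs, Proposition \ref{36} lets me assume $N = mp$ with $m \geq 2$ and $\gcd(m,p) = 1$. The five listed primes are precisely the small primes with $p \equiv \pm 1 \pmod{13}$ that escaped Propositions \ref{38}, \ref{39}, \ref{310} and \ref{311}, and it is this congruence that drives the construction.

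The first step is to record the factorisation over $\mathbb{Z}$
\[ K_{13}(\overline{k},\ldots,\overline{k}) + \overline{1} = (\overline{k}+\overline{1})\,(\overline{k}^{2}+\overline{k}-\overline{1})\,P_4(\overline{k})\,P_6(\overline{k}), \]
where $P_6$ is the degree-$6$ minimal polynomial of $2\cos(\pi/13)$ and $P_4$ the degree-$4$ minimal polynomial of $2\cos(2\pi/15)$. The factor $P_6$ acquires a root in $\mathbb{Z}/p\mathbb{Z}$ exactly when $p \equiv \pm 1 \pmod{13}$, so for each listed prime I would tabulate such a root $\overline{k_p}$. Putting $\overline{l} = \psi(-1+m\mathbb{Z},\,k_p+p\mathbb{Z})$ and using that $\psi$ is a ring homomorphism, together with $K_{13}(-1,\ldots,-1) = -1$ and $P_6(k_p) \equiv 0 \pmod p$, gives $K_{13}(\overline{l},\ldots,\overline{l}) = -\overline{1}$. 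By Lemma \ref{34} this produces a solution $(\overline{a},\overline{l},\ldots,\overline{l},\overline{a})$ of size $15$, with $\overline{a} = -K_{12}(\overline{l},\ldots,\overline{l})$.

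It then remains to prove that the $\overline{l}$-monomial minimal solution has size $h \geq 16$; the size-$15$ solution will then reduce it, the complementary summand having size $h - 13 \geq 3$. By the Chinese remainder theorem applied to the continuants $K_i(\overline{l},\ldots,\overline{l})$, $h$ is a common multiple of the size of the $(-1+m\mathbb{Z})$-monomial minimal solution of $(E_m)$, which is $3$ because $K_2(-1) = 0$, and of the $(k_p+p\mathbb{Z})$-monomial minimal solution of $(E_p)$. For the latter, $\overline{k_p} = \zeta + \zeta^{-1}$ for a primitive $26$th root of unity $\zeta$ in an algebraic closure of $\mathbb{Z}/p\mathbb{Z}$ (available since $p \equiv \pm 1 \pmod{26}$), so $K_{n-1}(\overline{k_p},\ldots,\overline{k_p}) = \frac{\zeta^{n}-\zeta^{-n}}{\zeta-\zeta^{-1}}$ vanishes if and only if $13 \mid n$, making that size equal to $13$. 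Hence $39 \mid h$, so $h \geq 16$ and the reduction goes through, establishing that $N$ is monomialement réductible. In the explicit spirit of Propositions \ref{310} and \ref{311} one may instead rule out $h \in \{3,6,9,12,15\}$ individually, from a table of the residues $K_3(\overline{k_p}), K_4(\overline{k_p}), K_8(\overline{k_p}), K_{11}(\overline{k_p})$ and $K_{14}(\overline{k_p})$.

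The difficulty here is computational, not conceptual: the factorisation of the degree-$13$ polynomial $K_{13}+\overline{1}$ and the prime-by-prime exhibition of a root $\overline{k_p}$ of $P_6$ are the substance of the argument. The one genuinely new point, absent for the shorter reducing solutions, is that the values $h = 12$ and $h = 15$ each leave two possible mod-$p$ sizes ($4$ or $12$, respectively $5$ or $15$); the divisibility argument above disposes of both at once, whereas the explicit table must be correspondingly longer than those in Propositions \ref{310} and \ref{311}.
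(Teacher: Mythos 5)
Your proof is correct, and its skeleton is the paper's: the same factorisation of $K_{13}(\overline{k},\ldots,\overline{k})+\overline{1}$, the same sextic factor (the paper's $\overline{k}^{6}-\overline{k}^{5}-5\overline{k}^{4}+4\overline{k}^{3}+6\overline{k}^{2}-3\overline{k}-\overline{1}$ is your $P_{6}$), the same element $\overline{l}=\psi(-1+m\mathbb{Z},k_{p}+p\mathbb{Z})$, and the same size-$15$ reducing solution obtained from Lemma \ref{34}. Where you genuinely diverge is in the two verification steps. The paper simply lists the five primes, tabulates an explicit root $\overline{k_{p}}$ of the sextic for each, and then excludes $h\in\{3,6,9,12,15\}$ case by case with a further table of $K_{4}$, $K_{8}$, $K_{10}$, $K_{14}$ at $\overline{k_{p}}$. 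You instead recognise $P_{6}$ as the minimal polynomial of $2\cos(\pi/13)$, so that a root exists modulo $p$ exactly when $p\equiv\pm1\pmod{13}$ (its splitting field $\mathbb{Q}(\zeta_{26}+\zeta_{26}^{-1})$ is Galois over $\mathbb{Q}$), and you write $\overline{k_{p}}=\zeta+\zeta^{-1}$ with $\zeta$ a primitive $26$-th root of unity over $\mathbb{F}_{p}$ --- legitimate, since the identity $z^{6}P_{6}(z+z^{-1})=\Phi_{26}(z)$ reduces modulo $p$ and $p\nmid 26$ --- whence $K_{n-1}(\overline{k_{p}},\ldots,\overline{k_{p}})=(\zeta^{n}-\zeta^{-n})/(\zeta-\zeta^{-1})$ vanishes if and only if $13\mid n$. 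This yields $13\mid h$, hence $39\mid h$, disposing of all forbidden sizes at once with no tables; it also explains the provenance of the prime list (exactly the primes $\equiv\pm1\pmod{13}$ left over from Propositions \ref{38}--\ref{311}), extends the statement to every prime $p\equiv\pm1\pmod{13}$, and incidentally accounts for the paper's table identity $K_{10}(\overline{k_{p}})=\overline{k_{p}}$. What the paper's version buys in exchange is elementarity: finite arithmetic checks with no cyclotomic input, in keeping with Propositions \ref{310} and \ref{311}. Two minor slips that do not affect your argument: a primitive $26$-th root of unity exists in $\overline{\mathbb{F}_{p}}$ for every $p\nmid 26$ (the congruence on $p$ is what puts $\zeta+\zeta^{-1}$ in $\mathbb{F}_{p}$, not what makes $\zeta$ available), and your closing remark that $h=12$ and $h=15$ each leave two possible mod-$p$ sizes is beside the point, since the single necessary conditions $K_{11}\equiv 0$, respectively $K_{14}\equiv 0$, already suffice for a table-based exclusion.
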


\begin{proof}

Comme précédemment, on peut écrire $N=mp$ avec $m$ et $p$ premiers entre eux et $m \geq 2$. Montrons que la condition de l'énoncé suffit à avoir l'existence d'un $\overline{k}$ pour lequel la solution $\overline{k}$-monomiale minimale peut être réduite à l'aide d'une solution de taille 15.
 \begin{eqnarray*}
K_{13}(\overline{k},\ldots,\overline{k})+\overline{1} &=& \overline{k}^{13}-12\overline{k^{11}}+55\overline{k}^{9}-120\overline{k}^{7}+126\overline{k}^{5}-56\overline{k}^{3}+7\overline{k}+\overline{1} \\
                                                      &=& (\overline{k}+\overline{1})(\overline{k}^{2}+\overline{k}-\overline{1})(\overline{k}^{4}-\overline{k}^{3}-4\overline{k}^{2}+4\overline{k}+\overline{1})\\
																										  & &(\overline{k}^{6}-\overline{k}^{5}-5\overline{k}^{4}+4\overline{k}^{3}+6\overline{k}^{2}-3\overline{k}-\overline{1}).\\
\end{eqnarray*}
																											
\noindent Le tableau suivant donne une racine modulo $p$ de $X^{6}-X^{5}-5X^{4}+4X^{3}+6X^{2}-3X-1$, notée $\overline{k_{p}}$, pour chaque $p$ de la liste.

\begin{center}
\begin{tabular}{|c|c|c|c|c|c|}
  \hline
  $p$  & 53 & 157 & 443 & 467 & 677  \rule[-7pt]{0pt}{20pt} \\
  \hline
  $\overline{k_{p}}$  & $\overline{15}$ & $\overline{12}$ & $\overline{17}$ & $\overline{28}$ & $\overline{98}$  \rule[-7pt]{0pt}{20pt} \\
  \hline
	$K_{4}(\overline{k_{p}},\ldots,\overline{k_{p}})$ & $\overline{25}$ & $\overline{52}$ & $\overline{257}$ & $\overline{68}$ & $\overline{605}$     \rule[-7pt]{0pt}{20pt} \\
	\hline
	$K_{8}(\overline{k_{p}},\ldots,\overline{k_{p}})$ & $\overline{6}$ & $\overline{134}$ & $\overline{6}$ & $\overline{414}$ & $\overline{643}$      \rule[-7pt]{0pt}{20pt} \\
  \hline
	$K_{10}(\overline{k_{p}},\ldots,\overline{k_{p}})$ & $\overline{15}$ & $\overline{12}$ & $\overline{17}$ & $\overline{28}$ & $\overline{98}$      \rule[-7pt]{0pt}{20pt} \\
  \hline
	$K_{14}(\overline{k_{p}},\ldots,\overline{k_{p}})$ & $\overline{38}$ & $\overline{145}$ & $\overline{426}$ & $\overline{439}$ & $\overline{579}$      \rule[-7pt]{0pt}{20pt} \\
  \hline
\end{tabular}
\end{center}

\noindent On pose $\overline{l}=\psi(-1+m\mathbb{Z}, k_{p}+p\mathbb{Z})$ ($m$ et $p$ premiers entre eux). Comme $\psi$ est un morphisme d'anneaux unitaires, on a \[K_{13}(\overline{l},\ldots,\overline{l})=-\overline{1}.\] On pose \[\overline{a}=-K_{12}(\overline{l},\ldots,\overline{l}).\] Par le lemme \ref{34}, $(\overline{a},\overline{l},\ldots,\overline{l},\overline{a})$ est une solution de taille 15 de \eqref{p}.
\\
\\Il nous reste maintenant à prouver que la solution $\overline{l}$-monomiale minimale de \eqref{p} est de taille supérieure à 16. Notons $h$ cette taille. Comme précédemment, $h$ est un multiple de 3. D'après le tableau ci-dessus $h \notin \{3, 6, 9, 12, 15\}$. 
\\
\\Ainsi, $h \geq 16$. Donc, la solution $\overline{l}$-monomiale minimale de \eqref{p} est réductible et $N$ est monomialement réductible.

\end{proof}

\begin{proposition}
\label{313}

Soit $N \geq 2$ non premier et $p$ un nombre premier divisant $N$. Si $p$ appartient à l'ensemble $\{37, 227\}$ alors $N$ est monomialement réductible.

\end{proposition}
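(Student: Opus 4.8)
The plan is to follow verbatim the strategy of Propositions \ref{310}, \ref{311} and \ref{312}, reducing this time a monomial minimal solution by an auxiliary solution of size $21$ extracted from the factorisation of $K_{19}(\overline{k},\ldots,\overline{k})+\overline{1}$. By Proposition \ref{36} I may assume $p^{2}\nmid N$, so that $N=mp$ with $m\geq 2$ and $\gcd(m,p)=1$; the map $\psi$ of Théorème \ref{31} will then glue a suitable residue modulo $p$ with $\overline{-1}$ modulo $m$.

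First I would record a factorisation of the shape
\[K_{19}(\overline{k},\ldots,\overline{k})+\overline{1}=(\overline{k}+\overline{1})(\overline{k}^{3}+\overline{k}^{2}-2\overline{k}-\overline{1})\,Q_{6}(\overline{k})\,Q_{9}(\overline{k}),\]
where $Q_{9}$ is the degree-$9$ factor obtained by reducing modulo $N$ the minimal polynomial of $2\cos(\pi/19)$ and $Q_{6}$ has degree $6$. The essential computational input is that $Q_{9}$ has a root $\overline{k_{p}}$ modulo $p$ for each $p\in\{37,227\}$; this is exactly why these two primes must be treated apart, for $37\equiv 227\equiv -1\;[38]$ ensures that $2\cos(\pi/19)\in\mathbb{F}_{p}$. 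I would then display $\overline{k_{p}}$ in a table together with the residues of $K_{4}$, $K_{8}$, $K_{10}$ and $K_{14}$ evaluated at $(\overline{k_{p}},\ldots,\overline{k_{p}})$, which are the data used at the very end.

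Put $\overline{l}=\psi(-1+m\mathbb{Z},\,k_{p}+p\mathbb{Z})$. Since $\psi$ is a unital ring morphism, $K_{19}(\overline{-1},\ldots,\overline{-1})=\overline{-1}$ (from the period-$6$ behaviour of the monomial continuants at $-1$), and $K_{19}(\overline{k_{p}},\ldots,\overline{k_{p}})=\overline{-1}$ (as $\overline{k_{p}}$ is a root of $K_{19}+\overline{1}$), one gets $K_{19}(\overline{l},\ldots,\overline{l})=\overline{-1}$. Lemma \ref{34}, with $\overline{a}=-K_{18}(\overline{l},\ldots,\overline{l})$, then produces a solution $(\overline{a},\overline{l},\ldots,\overline{l},\overline{a})$ of \eqref{p} of size $21$. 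It remains to prove that the $\overline{l}$-monomial minimal solution has size $h>21$, so that this size-$21$ solution yields a proper reduction. As in the earlier proofs, Théorème \ref{31} forces $3\mid h$, the $(-1+m\mathbb{Z})$-monomial minimal solution of $(E_{m})$ being of size $3$; it therefore suffices to rule out $h\in\{3,6,9,12,15,18,21\}$. Each case is settled by one continuant at $\overline{k_{p}}$: $h=3$ is excluded since $\overline{k_{p}}\neq\pm\overline{1}$; for $h\in\{6,12,18\}$ the relation $M_{h}(\overline{k_{p}},\ldots,\overline{k_{p}})=\pm Id$ forces $K_{h-2}(\overline{k_{p}},\ldots,\overline{k_{p}})=\pm\overline{1}$, contradicted by the tabulated $K_{4}$, $K_{10}$ and by the identity $K_{16}(\overline{k_{p}},\ldots,\overline{k_{p}})=\overline{k_{p}}$; for $h\in\{9,15,21\}$ the vanishing of the off-diagonal entry forces $K_{h-1}(\overline{k_{p}},\ldots,\overline{k_{p}})=\overline{0}$, contradicted by the tabulated $K_{8}$, $K_{14}$ and by the identity $K_{20}(\overline{k_{p}},\ldots,\overline{k_{p}})=-\overline{k_{p}}$. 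Hence $h\geq 24>21$, the solution is reducible, and $N$ is monomialement réductible.

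The sole real difficulty is computational: producing an explicit root $\overline{k_{p}}$ of $Q_{9}$ modulo $37$ and $227$ and certifying the tabulated continuants (the degree of $Q_{9}$ is $9$ and the continuant $K_{14}$ is already bulky), the two identities $K_{16}=\overline{k_{p}}$ and $K_{20}=-\overline{k_{p}}$ being the analogues of those used in Propositions \ref{310}--\ref{312}. Conceptually nothing new intervenes; the substantive point is why the reducing size must jump from $15$ to $21$. A proper reduction needs $h=\operatorname{lcm}(3,d)>s$, where $d$ is the monomial size of $\overline{k_{p}}$ modulo $p$ and $s$ the reducing size; for $37$ and $227$ every admissible value with $d\in\{2,3,6,9,18\}$ satisfies $h\leq s$, and the first value escaping this is $2\cos(\pi/19)$, of monomial size $d=19$, which gives $h=57$ and forces $s=21$.
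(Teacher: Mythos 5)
Your proposal is correct and takes essentially the same approach as the paper: the same factorisation of $K_{19}(\overline{k},\ldots,\overline{k})+\overline{1}$ with its degree-$9$ factor, the same gluing $\overline{l}=\psi(-1+m\mathbb{Z},\,k_{p}+p\mathbb{Z})$ via le lemme chinois, Lemme \ref{34} producing the size-$21$ solution, and the exclusion of the candidate sizes $h\in\{3,6,9,12,15,18,21\}$ by continuant values modulo $p$. The only harmless differences are bookkeeping: the paper exhibits the roots $\overline{3}$ (mod $37$) and $\overline{8}$ (mod $227$) and tabulates $K_{3j}(\overline{k_{p}},\ldots,\overline{k_{p}})$ for $j\leq 7$, checking that none equals $\pm\overline{1}$, whereas you tabulate $K_{4},K_{8},K_{10},K_{14}$ and invoke the (true, and easily verified via Chebyshev identities) relations $K_{16}=\overline{k_{p}}$, $K_{20}=-\overline{k_{p}}$, and you additionally justify the existence of $\overline{k_{p}}$ conceptually via $37\equiv 227\equiv -1~[38]$, a point the paper leaves implicit.
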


\begin{proof}

Comme précédemment, on peut écrire $N=mp$ avec $m$ et $p$ premiers entre eux et $m \geq 2$. Montrons que la condition de l'énoncé suffit à avoir l'existence d'un $\overline{k}$ pour lequel la solution $\overline{k}$-monomiale minimale peut être réduite à l'aide d'une solution de taille 21.

\begin{eqnarray*}
K_{19}(\overline{k},\ldots,\overline{k})+\overline{1} &=& \overline{k}^{19}-18 \overline{k}^{17}+136 \overline{k}^{15}-560 \overline{k}^{13}+1365 \overline{k}^{11}-2002 \overline{k}^{9}+1716 \overline{k}^{7}\\
                                                      & &  -792 \overline{k}^{5}+165\overline{k}^{3}-10 \overline{k}+\overline{1} \\
                                                      &=& (\overline{k}+\overline{1})(\overline{k}^{3}+\overline{k}^{2}-2\overline{k}-\overline{1})(\overline{k}^{6}-\overline{k}^{5}-6\overline{k}^{4}+6\overline{k}^{3}+8\overline{k}^{2}-8\overline{k}+\overline{1}) \\
																											& & (\overline{k}^{9}-\overline{k}^{8}-8\overline{k}^{7}+7\overline{k}^{6}+21\overline{k}^{5}-15\overline{k}^{4}-20\overline{k}^{3}+10\overline{k}^{2}+5\overline{k}-\overline{1}).\\
\end{eqnarray*}
																											
Le tableau suivant donne une racine modulo $p$, que l'on note $\overline{k_{p}}$, pour chaque $p$ de la liste, du polynôme $X^{9}-X^{8}-8X^{7}+7X^{6}+21X^{5}-15X^{4}-20X^{3}+10X^{2}+5X-1$ ainsi que les $K_{i}(\overline{k_{p}},\ldots,\overline{k_{p}})$.

\begin{center}
\begin{tabular}{|c|c|c|c|c|c|c|c|c|}
  \hline
  \multicolumn{1}{|c|}{\backslashbox{$p$}{\vrule width 0pt height 1.25em$K_{i}$}} & $\overline{k_{p}}$ & $K_{3}$ & $K_{6}$ & $K_{9}$ & $K_{12}$ & $K_{15}$ & $K_{18}$ & $K_{21}$ \rule[-7pt]{0pt}{20pt} \\
  \hline
  37  & $\overline{3}$ & $\overline{21}$ & $\overline{7}$ & $\overline{31}$ & $\overline{33}$ & $\overline{8}$ & $\overline{0}$ & $\overline{29}$ \rule[-7pt]{0pt}{20pt} \\
  \hline
	227 & $\overline{8}$ & $\overline{42}$ & $\overline{65}$ & $\overline{125}$ & $\overline{99}$ & $\overline{63}$ & $\overline{0}$ & $\overline{164}$     \rule[-7pt]{0pt}{20pt} \\
	\hline
\end{tabular}
\end{center}

\noindent On pose $\overline{l}=\psi(-1+m\mathbb{Z}, k_{p}+p\mathbb{Z})$ ($m$ et $p$ premiers entre eux). Comme $\psi$ est un morphisme d'anneaux unitaires, on a \[K_{19}(\overline{l},\ldots,\overline{l})=-\overline{1}.\] On pose \[\overline{a}=-K_{18}(\overline{l},\ldots,\overline{l}).\] Par le lemme \ref{34}, $(\overline{a},\overline{l},\ldots,\overline{l},\overline{a})$ est une solution de taille 21 de \eqref{p}.
\\
\\Il nous reste maintenant à prouver que la solution $\overline{l}$-monomiale minimale de \eqref{p} est de taille supérieure à 22. Notons $h$ cette taille. Comme précédemment, $h$ est un multiple de 3. D'après le tableau ci-dessus $h \notin \{3, 6, 9, 12, 15, 18, 21\}$. 
\\
\\Ainsi, $h \geq 22$. Donc, la solution $\overline{l}$-monomiale minimale de \eqref{p} est réductible et $N$ est monomialement réductible.

\end{proof}

Pour démontrer le point iii) du théorème \ref{27}, il ne nous reste plus qu'à considérer les nombres impairs divisibles par 3.

\subsection{Le cas des nombres pairs et des multiples de 3}
\label{NP}

Les résultats présentés ci-dessus nous ont permis d'étudier la plupart des petits nombres premiers. Cela dit, 2 et 3 n'ont toujours pas été considérés (sauf dans la proposition \ref{37}). Bien entendu, il est impossible d'avoir des résultats similaires aux propositions \ref{310} à \ref{313} pour ces deux nombres puisqu'on a, par exemple, 24 monomialement irréductible (proposition \ref{35}). Notre objectif est donc d'essayer de caractériser les multiples de 2 et 3 monomialement irréductibles. 
\\
\\Afin d'alléger les notations, on pose, à partir de maintenant, $K_{n}(\overline{l}):=K_{n}(\overline{l},\ldots,\overline{l})$. On commence par le résultat technique ci-dessous :

\begin{lemma}
\label{314}

Soient $k$ et $m$ deux entiers naturels supérieurs à 2 et $N=km$. On suppose que $m \equiv \pm l [k]$ et que $l^{2} \equiv 1 [k]$. Soit $n=6q+r$ avec $q$ entier naturel et $r \in [\![0;5]\!]$.
\\
\\i) Supposons $m \equiv -l [k]$. On a $K_{n}(\overline{lm+2})=K_{r}(\overline{lm+2})+\overline{6q(lm+1)}.$ 
\\
\\ \noindent ii) Supposons $m \equiv l [k]$. On a $K_{n}(\overline{lm-2})=K_{r}(\overline{lm-2})+\overline{(-1)^{n+1}6q(lm-1)}.$ 
\\
\\ \noindent iii) Supposons, de plus, $k \geq 3$, $m$ impair et non divisible par 3. Si $m \equiv -l [k]$ (resp. $m \equiv l [k]$) alors La solution $\overline{lm+2}$ (resp. $\overline{lm-2}$) -monomiale minimale de \eqref{p} est de taille $6m$.

\end{lemma}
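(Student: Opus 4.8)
The plan is to read off the entire sequence $\big(K_n(\overline c)\big)_{n}$ from the powers of a single $2\times2$ matrix. By the determinantal identity recalled just before Lemma~\ref{34},
\[
M_n(\overline c,\ldots,\overline c)=\begin{pmatrix} K_n(\overline c) & -K_{n-1}(\overline c)\\ K_{n-1}(\overline c) & -K_{n-2}(\overline c)\end{pmatrix}=A^{\,n},\qquad A:=\begin{pmatrix}\overline c & -\overline 1\\ \overline 1 & \overline 0\end{pmatrix},
\]
so that i) and ii) reduce to understanding $A^{6}$. First I would turn the arithmetic hypotheses into identities in $\mathbb Z/N\mathbb Z$. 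Writing $u=\overline{lm}$, the hypotheses $l^{2}\equiv1\,[k]$ and $m\equiv-l\,[k]$ yield at once $u^{2}=-u$ and $\overline m\,(u+\overline 1)=\overline 0$ (the latter because $k\mid lm+1$, since $lm\equiv-l^{2}\equiv-1\,[k]$), hence also $u(u+\overline 1)=\overline 0$; in case ii, $m\equiv l\,[k]$ gives instead $u^{2}=u$ and $\overline m\,(u-\overline 1)=\overline 0$.

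For i), set $\overline c=u+\overline 2$. Using $u^{2}=-u$ to reduce the powers $c^{j}$, the six continuants $K_0,\dots,K_6$ become explicit affine functions of $u$, and a direct computation gives
\[
A^{6}=I+\overline 6\,(u+\overline 1)\,P,\qquad P=\begin{pmatrix}\overline 1 & -\overline 1\\ \overline 1 & -\overline 1\end{pmatrix},\qquad P^{2}=0 .
\]
Since $P^{2}=0$, one gets $A^{6q}=I+\overline{6q}\,(u+\overline 1)P$, and multiplying by $A^{r}$ yields $K_{6q+r}(\overline c)=K_r(\overline c)+\overline{6q}\,(u+\overline 1)\big(K_r(\overline c)-K_{r-1}(\overline c)\big)$. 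Checking on the six explicit values that $K_r(\overline c)-K_{r-1}(\overline c)-\overline 1$ is a multiple of $u$, the correction collapses by $u(u+\overline 1)=\overline 0$ to $\overline{6q}(u+\overline 1)=\overline{6q(lm+1)}$, which is i). Part ii) is identical after replacing $\overline c$ by $u-\overline 2$ and $P$ by $\begin{pmatrix}-\overline 1&-\overline 1\\ \overline 1&\overline 1\end{pmatrix}$; the alternating factor $(-1)^{n+1}$ appears because here $\overline c\equiv-\overline 2\,[m]$.

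For iii) the decisive point is the correct size criterion: a constant $n$-tuple solves \eqref{p} if and only if $A^{n}=\pm Id$, that is $K_{n-1}(\overline c)=\overline 0$ \emph{and} $K_{n}(\overline c)=\pm\overline 1$. Over the composite modulus $N$ the second condition is not automatic: $K_{n-1}(\overline c)=\overline 0$ only forces $A^{n}=s\,Id$ with $s^{2}=\overline 1$, and $s$ need not be $\pm\overline 1$. I would then use the reductions modulo the divisors $m$ and $k$ of $N$. Modulo $m$, where $\overline c\equiv\pm\overline 2$, one has $K_j(\overline c)\equiv\pm(j+1)$, so $K_{n-1}(\overline c)$ vanishes iff $m\mid n$; modulo $k$, where $\overline c\equiv\pm\overline 1$, the sequence $\big(K_j(\overline c)\big)$ is $6$-periodic and vanishes iff $j\equiv2\,[3]$, so $K_{n-1}(\overline c)=\overline 0$ forces $n\equiv0\,[3]$. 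As $3\nmid m$, this gives $n\equiv0\,[3m]$. It remains to test the two smallest candidates: at $n=3m$ the reductions give $K_{3m}(\overline c)\equiv\pm\overline 1\,[m]$ but $\equiv\mp\overline 1\,[k]$, so $K_{3m}(\overline c)\neq\pm\overline 1$ because $k\geq3$ and $m\geq3$ (as $m$ is odd); thus $3m$ is not a size. At $n=6m$, the formula of i)/ii) together with $\overline m\,(u+\overline 1)=\overline 0$ give $K_{6m-1}(\overline c)=\overline{6m}\,(u+\overline 1)=\overline 0$ and $K_{6m}(\overline c)=\overline 1$, so $6m$ is a size, and hence the minimal one.

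The hardest part is precisely this last dichotomy: one must see that at $n=3m$ the matrix $A^{3m}$ is a \emph{nontrivial} scalar involution (congruent to $\overline 1$ modulo $m$ but to $-\overline 1$ modulo $k$) rather than $\pm Id$, which is exactly why the three hypotheses $k\geq3$, $m$ odd (so $m\geq3$) and $3\nmid m$ are all needed. Everything else is bookkeeping around the nilpotent relation $P^{2}=0$ and the single identity $u(u\pm\overline 1)=\overline 0$.
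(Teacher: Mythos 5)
Your proof is correct --- I checked the identity $A^{6}=I+\overline{6}(u+\overline{1})P$ with $P^{2}=0$ against the explicit values $K_{4}=\overline{6lm+5}$, $K_{5}=\overline{6lm+6}$, $K_{6}=\overline{6lm+7}$, the collapse $(u+\overline{1})\bigl(K_{r}-K_{r-1}\bigr)=u+\overline{1}$ coming from $u(u+\overline{1})=\overline{0}$, the analogous computation with $Q^{2}=0$ in case ii), and the endgame of iii) --- but it is organized genuinely differently from the paper's proof. For i) and ii) the paper computes the same six initial continuants and then runs a two-case induction on the recurrence $K_{n+1}=\overline{c}\,K_{n}-K_{n-1}$ (with separate handling of the wrap-around at $r=5$); your nilpotent decomposition replaces that induction by the one-line identity $(I+XP)^{q}=I+qXP$, so here the two arguments are the same computation in different packaging, yours being the tidier. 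The real divergence is in iii): the paper does not use i)/ii) there at all --- it first proves $\gcd(k,m)=1$, reduces the constant solution modulo $m$ (where $\overline{c}\equiv \overline{2}$, minimal size $m$, $M_{m}=Id$) and modulo $k$ (where $\overline{c}\equiv \overline{1}$, minimal size $3$, $M_{3}=-Id$), rules out the size $3m$ by the same sign clash you exploit (requiring $k\geq 3$ and $m$ odd), and then invokes the Chinese remainder theorem to conclude $M_{6m}=Id$ modulo $N$. You instead feed the formulas of i)/ii) together with $\overline{m}(u\pm\overline{1})=\overline{0}$ into the entry-wise criterion, obtaining $K_{6m-1}=\overline{0}$ and $K_{6m}=\overline{1}$ directly in $\mathbb{Z}/N\mathbb{Z}$; this dispenses with both the CRT step and the coprimality argument (your lower bound $3m\mid n$ also avoids it), and makes the lemma self-contained, which is a real simplification. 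One harmless imprecision: $K_{n-1}(\overline{c})=\overline{0}$ alone forces $A^{n}=\mathrm{diag}(s,s^{-1})$ with $s=K_{n}(\overline{c})$, not a scalar matrix with $s^{2}=\overline{1}$; it so happens that $A^{3m}$ is the scalar matrix $(\overline{2}u+\overline{1})Id$ in the present situation, and since your actual argument only uses the correct criterion ($K_{n-1}=\overline{0}$ and $K_{n}=\pm\overline{1}$), nothing is affected.
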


\begin{proof}

i) On a $\overline{m^{2}}=\overline{-lm}$ et $\overline{l^{2}m}=\overline{m}$. Calculons $K_{i}(\overline{lm+2})$ pour $i \in [\![0;5]\!]$.
\begin{itemize}
\item $K_{0}=\overline{1}$; 
\item $K_{1}(\overline{lm+2})=\overline{lm+2}$; 
\item $K_{2}(\overline{lm+2})=\overline{3lm+3}$;
\item $K_{3}(\overline{lm+2})=\overline{5lm+4}$; 
\item $K_{4}(\overline{lm+2})=\overline{6lm+5}$; 
\item $K_{5}(\overline{lm+2})=\overline{6lm+6}$.
\\
\end{itemize}

Montrons la formule par récurrence. Celle-ci est, évidemment, vraie pour $n \leq 5$. Supposons qu'il existe un $n$ tel que la formule est vraie pour $n$ et $n-1$. On a \[K_{n+1}(\overline{lm+2})= \overline{(lm+2)}K_{n}(\overline{lm+2})-K_{n-1}(\overline{lm+2}).\]
\noindent On va distinguer deux cas :

\begin{itemize}

\item Si $n=6q$.

\begin{eqnarray*}
K_{n+1}(\overline{lm+2}) &=& \overline{(lm+2)}(K_{0}+\overline{6q(lm+1)})-K_{5}(\overline{lm+2})-\overline{6(q-1)(lm+1)}\\
                         &=& \overline{lm+2+6l^{2}m^{2}q+6lmq+12lmq+12q-6lm-6-6lmq-6q+6lm+6} \\
												 &=& K_{1}(\overline{lm+2})+\overline{6lmq+6q} \\
												 &=& K_{1}(\overline{lm+2})+\overline{6q(lm+1)}. \\
\end{eqnarray*}

\item Si $n=6q+r$ avec $r \in [\![1;5]\!]$.

\begin{eqnarray*}
K_{n+1}(\overline{lm+2}) &=& \overline{(lm+2)}(K_{r}(\overline{lm+2})+\overline{6q(lm+1)})-K_{r-1}(\overline{lm+2})-\overline{6q(lm+1)} \\
                         &=& \overline{(lm+2)}K_{r}(\overline{lm+2})-K_{r-1}(\overline{lm+2})+\overline{6q(lm+1)(lm+2)} \\
												 & & \overline{-6q(lm+1)} \\
												 &=& K_{r+1}(\overline{lm+2})+\overline{6q(lm+1)^{2}} \\
												 &=& K_{r+1}(\overline{lm+2})+\overline{6q(lm+1)}. \\
\end{eqnarray*}

\noindent Si $r=5$ alors $K_{6}(\overline{lm+2})=\overline{6lm+7}=K_{0}+\overline{6lm+6}$. En combinant cela à la formule ci-dessus, on obtient $K_{n+1}(\overline{lm+2})=K_{0}+\overline{6(q+1)(lm+1)}$.
\\
\end{itemize}	

\noindent La formule est vraie pour $n+1$. Par récurrence, la formule est vraie pour tout $n$.																			
\\
\\ii) On a $\overline{m^{2}}=\overline{lm}$ et $\overline{l^{2}m}=\overline{m}$. Calculons $K_{i}(\overline{lm-2})$ pour $i \in [\![0;5]\!]$.
\begin{itemize}
\item $K_{0}=\overline{1}$; 
\item $K_{1}(\overline{lm-2})=\overline{lm-2}$; 
\item $K_{2}(\overline{lm-2})=\overline{-3lm+3}$;
\item $K_{3}(\overline{lm-2})=\overline{5lm-4}$; 
\item $K_{4}(\overline{lm-2})=\overline{-6lm+5}$; 
\item $K_{5}(\overline{lm-2})=\overline{6lm-6}$.
\\
\end{itemize}

Montrons la formule par récurrence. Celle-ci est, évidemment, vraie pour $n \leq 5$. Supposons qu'il existe un $n$ tel que la formule est vraie pour $n$ et $n-1$. On a \[K_{n+1}(\overline{lm-2})= (lm-2)K_{n}(\overline{lm-2})-K_{n-1}(\overline{lm-2}).\]

\noindent On va distinguer deux cas :

\begin{itemize}

\item Si $n=6q$. $n$ est pair et on a : 

\end{itemize}

\begin{eqnarray*}
K_{n+1}(\overline{lm-2}) &=& \overline{(lm-2)}(K_{0}+\overline{(-1)^{n+1}6q(lm-1)})-K_{5}(\overline{lm-2})-\overline{(-1)^{n}6(q-1)(lm-1)} \\
                         &=& \overline{lm-2-6lm+6+(-1)^{n+1}(-12lmq+12q+6qlm-6q-6lm+6)} \\
												 &=& K_{1}(\overline{lm-2})+\overline{(-1)^{n+1}(-6lmq+6q-6lm+6)-6lm+6} \\
												 &=& K_{1}(\overline{lm-2})+\overline{(-1)^{n+2}6q(lm-1)+(-6lm+6)((-1)^{n+1}+1)} \\
												 &=& K_{1}(\overline{lm-2})+\overline{(-1)^{n+2}6q(lm-1)}~({\rm car}~n~{\rm est~pair}). \\
\end{eqnarray*}

\begin{itemize}

\item Si $n=6q+r$ avec $r \in [\![1;5]\!]$.

\begin{eqnarray*}
K_{n+1}(\overline{lm-2}) &=& \overline{(lm-2)}(K_{r}(\overline{lm-2})+\overline{(-1)^{n+1}6q(lm-1)})-K_{r-1}(\overline{lm-2}) \\
                         & & -\overline{(-1)^{n}6q(lm-1)} \\
                         &=& K_{r+1}(\overline{lm-2})+\overline{(-1)^{n+1}6q(lm-1)(lm-2+1)} \\
												 &=& K_{r+1}(\overline{lm-2})+\overline{(-1)^{n+1}6q(-lm+1)} \\
												 &=& K_{r+1}(\overline{lm-2})+\overline{(-1)^{n+2}6q(lm-1)}. \\
\end{eqnarray*}

\noindent Si $r=5$ alors $n$ est impair. Comme $n+2$ est impair, on a \[K_{6}(\overline{lm-2})=\overline{-6lm+7}=K_{0}+\overline{6-6lm}=K_{0}+(-1)^{n+2}\overline{6(lm-1)}.\] 
\noindent En combinant cela à la formule ci-dessus, on obtient \[K_{n+1}(\overline{lm-2})=K_{0}+\overline{(-1)^{n+2}6(q+1)(lm-1)}.\]
\end{itemize}	

\noindent La formule est vraie pour $n+1$. Par récurrence, celle-ci est vraie pour tout $n$.
\\
\\ \noindent iii) Supposons, de plus, $k \geq 3$, $m$ impair et non divisible par 3. On suppose que $m \equiv -l [k]$. Remarquons que $k$ et $m$ sont premiers entre eux. En effet, si $u$ divise $k$ et $m$ alors $u$ divise $l$ puisque $m \equiv -l [k]$. Donc, $u$ divise $l^{2}$. Comme $l^{2} \equiv 1 [k]$, $u$ divise 1.
\begin{itemize}
\item $lm+2 \equiv 2 [m]$ et la solution $(2+m\mathbb{Z})$-monomiale minimale de $(E_{m})$ est de taille $m$ avec 
\\$M_{m}(2+m\mathbb{Z},\ldots,2+m\mathbb{Z})=Id$;
\item $lm+2 \equiv 1 [k]$ et la solution $(1+k\mathbb{Z})$-monomiale minimale de $(E_{k})$ est de taille $3$ avec 
\\$M_{3}(1+k\mathbb{Z},1+k\mathbb{Z},1+k\mathbb{Z})=-Id$.
\\
\end{itemize}

\noindent Comme $k$ et $m$ divisent $N$ on a que la taille de la solution $\overline{(lm+2)}$-monomiale minimale de \eqref{p} est un multiple de ${\rm ppcm}(3,m)=3m$ (car $m$ n'est pas divisible par 3). De plus, \[M_{3m}(2+m\mathbb{Z},\ldots,2+m\mathbb{Z})=Id\] et \[M_{3m}(1+k\mathbb{Z},\ldots,1+k\mathbb{Z})=-Id \neq Id ~{\rm (puisque}~m ~{\rm est~impair~et}~k \geq 3).\] 
\noindent Donc, la solution n'est pas de taille $3m$. En revanche, on a	\[M_{6m}(2+m\mathbb{Z},\ldots,2+m\mathbb{Z})=Id\] et \[M_{6m}(1+k\mathbb{Z},\ldots,1+k\mathbb{Z})=Id.\] 

\noindent Ainsi, par le lemme chinois ($k$ et $m$ premiers entre eux), la solution est de taille $6m$. On procède de même si $m \equiv l [k]$.

\end{proof}

\noindent On donne en annexe \ref{B} les formules, lorsque les conditions initiales du lemme sont vérifiées, de $K_{n}(\overline{lm+2})$ et $K_{n}(\overline{lm-2})$ pour les petites valeurs de $n$. On peut maintenant démontrer le résultat qui suit.

\begin{proposition}
\label{315}

Soient $m$ un entier naturel supérieur à 2, impair, non divisible par 3. Il n'y pas d'entier monomialement irréductible de la forme $2m$.

\end{proposition}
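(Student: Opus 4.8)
The plan is to prove that every such $N=2m$ is \emph{monomialement réductible}, by exhibiting a single nonzero minimal monomial solution of \eqref{p} that is réductible; the statement then follows at once from Definition \ref{26}. To choose the exponent, note that since $m$ is odd we have $m\equiv 1\,[2]$ and $1^{2}\equiv 1\,[2]$, so Lemma \ref{314} applies with $k=2$ and $l=1$ (case ii), and I work with the class $\overline{k}=\overline{m-2}=\overline{lm-2}$. Under the isomorphism $\chi$ of Theorem \ref{31}, namely $\mathbb{Z}/2m\mathbb{Z}\cong\mathbb{Z}/2\mathbb{Z}\times\mathbb{Z}/m\mathbb{Z}$, this class corresponds to $(\overline{1},\overline{-2})$.

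First I would pin down the size $h$ of the $\overline{m-2}$-monomiale minimale solution. Modulo $2$ one has $M_{3}(\overline{1},\overline{1},\overline{1})=-Id$, so $M_{n}(\overline{1},\ldots,\overline{1})=\pm Id$ exactly when $3\mid n$; modulo $m$ the constant continuant satisfies $K_{n}(\overline{-2})=(-1)^{n}\overline{n+1}$, so $M_{n}(\overline{-2},\ldots,\overline{-2})=\pm Id$ exactly when $m\mid n$, with $M_{m}(\overline{-2},\ldots,\overline{-2})=-Id$ since $m$ is odd. As $3\nmid m$ gives $\gcd(3,m)=1$, Theorem \ref{31} shows a common monomial solution occurs precisely for $3m\mid n$, and at $n=3m$ both reductions equal $-Id$. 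Hence $h=3m$ and $M_{3m}(\overline{m-2},\ldots,\overline{m-2})=-Id$; this word is nonzero because $\overline{m-2}\neq\overline{0}$ (as $m>2$).

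The decisive step is an explicit reduction of this length-$3m$ word. By Lemma \ref{34}, a bordered word $(\overline{a},\overline{m-2},\ldots,\overline{m-2},\overline{a})$ with $s$ interior letters is a solution whenever $K_{s}(\overline{m-2})=\overline{\epsilon}=\pm\overline{1}$, and then $\overline{a}=\overline{\epsilon}\,K_{s-1}(\overline{m-2})$. Through Theorem \ref{31}, the condition $K_{s}(\overline{m-2})=\pm\overline{1}$ amounts to $K_{s}(\overline{-2})=(-1)^{s}\overline{s+1}\equiv\pm 1\,[m]$, i.e. $s\equiv 0$ or $-2\,[m]$, together with $K_{s}(\overline{1},\ldots,\overline{1})$ being odd, i.e. $s\not\equiv 2\,[3]$. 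The induced value of $\overline{a}$ is then $\overline{0}$ (if $s\equiv 0\,[m]$) or $\overline{-2}$ (if $s\equiv -2\,[m]$) modulo $m$, while the mod-$2$ parity of $K_{s-1}(\overline{1},\ldots,\overline{1})$ fixes it modulo $2$; choosing $s\equiv 1\,[3]$ in the first case and $s\equiv 0\,[3]$ in the second pins the border to exactly $\overline{m}$ or $\overline{-2}$. Taking the interior lengths to be $\{m,2m-2\}$ when $m\equiv 1\,[3]$ and $\{m-2,2m\}$ when $m\equiv 2\,[3]$ (both residues $\not\equiv 2\,[3]$), I obtain two solutions with complementary borders $\overline{m}$ and $\overline{-2}$ whose interior lengths sum to $3m-2$. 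Since the two junction entries of the sum are $\overline{-2}+\overline{m}=\overline{m-2}$, expanding $\oplus$ yields
\[(\overline{-2},\overline{m-2},\ldots,\overline{m-2},\overline{-2})\oplus(\overline{m},\overline{m-2},\ldots,\overline{m-2},\overline{m})=(\overline{m-2},\ldots,\overline{m-2})\in(\mathbb{Z}/2m\mathbb{Z})^{3m}.\]

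Both factors are solutions of size $\geq 3$ (their sizes are at least $m\geq 5$), so Definition \ref{23} shows $(\overline{m-2},\ldots,\overline{m-2})$ is réductible, whence $2m$ is monomialement réductible and no integer of the form $2m$ is monomialement irréductible. The main obstacle is exactly this reduction: verifying that the two bordered factors are \emph{simultaneously} genuine solutions forces the split to depend on $m\bmod 3$ and requires combining, via Theorem \ref{31}, the clean evaluation $K_{s}(\overline{-2})=(-1)^{s}\overline{s+1}$ modulo $m$ with the mod-$2$ parity of $K_{s}(\overline{1},\ldots,\overline{1})$. The explicit constant-continuant formulas recorded in Lemma \ref{314}(i)--(ii) are precisely what make these evaluations routine.
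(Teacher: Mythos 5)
Your proof is correct and follows essentially the same route as the paper: you pin the minimal monomial size at $3m$, use Lemme \ref{34} to produce a bordered solution reducing it, and split into cases according to $m \bmod 3$ (the paper's split modulo $6$ is the same thing since $m$ is odd); in fact, up to the global negation $\overline{m-2}=-\overline{m+2}$ (Proposition \ref{spe}), your $\overline{m}$-bordered factor of interior length $m$ (resp. $m-2$) is exactly the paper's solution $(\overline{a},\overline{m+2},\ldots,\overline{m+2},\overline{a})$ of size $m+2$ (resp. $m$). The only differences are presentational: the paper stops after exhibiting that single factor, since the complementary factor is automatically a solution by the property of $\oplus$ recalled after the Définition \ref{21}, whereas you construct both factors and check the junction condition explicitly, and you evaluate the continuants via the Chinese remainder theorem (using $K_{n}(\overline{-2})=(-1)^{n}\overline{n+1}$ modulo $m$) rather than via the mod-$2m$ formulas of Lemme \ref{314}.
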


\begin{proof}

Soit $N=2m$ avec $m$ vérifiant les conditions de l'énoncé. Comme $m$ est impair et non divisible par 3, on a $m \equiv \pm 1[6]$ et $m \equiv -1 [2]$. Par le lemme précédent, $K_{n}(\overline{m+2})=K_{r}(\overline{m+2})+\overline{n-r}$ (puisque $\overline{6m}=\overline{0}$ et $6q=n-r$). En procédant comme dans la démonstration précédente, on obtient que la solution $\overline{(m+2)}$-monomiale minimale de \eqref{p} est de taille $3m$.

\begin{itemize}

\item Si $m \equiv 1 [6]$ alors 
\[K_{m}(\overline{m+2})=K_{1}(\overline{m+2})+\overline{(m-1)}=\overline{m+2+m-1}=\overline{1}.\]
On pose $\overline{a}=K_{m-1}(\overline{m+2}).$ Par le lemme \ref{34}, $(\overline{a},\overline{m+2},\ldots,\overline{m+2},\overline{a})$ est une solution de taille $m+2$ de \eqref{p}.
\\
\item Si $m \equiv 5 [6]$ alors 
\[K_{m-2}(\overline{m+2})=K_{3}(\overline{m+2})+\overline{(m-2-3)}=\overline{5m+4+m-5}=\overline{-1}.\]
On pose $\overline{a}=-K_{m-3}(\overline{m+2}).$ Par le lemme \ref{34}, $(\overline{a},\overline{m+2},\ldots,\overline{m+2},\overline{a})$ est une solution de taille $m$ de \eqref{p}.
\\
\end{itemize}

\noindent Ainsi, la solution  $\overline{(m+2)}$-monomiale minimale de \eqref{p} est réductible. Donc, $N$ est monomialement réductible.

\end{proof}

\begin{remark}

{\rm Si $N=2m$ avec $m$ impair mais divisible par 3 alors la solution $\overline{(m+2)}$-monomiale minimale de \eqref{p} est irréductible de taille $m$. Démontrons-le rapidement. 
\\
\\Si $N=6=2 \times 3$ alors $\overline{(m+2)}=\overline{-1}$ et la solution est irréductible de taille 3. 
\\
\\Si $N \geq 18$ alors $\overline{(m+2)} \neq \pm \overline{1}$. En procédant comme dans la démonstration du lemme \ref{314}, on a que la solution $\overline{(m+2)}$-monomiale minimale de \eqref{p} est de taille $m$ (car $m$ est divisible par 3). Si la solution $\overline{(m+2)}$-monomiale minimale de \eqref{p} est réductible alors il existe une solution de \eqref{p} de la forme $(\overline{a},\overline{m+2},\ldots,\overline{m+2},\overline{a})$ de taille comprise entre 4 et $m-2$. En particulier, $\exists j \in [\![2;m-4]\!]$ tel que $K_{j}(\overline{m+2},\ldots,\overline{m+2})=\pm \overline{1}$. Considérons les différents cas :
\begin{itemize}
\item si $j \equiv u [6]$ avec $u \in \{0, 4, 5\}$, $K_{j}(\overline{m+2})=\overline{u+1+j-u}=\overline{1+j} \neq \pm \overline{1}$ car $1+j \in [\![3;m-3]\!]$;
\item si $j \equiv v [6]$ avec $v \in [\![1;3]\!]$, $K_{j}(\overline{m+2},\ldots,\overline{m+2})=\overline{m+v+1+j-v}=\overline{m+j+1} \neq \pm \overline{1}$ car $m+j+1 \in [\![m+3;2m-3]\!]$.
\\
\end{itemize}
\noindent Ainsi, la solution $\overline{(m+2)}$-monomiale minimale de \eqref{p} est irréductible.

}

\end{remark}

\noindent Pour continuer notre étude, on a besoin d'une version légèrement différente du lemme \ref{314}.

\begin{lemma}
\label{316}

Soient $k$ et $m$ deux entiers naturels supérieurs à 2 et $N=km$. On suppose que $m \equiv \pm l [k]$ et que $l^{2} \equiv 1 [k]$. 
\\
\\A) Soit $n=12q+r$ avec $q$ entier naturel et $r \in [\![0;11]\!]$.
\\
\\i) Supposons $m \equiv -l [k]$. On a $K_{n}(\overline{lm+2})=K_{r}(\overline{lm+2})+\overline{12q(lm+1)}.$ 
\\ \noindent ii) Supposons $m \equiv l [k]$. On a $K_{n}(\overline{lm-2})=K_{r}(\overline{lm-2})+\overline{(-1)^{n+1}12q(lm-1)}.$ 
\\
\\B) Soit $n=24q+r$ avec $q$ entier naturel et $r \in [\![0;23]\!]$.
\\
\\i) Supposons $m \equiv -l [k]$. On a $K_{n}(\overline{lm+2})=K_{r}(\overline{lm+2})+\overline{24q(lm+1)}.$ 
\\ \noindent ii) Supposons $m \equiv l [k]$. On a $K_{n}(\overline{lm-2})=K_{r}(\overline{lm-2})+\overline{(-1)^{n+1}24q(lm-1)}.$ 

\end{lemma}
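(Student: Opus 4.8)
The plan is to deduce this lemma directly from Lemme \ref{314} rather than to redo the inductive computation, since the hypotheses here ($k,m \geq 2$, $m \equiv \pm l [k]$, $l^{2} \equiv 1 [k]$) are exactly those required there. The point is that the period-$6$ formula already established governs $K_{n}$ completely; the period-$12$ and period-$24$ versions should then follow by a simple regrouping of the Euclidean division, with no new recurrence to set up.

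First, for A)i) I would write $n = 12q + r$ with $r \in [\![0;11]\!]$ and perform a second division $r = 6s + t$ with $s \in [\![0;1]\!]$ and $t \in [\![0;5]\!]$, so that $n = 6(2q+s) + t$ is itself a Euclidean division by $6$. Applying Lemme \ref{314} i) to $n$ and to $r$ gives respectively $K_{n}(\overline{lm+2}) = K_{t}(\overline{lm+2}) + \overline{6(2q+s)(lm+1)}$ and $K_{r}(\overline{lm+2}) = K_{t}(\overline{lm+2}) + \overline{6s(lm+1)}$; subtracting eliminates both $K_{t}$ and the $s$-contribution, leaving $\overline{12q(lm+1)}$, as desired. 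For B)i) the same scheme works verbatim with $n = 24q + r$, $r \in [\![0;23]\!]$ and $r = 6s + t$, $s \in [\![0;3]\!]$, so that $n = 6(4q+s)+t$ and the difference reduces to $\overline{6 \cdot 4q (lm+1)} = \overline{24q(lm+1)}$.

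For the statements ii) I would use the same regrouping, the only extra ingredient being the control of the sign $(-1)^{n+1}$ that appears in Lemme \ref{314} ii). The key remark is that $n - r$ is a multiple of $12$ (resp. $24$), hence even, so $(-1)^{n+1} = (-1)^{r+1}$; consequently the two difference terms produced by applying Lemme \ref{314} ii) to $n$ and to $r$ carry a common sign factor, and subtraction yields $K_{n}(\overline{lm-2}) - K_{r}(\overline{lm-2}) = \overline{(-1)^{n+1}12q(lm-1)}$ in case A (resp. $\overline{(-1)^{n+1}24q(lm-1)}$ in case B).

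I expect no genuine obstacle here: once the reduction to Lemme \ref{314} is in place, everything is routine bookkeeping of indices. The single point that demands attention — and the only place a sign error could slip in — is the parity argument $(-1)^{n+1} = (-1)^{r+1}$ in parts ii), which is precisely what makes the two sign factors coincide and allows them to be collected after subtraction.
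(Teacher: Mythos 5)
Your proof is correct and follows essentially the same route as the paper: the paper also deduces the lemma from Lemme \ref{314} by applying it both to $n$ and to $r$ (its case split $r \in [\![0;5]\!]$ versus $r \in [\![6;11]\!]$ is exactly your $s=0$ versus $s=1$, and it invokes the same parity observation, « $n$ et $r$ ont même parité », to handle the sign $(-1)^{n+1}$ in parts ii)). The only cosmetic difference is that the paper derives B) from the already-proved A) rather than directly from Lemme \ref{314}, which changes nothing of substance.
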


\begin{proof}

A) On se place dans le cas i). Si $r \in [\![0;5]\!]$ alors, par le lemme \ref{314}, \[K_{n}(\overline{lm+2})=K_{r}(\overline{lm+2})+\overline{6(2q)(lm+1)}=K_{r}(\overline{lm+2})+\overline{12q(lm+1)}.\] Si $r \in [\![6;11]\!]$, $n=6(2q+1)+(r-6)$ et, par le lemme \ref{314}, on obtient, \[K_{n}(\overline{lm+2})=K_{r-6}(\overline{lm+2})+\overline{6(2q+1)(lm+1)}\] et \[K_{r}(\overline{lm+2})=K_{r-6}(\overline{lm+2})+\overline{6(lm+1)}.\] \noindent Donc, $K_{n}(\overline{lm+2})=K_{r}(\overline{lm+2})+\overline{12q(lm+1)}.$ On procède de façon analogue pour ii), en utilisant, en plus, que $n$ et $r$ ont même parité.
\\
\\B) On démontre les formules B) i) et ii) à partir des formules A) en procédant de la même façon qu'au dessus.

\end{proof}

\noindent On peut maintenant considérer les cas manquants.

\begin{proposition}
\label{317}

Soit $k \in \{3, 4, 6, 8, 12, 24\}$. Soit $m$ un entier naturel impair supérieur à 2 et non divisible par 3. Il n'y pas d'entier monomialement irréductible de la forme $km$.

\end{proposition}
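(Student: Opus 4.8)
Le plan est d'adapter au cas $N=km$ la stratégie de la proposition \ref{315}. Comme $m$ est impair et non divisible par $3$, il est premier avec $k$ (dont les seuls diviseurs premiers sont $2$ et $3$), de sorte que le lemme chinois \ref{31} s'applique. L'observation structurelle décisive est que, pour chacune des six valeurs $k \in \{3,4,6,8,12,24\}$, le groupe $(\mathbb{Z}/k\mathbb{Z})^{\times}$ est d'exposant au plus $2$, autrement dit $m^{2} \equiv 1 [k]$ pour tout $m$ premier avec $k$. On peut donc fixer un entier $l$ vérifiant $l \equiv -m [k]$ : on a alors $l^{2} \equiv m^{2} \equiv 1 [k]$ et $m \equiv -l [k]$, ce qui place exactement dans les hypothèses des lemmes \ref{314} et \ref{316}. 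On pose $\overline{v}=\overline{lm+2} \in \mathbb{Z}/N\mathbb{Z}$, qui satisfait $\overline{v} \equiv 2 [m]$ et $\overline{v} \equiv 1 [k]$ (car $lm \equiv -m^{2} \equiv -1 [k]$) ; en particulier $\overline{v} \neq \overline{0}$ puisque $m>2$.

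La première étape est de déterminer la taille de la solution $\overline{v}$-monomiale minimale. Le lemme \ref{314} iii) s'applique ($k \geq 3$, $m$ impair non divisible par $3$) et donne directement une taille égale à $6m$. On peut aussi la retrouver par le lemme chinois : la taille est le plus petit $n$ tel que $M_{n}$ vaille simultanément $Id$ modulo $m$ (ce qui impose $n \equiv 0 [m]$) et $Id$ modulo $k$ (ce qui impose $n \equiv 0 [6]$), d'où $n={\rm ppcm}(6,m)=6m$ puisque $m$ est premier avec $6$.

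La deuxième étape consiste à produire une sous-solution strictement plus courte, exactement comme dans la proposition \ref{315}. Comme $m$ est premier avec $6$, on a $m \equiv 1 [6]$ ou $m \equiv 5 [6]$. En combinant $K_{n}(\overline{2})=\overline{n+1}$ modulo $m$ avec la périodicité (de période $6$) de $n \mapsto K_{n}(\overline{1})$ modulo $k$, le lemme chinois fournit $K_{m}(\overline{v})=\overline{1}$ lorsque $m \equiv 1 [6]$ et $K_{m-2}(\overline{v})=\overline{-1}$ lorsque $m \equiv 5 [6]$ (les formules explicites des lemmes \ref{314} et \ref{316} en donnent une vérification alternative). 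Le lemme \ref{34} fournit alors une solution $S_{2}=(\overline{a},\overline{v},\ldots,\overline{v},\overline{a})$ de \eqref{p} de taille $m+2$ (resp. $m$), strictement inférieure à $6m$.

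Il reste à conclure à la réductibilité. On écrit le $6m$-uplet constant $(\overline{v},\ldots,\overline{v})$ sous la forme $S_{1} \oplus S_{2}$, où $S_{1}=(\overline{v-a},\overline{v},\ldots,\overline{v},\overline{v-a})$ est de taille $5m$ (resp. $5m+2$), donc au moins $3$ : un calcul immédiat à partir de la définition \ref{21} montre que $S_{1}\oplus S_{2}$ redonne bien ce $6m$-uplet constant (les deux sommes aux jonctions valent $\overline{(v-a)+a}=\overline{v}$). Comme $S_{2}$ est une solution et que $S_{1}\oplus S_{2}$ en est une, la propriété de $\oplus$ rappelée juste après la définition \ref{21} entraîne que $S_{1}$ est elle aussi une solution. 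Les deux facteurs étant de taille au moins $3$, la solution $\overline{v}$-monomiale minimale est réductible, donc $N=km$ est monomialement réductible. Le point le plus délicat est le calcul de taille de la première étape, où l'hypothèse $k \geq 3$ et la non-divisibilité de $m$ par $3$ interviennent de façon essentielle via ${\rm ppcm}(6,m)=6m$ ; une fois cette taille connue, la réduction suit fidèlement le schéma de la proposition \ref{315}.
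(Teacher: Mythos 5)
Your proof is correct, and although it follows the paper's overall template — Lemme \ref{314} iii) to get the size $6m$, Lemme \ref{34} to build the short solution $(\overline{a},\overline{v},\ldots,\overline{v},\overline{a})$, then an $\oplus$-decomposition — the way you produce the short solution is genuinely different and noticeably simpler. The paper writes $m \equiv \pm l\,[24]$ with $l \in \{1,5,7,11\}$ and runs through eight cases modulo $24$, alternating between the values $\overline{lm+2}$ and $\overline{lm-2}$ and relying on the period-$24$ formulas of Lemme \ref{316} together with the tables of the appendix \ref{B}. You instead observe that $(\mathbb{Z}/k\mathbb{Z})^{\times}$ has exponent at most $2$ for each of the six values of $k$, choose $l \equiv -m\,[k]$ once and for all, and work with the single value $\overline{v}=\overline{lm+2}$, evaluating the continuants componentwise through the Chinese remainder theorem: $K_{n}(\overline{2})=\overline{n+1}$ modulo $m$, while $K_{n}(\overline{1})$ is $6$-periodic modulo $k$. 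This collapses the eight cases modulo $24$ into two cases modulo $6$, dispenses with Lemme \ref{316} and the appendix entirely, and recovers exactly the paper's sizes ($m+2$ when $m \equiv 1\,[6]$, $m$ when $m \equiv 5\,[6]$); the tuples you obtain coincide with the paper's when $m \equiv -l\,[24]$ and are their negatives when $m \equiv l\,[24]$, which is harmless by the proposition \ref{spe}. You also make explicit the final decomposition $(\overline{v},\ldots,\overline{v})=S_{1}\oplus S_{2}$ with both factors of size at least $3$, a step the paper leaves implicit when concluding reducibility. One small caveat: in your alternative re-derivation of the size $6m$ by the Chinese remainder theorem, the defining condition is $M_{n}=\pm Id$, not $Id$, in each factor; the case $-Id$ must be excluded modulo $m$ (it is, since $m>2$), which then forces $+Id$ modulo $k$ as well — but this does not affect your proof, since you correctly cite Lemme \ref{314} iii) as the primary justification. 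In exchange for its length, the paper's route records the explicit closed formulas $K_{r}(\overline{lm\pm2})$ for all $r \leq 23$; your route buys brevity and makes transparent why precisely the moduli $k \in \{3,4,6,8,12,24\}$ occur, namely as the moduli $\geq 3$ whose unit group has exponent $2$.
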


\begin{proof}

Soit $N=km$ avec $k$ et $m$ vérifiant les conditions de l'énoncé. Comme $m$ est impair et non divisible par 3, on a $m \equiv \pm l [24]$ avec $l \in \{1, 5, 7, 11\}$. En particulier, si $l$ appartient à l'ensemble précédent alors $l^{2} \equiv 1 [24]$. Donc, $m \equiv \pm l [k]$ avec $l \in \{1, 5, 7, 11\}$ et $l^{2} \equiv 1 [k]$. Soit $n=24q+r$ avec $0 \leq r \leq 23$. Par les lemmes \ref{314} et \ref{316}, on a 
\begin{itemize}
\item si $m \equiv -l [24]$ alors $m \equiv -l [k]$ et $K_{n}(\overline{lm+2})=K_{r}(\overline{lm+2})+\overline{(n-r)}$. De plus, la solution $\overline{(lm+2)}$-monomiale minimale de \eqref{p} est de taille $6m$.
\item si $m \equiv l [24]$ alors $m \equiv l [k]$ et $K_{n}(\overline{lm-2})=K_{r}(\overline{lm-2})+\overline{(-1)^{n}(n-r)}$. De plus, la solution $\overline{(lm-2)}$-monomiale minimale de \eqref{p} est de taille $6m$.
\\
\end{itemize}
On traite séparément les différents cas.

\begin{itemize}

\item Si $m \equiv \pm 1 [24]$.
\begin{itemize}[label=$\circ$]
\item Si $m \equiv -1 [24]$. 

\[K_{m-2}(\overline{m+2})=K_{21}(\overline{m+2})+\overline{(m-2-21)}=\overline{23m+22+m-23}=\overline{-1}.\]
On pose $\overline{a}=-K_{m-3}(\overline{m+2}).$ Par le lemme \ref{34}, $(\overline{a},\overline{m+2},\ldots,\overline{m+2},\overline{a})$ est une solution de taille $m$ de \eqref{p}.
\\

\item Si $m \equiv 1 [24]$. 

\[K_{m}(\overline{m-2})=K_{1}(\overline{m-2})-\overline{(m-1)}=\overline{m-2-m+1}=\overline{-1}.\]
On pose $\overline{a}=-K_{m-1}(\overline{m-2}).$ Par le lemme \ref{34}, $(\overline{a},\overline{m-2},\ldots,\overline{m-2},\overline{a})$ est une solution de taille $m+2$ de \eqref{p}.
\\
\end{itemize}

\item Si $m \equiv \pm 5 [24]$. 
\begin{itemize}[label=$\circ$]
\item Si $m \equiv -5 [24]$. 

\[K_{m}(\overline{5m+2})=K_{19}(\overline{5m+2})+\overline{(m-19)}=\overline{-m+20+m-19}=\overline{1}.\]
On pose $\overline{a}=K_{m-1}(\overline{5m+2}).$ Par le lemme \ref{34}, $(\overline{a},\overline{5m+2},\ldots,\overline{5m+2},\overline{a})$ est une solution de taille $m+2$ de \eqref{p}.
\\

\item Si $m \equiv 5 [24]$. 

\[K_{m-2}(\overline{5m-2})=K_{3}(\overline{5m-2})-\overline{(m-2-3)}=\overline{m-4-m+5}=\overline{1}.\]
On pose $\overline{a}=K_{m-3}(\overline{5m-2}).$ Par le lemme \ref{34}, $(\overline{a},\overline{5m-2},\ldots,\overline{5m-2},\overline{a})$ est une solution de taille $m$ de \eqref{p}.
\\
\end{itemize}
\item Si $m \equiv \pm 7 [24]$. 
\begin{itemize}[label=$\circ$]
\item Si $m \equiv -7 [24]$. 

\[K_{m-2}(\overline{7m+2})=K_{15}(\overline{7m+2})+\overline{(m-2-15)}=\overline{-m+16+m-17}=\overline{-1}.\]
On pose $\overline{a}=-K_{m-3}(\overline{7m+2}).$ Par le lemme \ref{34}, $(\overline{a},\overline{7m+2},\ldots,\overline{7m+2},\overline{a})$ est une solution de taille $m$ de \eqref{p}.
\\

\item Si $m \equiv 7 [24]$. 

\[K_{m}(\overline{7m-2})=K_{7}(\overline{7m-2})-\overline{(m-7)}=\overline{m-8-m+7}=\overline{-1}.\]
On pose $\overline{a}=-K_{m-1}(\overline{7m-2}).$ Par le lemme \ref{34}, $(\overline{a},\overline{7m-2},\ldots,\overline{7m-2},\overline{a})$ est une solution de taille $m+2$ de \eqref{p}.
\\
\end{itemize}
\item Si $m \equiv \pm 11 [24]$. 
\begin{itemize}[label=$\circ$]
\item Si $m \equiv -11 [24]$. 

\[K_{m}(\overline{11m+2})=K_{13}(\overline{11m+2})+\overline{(m-13)}=\overline{-m+14+m-13}=\overline{1}.\]
On pose $\overline{a}=K_{m-1}(\overline{11m+2}).$ Par le lemme \ref{34}, $(\overline{a},\overline{11m+2},\ldots,\overline{11m+2},\overline{a})$ est une solution de taille $m+2$ de \eqref{p}.
\\

\item Si $m \equiv 11 [24]$. 

\[K_{m-2}(\overline{11m-2})=K_{9}(\overline{11m-2})-\overline{(m-2-9)}=\overline{m-10-m+11}=\overline{1}.\]
On pose $\overline{a}=K_{m-3}(\overline{11m-2}).$ Par le lemme \ref{34}, $(\overline{a},\overline{11m-2},\ldots,\overline{11m-2},\overline{a})$ est une solution de taille $m$ de \eqref{p}.
\\

\end{itemize}

\end{itemize}

\noindent Donc, $N$ est monomialement réductible.

\end{proof}

\begin{remark}

{\rm On ne peut pas se servir des solutions utilisées dans la démonstration précédente pour considérer tous les entiers. Par exemple, si $N=11m$, il y a des valeurs de $m$ pour lesquels la méthode ci-dessus ne convient pas. Considérons $m=5 \equiv 5 [11]$. Ici, on a $lm-2=23$. La solution $\overline{23}$-monomiale minimale de $(E_{55})$ est irréductible. En effet, celle-ci est de taille 15, les racines modulo 55 de $X(X-23)$ sont $\overline{0}$, $\overline{23}$, $\overline{33}$ et $\overline{45}$ et on montre (en effectuant les calculs) qu'il n'y a pas de solutions de $(E_{55})$ de la forme $(\overline{33},\overline{23},\ldots,\overline{23},\overline{33})$ de taille inférieure à 13.
}

\end{remark}

\subsection{Quelques éléments pour conclure}
\label{CC}

Les éléments donnés dans la section précédente permettent d'achever la preuve du théorème \ref{27}. En particulier, celui-ci permet d'avoir le résultat ci-dessous :

\begin{corollary}

Les entiers monomialement irréductibles inférieurs à 17440 sont premiers ou égaux à 4, 6, 8, 12 ou 24. 

\end{corollary}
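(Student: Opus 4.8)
The plan is to derive this corollary as an immediate consequence of Theorem \ref{27}, using only a short case split on $N$ together with one elementary size estimate. I would fix an integer $N$ with $2 \leq N < 17440$ that is monomialement irréductible, and aim to show that $N$ is premier or belongs to $\{4, 6, 8, 12, 24\}$. If $N$ is premier there is nothing to prove, so I would assume from the outset that $N$ is composite.

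First I would dispose of the even case. If $N$ is pair, then point ii) of Theorem \ref{27} forces $N \in \{2, 4, 6, 8, 12, 24\}$; since $N$ is composite this leaves exactly $N \in \{4, 6, 8, 12, 24\}$, which is what is wanted.

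The substantive case is $N$ impair non premier, and here the entire content is supplied by point iii) of Theorem \ref{27}: it gives $N = \prod_{i=1}^{r} p_i$ with $r \geq 2$, the $p_i$ being pairwise distinct odd primes such that every $p_i \leq 1000$ already lies in $\Omega$. The key observation is a lower bound on the admissible prime factors. Since $\min \Omega = 107$, any factor $p_i \leq 1000$ satisfies $p_i \geq 107$, while any factor $p_i > 1000$ satisfies $p_i > 107$ trivially; hence each $p_i \geq 107$. Inspecting $\Omega$ shows moreover that its two smallest elements are $107$ and $163$, so the two smallest admissible distinct primes are $107$ and $163$. As $r \geq 2$ and the $p_i$ are pairwise distinct, this yields $N \geq 107 \times 163 = 17441 > 17440$, contradicting $N < 17440$. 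Thus the odd composite case is vacuous below the threshold, and the corollary follows.

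The argument is entirely routine once Theorem \ref{27} is available; the only step requiring any care — and the reason the threshold is exactly $17440$ — is checking that the two cheapest admissible prime factors are precisely $107$ and $163$, with product $17441$. This is where the congruence conditions and the explicit set $\Omega$ are really used: the bound is sharp because no admissible prime lies in $[\![2;106]\!]$, nor strictly between $107$ and $163$.
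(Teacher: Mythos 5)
Your proof is correct and coincides with the paper's intended argument: the paper states this corollary as an immediate consequence of Théorème \ref{27} without writing out the details, and your case split (even composite via point ii), odd composite via point iii)) together with the bound $107 \times 163 = 17441 > 17440$ is exactly the implicit derivation. The only cosmetic remark is that the congruence conditions modulo 5 and 8 play no role in your bound — the condition $p_i \leq 1000 \Rightarrow p_i \in \Omega$ alone gives $p_i \geq 107$ and forces the minimum product $107 \times 163$.
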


Les résultats précédents permettent donc d'obtenir un certain nombre de conditions nécessaires pour qu'un entier impair soit monomialement irréductible. De plus, ces conditions portant sur l'ensemble des facteurs premiers d'un entier, on a résolu la question initiale pour "beaucoup" d'entiers. Les techniques et résultats développés ici peuvent bien sûr être encore raffinés. À titre d'exemple, 1093 peut être considéré avec la méthode exposée dans la proposition \ref{310} ($361$ étant une racine modulo 1093 de $X^{3}-X^{2}-2X+1$). Cependant, ces techniques de réduction ne pourront sans doute pas permettre de résoudre entièrement la question de la classification posée initialement. On peut, néanmoins, au vu des résultats obtenus, énoncer la conjecture suivante :

\begin{con}

Soit $N \geq 2$. $N$ est monomialement irréductible si et seulement si $N \in \{4, 6, 8, 12, 24\}$ ou si $N$ est premier.

\end{con}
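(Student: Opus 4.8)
The plan is to isolate the one missing ingredient and phrase it as a claim about individual primes, everything else being already available. I would argue the target as an ``only if'': by Théorème~\ref{25} every prime is monomialement irréductible and by Proposition~\ref{35} so are $\{4,6,8,12,24\}$, while Théorème~\ref{27} ii) already settles the even case exactly, so it suffices to prove that \emph{no odd composite $N$ is monomialement irréductible}. By Théorème~\ref{27} iii) any hypothetical such $N$ is squarefree with every prime factor $p$ satisfying $p\equiv\pm 3~[5]$ and $p\equiv\pm 3~[8]$ (equivalently, $2$ and $5$ are both non-résidus modulo $p$, by Lemme~\ref{33}). Hence the whole conjecture reduces to the claim: \emph{for every prime $p$ with $p\equiv\pm 3~[5]$ and $p\equiv\pm 3~[8]$, and every $m\geq 2$ with $\gcd(m,p)=1$, the integer $N=mp$ is monomialement réductible.} Granting it, any candidate $N=\prod_{i=1}^r p_i$ with $r\geq 2$ is réductible through the factorisation $N=p_1\,(N/p_1)$, a contradiction.

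The natural route to this claim is to upgrade Propositions~\ref{310}--\ref{313} from their finite explicit lists to \emph{all} admissible primes, reusing their common mechanism. For $N=mp$ one factors $K_s(X)\mp\overline 1$, selects a root $\overline{k_p}$ modulo $p$ of one irreducible factor, and sets $\overline l=\psi(\pm 1+m\mathbb{Z},\,k_p+p\mathbb{Z})$ so that $K_s(\overline l)=\pm\overline 1$; Lemme~\ref{34} then yields a solution $(\overline a,\overline l,\ldots,\overline l,\overline a)$ of size $s+2$. One finally checks that the $\overline l$-monomiale minimale size $h$ (a multiple of $3$ by the choice $\overline l\equiv\pm 1~[m]$) exceeds $s+2$, so that this short solution realises a genuine $\oplus$-décomposition of $(\overline l,\ldots,\overline l)$. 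To run this uniformly one must produce, for every admissible $p$, an index $s$ and a factor of $K_s(X)\mp\overline 1$ that acquires a root modulo $p$ while keeping $h>s+2$.

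The main obstacle is exactly this uniform splitting requirement. Since $K_s(X)$ is, up to the substitution $X\mapsto 2\cos$, a Chebyshev polynomial of the second kind, the irreducible factors of $K_s(X)\mp\overline 1$ are the minimal polynomials of the numbers $2\cos(2\pi j/d)$, and the existence of a root modulo $p$ is governed by the splitting of $p$ in the real cyclotomic fields $\mathbb{Q}(\cos(2\pi/d))$, i.e.\ by the residue of $p$ modulo $d$. The claim thus becomes a \emph{covering-system} statement: find finitely many moduli $d$, each attached to an index $s$, whose associated splitting conditions are met by every prime with $p\equiv\pm 3~[5]$ and $p\equiv\pm 3~[8]$. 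Proving that these two hypotheses already force at least one such splitting is the delicate arithmetic core, and the persistence of the sporadic set $\Omega$ (the admissible primes $\leq 1000$ not yet covered) strongly suggests that no small covering system suffices.

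I would therefore proceed in two stages. First, mimic the density computation that gave $\lim_{x\to\infty}D(x)=3/4$ to show, via Dirichlet and Chebotarev, that the admissible-but-uncovered primes have density tending to $0$ as more moduli $d$ are admitted, confirming that these reductions reach almost all primes. Second, attempt to replace this asymptotic statement by an explicit finite covering; if, as the open status and the set $\Omega$ indicate, no finite covering exists, then reduction identities alone cannot close the conjecture, and one would need either a new invariant separating irréductibles monomiales from réductibles, or a direct bound forcing composite monomialement irréductibles to be small, with the computational certificate up to $17440$ (the Corollaire) retained as the verified baseline.
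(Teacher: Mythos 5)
The statement you were asked to prove is not a theorem of the paper: it appears there as an open \emph{conjecture}, supported only by Théorème \ref{27}, the reduction technique of propositions \ref{310} à \ref{313}, and machine computation (the paper reports the conjecture verified for $N \leq 101~042~478$, obtained by running that technique on all primes between 5 and 10~000). So there is no paper proof to compare your attempt against, and your proposal — correctly and honestly — does not claim to close the question either; it is a diagnosis of why it is open, not a proof.

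Within that caveat, your analysis is accurate and matches the paper's own architecture. The reduction to the single claim \emph{for every prime $p$ with $p\equiv\pm 3~[5]$ and $p\equiv\pm 3~[8]$ and every $m\geq 2$ coprime to $p$, $N=mp$ is monomialement réductible} is exactly right: Théorème \ref{27} disposes of the even case and forces any odd composite counterexample to be squarefree with admissible prime factors, and writing such an $N$ as $p_1\cdot(N/p_1)$ applies the claim. Your description of the mechanism of propositions \ref{310} à \ref{313} (choose $s$, take a root mod $p$ of a factor of $K_s(X)\mp\overline{1}$, lift by the lemme chinois so that $K_s(\overline{l})=\pm\overline{1}$, invoke Lemme \ref{34}, then rule out $h\leq s+2$) is faithful, and the reinterpretation via Chebyshev polynomials and splitting of $p$ in the fields $\mathbb{Q}(\cos(2\pi/d))$ is precisely what the paper's closing remark gestures at. The gap is the one you name yourself: no finite covering system of such splitting conditions is known to catch every admissible prime — the set $\Omega$ is exactly the residue of uncovered primes below 1000 — and the Dirichlet/Chebotarev density argument you outline can at best show the uncovered primes are sparse, never that they are absent. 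That unproved covering claim is the conjecture itself, so your proposal should be read as a correct reduction and obstruction analysis, not as a proof.
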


Informatiquement, on a obtenu (grâce à un programme reprenant la méthode utilisée dans les propositions \ref{310} à \ref{313}) que les entiers $N$, non premiers, divisibles par un nombre premier compris entre 5 et 10~000 sont monomialement réductibles. Ainsi, les deux plus petits nombres premiers pour lesquels on ne dispose pas d'information sont 10~037 et 10~067. En particulier, cela permet d'avoir, informatiquement, que la conjecture ci-dessus est vraie pour $N \leq 101~042~478$.

\begin{remark}

{\rm Dans la méthode évoquée ci-dessus, on a utilisé que $\pm 1$ est racine dans $\mathbb{Z}$ d'un polynôme du type $K_{n}(X,\ldots,X)\pm 1$. Si l'on souhaite aller plus loin avec cette méthode, il est intéressant de chercher les entiers naturels $n$ pour lesquels cette propriété est vraie. 
\\
\\$K_{n}(X,\ldots,X)=U_{n}(\frac{X}{2})$ où $U_{n}$ est le $n^{i\grave{e}me}$ polynôme de Tchebychev de seconde espèce (voir \cite{CO} section 5). 
\\
\\De plus, si $\theta \in \mathbb{R}-\pi \mathbb{Z}$, $U_{n}({\rm cos}(\theta))=\frac{{\rm sin}((n+1)\theta)}{{\rm sin}(\theta)}$ (voir \cite{HM} définition 1.2). Donc, 
\[K_{n}(1,\ldots,1)=U_{n}(\frac{1}{2})=U_{n}({\rm cos}(\frac{\pi}{3}))=\frac{{\rm sin}((n+1)\frac{\pi}{3})}{{\rm sin}(\frac{\pi}{3})}=\frac{{\rm sin}((n+1)\frac{\pi}{3})}{\frac{\sqrt{3}}{2}}.\] Ainsi, \[{\rm 1~est~racine~de}~ K_{n}(X,\ldots,X)+1 \Leftrightarrow {\rm sin}((n+1)\frac{\pi}{3})=-\frac{\sqrt{3}}{2} \Leftrightarrow n \equiv 3,4 [6].\]
On obtient de façon analogue que -1 est racine de $K_{n}(X,\ldots,X)+1$ si et seulement si $n \equiv 1 [3]$. De même, 1 (resp. -1) est racine de $K_{n}(X,\ldots,X)-1$ si et seulement si $n \equiv 0,1[6]$ (resp. $n \equiv 0[3]$).
\\
\\Donc, pour continuer la méthode exposée dans la section \ref{NI}, on doit considérer $n \not\equiv 2,5 [6]$. 
}

\end{remark}

\noindent {\bf Remerciements}.
Je remercie Valentin Ovsienko et Alain Ninet pour leur aide précieuse.

\appendix

\newpage

\section{Nombres premiers inférieurs à 1000}
\label{A}

La liste suivante donne les nombres premiers inférieurs à 1000. Les nombres écrits en {\color{blue} bleus} sont congrus à $\pm 1$ modulo 5 (ou égaux à 5). Ceux écrits en {\color{red} rouges} sont congrus à $\pm 1$ modulo 8. Les nombres écrits en {\color{ufogreen} verts} sont ceux traités séparément dans les propositions \ref{310}, \ref{311}, \ref{312} et \ref{313}. 
\\
\\ 2, 3, {\color{blue} 5}, {\color{red} 7}, {\color{blue} 11}, {\color{ufogreen} 13}, {\color{red} 17}, {\color{blue} 19}, {\color{red} 23}, {\color{blue} 29}, {\color{blue} 31}, {\color{ufogreen} 37}, {\color{blue} 41}, {\color{ufogreen} 43}, {\color{red} 47}, {\color{ufogreen} 53}, {\color{blue} 59}, {\color{blue} 61}, {\color{ufogreen} 67}, {\color{blue} 71}, {\color{red} 73}, {\color{blue} 79}, {\color{ufogreen} 83}, {\color{blue} 89}, {\color{red} 97}, {\color{blue} 101}, {\color{red} 103}, 107, {\color{blue} 109}, {\color{red} 113}, {\color{red} 127}, {\color{blue} 131}, {\color{red} 137}, {\color{blue} 139}, {\color{blue} 149}, {\color{blue} 151}, {\color{ufogreen} 157}, 163, {\color{red} 167}, 173, {\color{blue} 179}, {\color{blue} 181}, {\color{blue} 191}, {\color{red} 193}, {\color{ufogreen} 197}, {\color{blue} 199}, {\color{blue} 211}, {\color{red} 223}, {\color{ufogreen} 227}, {\color{blue} 229}, {\color{red} 233}, {\color{blue} 239}, {\color{blue} 241}, {\color{blue} 251}, {\color{red} 257}, {\color{red} 263}, {\color{blue} 269}, {\color{blue} 271}, 277, {\color{blue} 281}, 283, {\color{ufogreen} 293}, {\color{ufogreen} 307}, {\color{blue} 311}, {\color{red} 313}, 317, {\color{blue} 331}, {\color{red} 337}, 347, {\color{blue} 349}, {\color{red} 353}, {\color{blue} 359}, {\color{red} 367}, {\color{ufogreen} 373}, {\color{blue} 379}, {\color{red} 383}, {\color{blue} 389}, {\color{ufogreen} 397}, {\color{blue} 401}, {\color{blue} 409}, {\color{blue} 419}, {\color{blue} 421}, {\color{blue} 431}, {\color{red} 433}, {\color{blue} 439}, {\color{ufogreen} 443}, {\color{blue} 449}, {\color{red} 457}, {\color{blue} 461}, {\color{red} 463}, {\color{ufogreen} 467}, {\color{blue} 479}, {\color{red} 487}, {\color{blue} 491}, {\color{blue} 499}, {\color{red} 503}, {\color{blue} 509}, {\color{blue} 521}, 523, {\color{blue} 541}, {\color{ufogreen} 547}, 557, 563, {\color{blue} 569}, {\color{blue} 571}, {\color{red} 577}, {\color{ufogreen} 587}, {\color{red} 593}, {\color{blue} 599}, {\color{blue} 601}, {\color{red} 607}, 613, {\color{red} 617}, {\color{blue} 619}, {\color{blue} 631}, {\color{blue} 641}, {\color{ufogreen} 643}, {\color{red} 647}, 653, {\color{blue} 659}, {\color{blue} 661}, {\color{red} 673}, {\color{ufogreen} 677}, {\color{ufogreen} 683}, {\color{blue} 691}, {\color{blue} 701}, {\color{blue} 709}, {\color{blue} 719}, {\color{red} 727}, 733, {\color{blue} 739}, {\color{red} 743}, {\color{blue} 751}, {\color{ufogreen} 757}, {\color{blue} 761}, {\color{blue} 769}, 773, 787, {\color{ufogreen} 797}, {\color{blue} 809}, {\color{blue} 811}, {\color{blue} 821}, {\color{red} 823}, {\color{ufogreen} 827}, {\color{blue} 829}, {\color{blue} 839}, {\color{ufogreen} 853}, {\color{red} 857}, {\color{blue} 859}, {\color{red} 863}, 877, {\color{blue} 881}, {\color{ufogreen} 883}, {\color{red} 887}, 907, {\color{blue} 911}, {\color{blue} 919}, {\color{blue} 929}, {\color{red} 937}, {\color{blue} 941}, {\color{ufogreen} 947}, {\color{red} 953}, {\color{red} 967}, {\color{blue} 971}, {\color{red} 977}, {\color{red} 983}, {\color{blue} 991}, 997.

\section{Calculs de $K_{n}(\overline{lm+2})$ et $K_{n}(\overline{lm-2})$ pour les petites valeurs de $n$}
\label{B}

Soient $k$ et $m$ deux entiers naturels supérieurs à 2 et $N=km$. On suppose que $m \equiv \pm l [k]$ et que $l^{2} \equiv 1 [k]$.
\\
\\On suppose que $m \equiv -l [k]$. On calcule les $K_{n}(\overline{lm+2})$.

\begin{multicols}{2}

\center{ \noindent $K_{0}(\overline{lm+2})=\overline{1}$,
\\$K_{1}(\overline{lm+2})=\overline{lm+2}$,
\\$K_{2}(\overline{lm+2})=\overline{3lm+3}$,
\\$K_{3}(\overline{lm+2})=\overline{5lm+4}$,
\\$K_{4}(\overline{lm+2})=\overline{6lm+5}$,
\\$K_{5}(\overline{lm+2})=\overline{6lm+6}$,
\\$K_{6}(\overline{lm+2})=\overline{6lm+7}$,
\\$K_{7}(\overline{lm+2})=\overline{7lm+8}$,
\\$K_{8}(\overline{lm+2})=\overline{9lm+9}$,
\\$K_{9}(\overline{lm+2})=\overline{11lm+10}$,
\\$K_{10}(\overline{lm+2})=\overline{12lm+11}$,
\\$K_{11}(\overline{lm+2})=\overline{12lm+12}$,}

\columnbreak

\center{ \noindent $K_{12}(\overline{lm+2})=\overline{12lm+13}$,
\\$K_{13}(\overline{lm+2})=\overline{13lm+14}$,
\\$K_{14}(\overline{lm+2})=\overline{15lm+15}$,
\\$K_{15}(\overline{lm+2})=\overline{17lm+16}$,
\\$K_{16}(\overline{lm+2})=\overline{18lm+17}$,
\\$K_{17}(\overline{lm+2})=\overline{18lm+18}$,
\\$K_{18}(\overline{lm+2})=\overline{18lm+19}$,
\\$K_{19}(\overline{lm+2})=\overline{19lm+20}$,
\\$K_{20}(\overline{lm+2})=\overline{21lm+21}$,
\\$K_{21}(\overline{lm+2})=\overline{23lm+22}$,
\\$K_{22}(\overline{lm+2})=\overline{24lm+23}$,
\\$K_{23}(\overline{lm+2})=\overline{24lm+24}$.}

\end{multicols}

On suppose que $m \equiv l [k]$. On calcule les $K_{n}(\overline{lm-2})$.

\begin{multicols}{2}

\center{ \noindent $K_{0}(\overline{lm-2})=\overline{1}$,
\\$K_{1}(\overline{lm-2})=\overline{lm-2}$,
\\$K_{2}(\overline{lm-2})=\overline{-3lm+3}$,
\\$K_{3}(\overline{lm-2})=\overline{5lm-4}$,
\\$K_{4}(\overline{lm-2})=\overline{-6lm+5}$,
\\$K_{5}(\overline{lm-2})=\overline{6lm-6}$,
\\$K_{6}(\overline{lm-2})=\overline{-6lm+7}$,
\\$K_{7}(\overline{lm-2})=\overline{7lm-8}$,
\\$K_{8}(\overline{lm-2})=\overline{-9lm+9}$,
\\$K_{9}(\overline{lm-2})=\overline{11lm-10}$,
\\$K_{10}(\overline{lm-2})=\overline{-12lm+11}$,
\\$K_{11}(\overline{lm-2})=\overline{12lm-12}$,}

\columnbreak

\center{ \noindent $K_{12}(\overline{lm-2})=\overline{-12lm+13}$,
\\$K_{13}(\overline{lm-2})=\overline{13lm-14}$,
\\$K_{14}(\overline{lm-2})=\overline{-15lm+15}$,
\\$K_{15}(\overline{lm-2})=\overline{17lm-16}$,
\\$K_{16}(\overline{lm-2})=\overline{-18lm+17}$,
\\$K_{17}(\overline{lm-2})=\overline{18lm-18}$,
\\$K_{18}(\overline{lm-2})=\overline{-18lm+19}$,
\\$K_{19}(\overline{lm-2})=\overline{19lm-20}$,
\\$K_{20}(\overline{lm-2})=\overline{-21lm+21}$,
\\$K_{21}(\overline{lm-2})=\overline{23lm-22}$,
\\$K_{22}(\overline{lm-2})=\overline{-24lm+23}$,
\\$K_{23}(\overline{lm-2})=\overline{24lm-24}$.}

\end{multicols}

\end{document}